\renewcommand{\d}{\mathrm{d}}
\newcommand{\D}{\mathrm{D}}
\newcommand{\e}{\mathrm{e}}
\newtheorem{Thm}{Theorem}[section]
\newtheorem{Lem}[Thm]{Lemma}
\newtheorem{Prop}[Thm]{Proposition}
\newtheorem{Cor}[Thm]{Corollary}
\newtheorem{Rem}[Thm]{Remark}
\newtheorem{Def}[Thm]{Definition}
\newtheorem{Con}[Thm]{Conjecture}
\newtheorem{Ex}[Thm]{Example}
\newtheorem*{MainThm}{Main Theorem}
\newtheoremstyle{named}{}{}{\itshape}{}{\bfseries}{.}{.5em}{#1 #3}
\theoremstyle{named}
\def\R{\mathbb{R}}
\def\Q{\mathbb{Q}}
\def\C{\mathbb{C}}
\def\Z{\mathbb{Z}}
\def\fG{\mathfrak{G}}
\def\fb{\mathfrak{b}}
\def\g{\mathfrak{g}}
\def\sl{\mathfrak{sl}}
\def\cA{\mathcal{A}}
\def\cB{\mathcal{B}}
\def\cF{\mathcal{F}}
\def\cH{\mathcal{H}}
\def\cN{\mathcal{N}}
\def\cP{\mathcal{P}}
\def\cT{\mathcal{T}}
\def\cU{\mathcal{U}}
\def\a{\alpha}
\def\G{\Gamma}
\def\D{\Delta}
\def\d{\delta}
\def\e{\epsilon}
\def\ze{\zeta}
\def\l{\lambda}
\def\L{\Lambda}
\def\s{\sigma}
\def\t{\tau}
\def\u{\upsilon}
\def\W{\Omega}
\def\w{\omega}
\def\bA{\textbf{A}}
\def\bb{\textbf{b}}
\def\be{\textbf{e}}
\def\bf{\textbf{f}}
\def\bH{\textbf{H}}
\def\bi{\textbf{i}}
\def\bo{\textbf{o}}
\def\bT{\textbf{T}}
\def\bU{\textbf{U}}
\def\=>{\Longrightarrow}
\def\inj{\hookrightarrow}
\def\to{\longrightarrow}
\def\ox{\otimes}
\def\o+{\oplus}
\def\bo+{\bigoplus}
\def\<{\langle}
\def\>{\rangle}
\def\({\left(}
\def\){\right)}
\def\oo{\infty}
\def\^{\wedge}
\def\+{\dagger}
\def\inv{^{-1}}
\def\half{\frac{1}{2}}
\def\dd[#1,#2]{\frac{d#1}{d#2}}
\def\del[#1,#2]{\frac{\partial #1}{\partial #2}}
\def\over[#1]{\overline{#1}}
\def\vec[#1]{\overrightarrow{#1}}
\def\tab{\;\;\;\;\;\;}
\newcommand{\til}[1]{\widetilde{#1}}
\newcommand{\what}[1]{\widehat{#1}}
\newcommand{\veca}[2][cccccccccccccccccccccccccccccccccccccccccc]{\left(\begin{array}{#1}#2 \\ \end{array} \right)}
\newcommand{\Eq}[1]{\begin{align}#1\end{align}}
\newcommand{\Eqn}[1]{\begin{align*}#1\end{align*}}
\begin{document}
\title{On tensor products of positive representations of split real quantum Borel subalgebra $\cU_{q\til{q}}(\fb_\R)$}

\author{  Ivan C.H. Ip\footnote{
         	   Center for the Promotion of Interdisciplinary Education and Research/\newline
     	  Department of Mathematics, Graduate School of Science, Kyoto University, Japan
		\newline
		Email: ivan.ip@math.kyoto-u.ac.jp
          }
}

\date{\today}

\numberwithin{equation}{section}

\maketitle

\begin{abstract}
We study the positive representations $\cP_\l$ of split real quantum groups $\cU_{q\til{q}}(\g_\R)$ restricted to the Borel subalgebra $\cU_{q\til{q}}(\fb_\R)$. We prove that the restriction is independent of the parameter $\l$. Furthermore, we prove that it can be constructed from the GNS-representation of the multiplier Hopf algebra $\cU_{q\til{q}}^{C^*}(\fb_\R)$ defined earlier, which allows us to decompose their tensor product using the theory of the ``multiplicative unitary''. In particular, the quantum mutation operator can be constructed from the multiplicity module, which will be an essential ingredient in the construction of quantum higher Teichm\"{u}ller theory from the perspective of representation theory, generalizing earlier work by Frenkel-Kim. 
\end{abstract}

{\small {\textbf{Keywords.} Positive representations, split real quantum groups, modular double, GNS-representation, higher Teichm\"{u}ller theory}, quantum dilogarithm}

\vspace{5mm}

{\small {\textbf {2010 Mathematics Subject Classification.} Primary 81R50, 22D25 }}

\newpage

\section{Introduction}\label{sec:intro}

To any finite dimensional complex simple Lie algebra $\g$, Drinfeld \cite{D} and Jimbo \cite{J} defined a remarkable Hopf algebra $\cU_q(\g)$ known as the quantum group. Its representation theory has evolved into an important area with many applications in various fields of mathematics and physics. (cf. \cite{EGGHS} and reference therein.)

In classical Lie theory, one is interested in certain real subalgebras of $\g$ known as \emph{real forms}, two important cases being $\g_c$ corresponding to compact groups (e.g. $SU(n)$), and $\g_\R$ corresponding to split real groups (e.g. $SL(n,\R)$). The finite-dimensional representation theory in the compact case is well-behaved, and it is generalized nicely to the corresponding quantum group $\cU_q(\g_c)$. On the contrary, representation theory in the split-real case is much more complicated as was shown by the monumental works of Harish-Chandra. Its generalization to the quantum group level - involving self-adjoint operators on Hilbert spaces - is physically more relevant, but is still largely open due to various analytic difficulties coming from non-compactness and the use of unbounded operators.

\subsection{Positive representations}
The notion of the \emph{positive principal series representations}, or simply \emph{positive representations}, was introduced in \cite{FI} as a new research program devoted to the representation theory of split real quantum groups $\cU_{q\til{q}}(\g_\R)$. It uses the concept of modular double for quantum groups \cite{Fa1, Fa2}, and generalizes the case of $\cU_{q\til{q}}(\sl(2,\R))$ studied extensively by Teschner \textit{et al.} \cite{BT, PT1, PT2}. Explicit construction of the positive representations $\cP_\l$ of  $\cU_{q\til{q}}(\g_\R)$ associated to a simple Lie algebra $\g$ has been obtained for the simply-laced case in \cite{Ip2} and non-simply-laced case in \cite{Ip3}, where the generators of the quantum groups are realized by positive essentially self-adjoint operators. Furthermore, since the generators are represented by positive operators, we can take real powers by means of functional calculus, and we obtain the so-called \emph{transcendental relations} of the (rescaled) generators:
\Eq{\label{trans}\til{\be_i}=\be_i^{\frac{1}{b_i^2}},\tab \til{\bf_i}=\bf_i^{\frac{1}{b_i^2}}, \tab \til{K_i}=K_i^{\frac{1}{b_i^2}}.}
These important relations give the self-duality between different parts of the modular double, representing them simultaneously on the same Hilbert space. Moreover, in the non-simply-laced case, these provide new explicit analytic relations between the quantum group and its Langlands dual \cite{Ip3}.

In the case of the modular double $\cU_{q\til{q}}(\sl(2,\R))$, the positive representations are shown to be closely related to the space of conformal blocks of Liouville theory, and there are direct relations among the two and quantum Teichm\"{u}ller theory described below, related by the fusion and braiding operations \cite{NT, T}. In particular, the family of positive representations $\cP_\l$ of $\cU_{q\til{q}}(\sl(2,\R))$ is closed under taking tensor product, with the Plancherel measure $d\mu(\l)$ given by the quantum dilogarithm \cite{PT2}:
\Eq{\cP_{\l_1}\ox \cP_{\l_2}\simeq \int_{\R_+} \cP_\l d\mu(\l).}
The fusion relations of Liouville theory are then provided precisely by the decomposition of the triple tensor products \cite{PT1}. Together with the existence of a universal $R$-operator (constructed for general $\cU_{q\til{q}}(\g_\R)$ in \cite{Ip4}) giving the braiding structure, the braided tensor category structure may give rise to a new class of topological quantum field theory (TQFT) in the sense of Reshetikhin-Turaev \cite{RT1, RT2, Wi}. Therefore one of the major remaining unsolved problem in the theory of positive representations for higher rank is the structure under taking tensor products, and we expect that the decomposition is related to the corresponding fusion relations of more general non-compact CFT's such as the Toda conformal field theory \cite{FL, Wy}.

\subsection{Quantum Teichm\"uller theory}
The \emph{Teichm\"uller space} $\cT_S$ of an oriented surface $S$ is the space of all complex structures on $S$ modulo diffeomorphisms isotopic to identity. It is a very important space closely related to the moduli space of Riemann surfaces, and carries a natural action of the \emph{mapping class group} $\G_S$, i.e., the group of all orientation-preserving diffeomorphisms modulo isotopy, that preserves a canonical Poisson structure called the \emph{Weil-Petersson form} \cite{A}.

Hence, quantum Teichm\"uller theory is roughly speaking the quantization $\cT_S^q$ of the Poisson manifold $\cT_S$, such that the non-commutative algebra of function is represented on some Hilbert space of states $\cH$, and automorphisms of $\cT_S^q$ associated to $g\in \G_S$ is represented by unitary operators $\rho(g)$ on $\cH$. Thus, the main goal is to construct new
projective unitary representations of the mapping class group $\G_S$.

In \cite{FK}, Frenkel-Kim constructed the quantum Teichm\"{u}ller space from the perspective of representation theory of certain Hopf algebra, namely, the modular double of the quantum plane. More precisely, let $0<b<1, b\in \R\setminus\Q$ and 
\Eq{q=e^{\pi ib^2}, \tab \til{q}=e^{\pi ib^{-2}}.}
Then the modular double of the quantum plane $\cB_{q\til{q}}$ is a non-compact version of the quantum torus generated by two sets of commuting generators $\{E,K\}$ and $\{\til{E}, \til{K}\}$ such that
\Eq{KE=q^2 EK, \tab \til{K}\til{E}=\til{q}^2\til{E}\til{K}.}
The quantum plane has a canonical representation on $\cH\simeq L^2(\R)$ such that the generators $E,K$ are represented by positive self-adjoint operators
\Eq{E=e^{-2\pi bp},\tab K=e^{-2\pi bx}, \tab \til{E}=e^{-2\pi b\inv p},\tab \til{K}=e^{-2\pi b\inv x},}
with $p=\frac{1}{2\pi i}\del[,x]$. Using this fact, Frenkel-Kim \cite{FK} showed that $\cH$ is closed under taking the tensor product and decomposes as
\Eq{\cH\ox \cH \simeq M\ox \cH,}
where $M\simeq Hom_{B_{q\til{q}}}(\cH, \cH\ox \cH)$ is the multiplicity module with $\cB_{q\til{q}}$ acting trivially.

Upon identification of $\cH$ and $M$ with $L^2(\R)$, this can be expressed by a transformation using the quantum dilogarithm (cf. \eqref{FKgb}). Then the canonical isomorphism
\Eq{(\cH_1\ox \cH_2)\ox \cH_3\simeq \cH_1\ox (\cH_2\ox \cH_3)}
yields an operator $\bT$ on the multiplicity modules of the corresponding tensor product decomposition, called the \emph{quantum mutation operator}, which by construction satisfies the pentagon relation \cite[Proposition 4.8]{FK}:
\Eq{\bT_{23}\bT_{12}=\bT_{12}\bT_{13}\bT_{23},}
where the standard legged notation has been used. Together with another operator $\bA$ where $\bA^3=1$ coming from the identification of the dual representations from the antipode $S$ of $\cB_{q\til{q}}$, one recovers Kashaev's projective representation of the mapping class groupoid $\fG$ \cite{Ka2} which descends to a projective representation of $\G_S$, and we can apply it to quantization of Teichm\"{u}ller space for various surfaces as shown e.g. in \cite{CF, Fo, Ka2}. The operators $\bT$ and $\bA$ correspond essentially to the fusion and braiding relations on the conformal field theory side.

\subsection{Main results}

We observe that the quantum plane is just the Borel subalgebra of $\cU_{q\til{q}}(\sl(2,\R))$, and the canonical representation on $\cH$ is equivalent to the restriction of the positive representation. Therefore in order to generalize to higher rank, a natural candidate is to consider the Borel subalgebra $\cU_{q\til{q}}(\fb_\R)$ of the modular double of more general split real quantum groups $\cU_{q\til{q}}(\g_\R)$. It follows that one of the main ingredient needed to construct a version of quantum higher Teichm\"{u}ller theory \cite{FG1, FG2} is the construction of the quantum mutation operator $\bT$ described above, coming from the decomposition of tensor products of positive representations of the Borel part. 

In this paper, we prove the following theorem (Theorem \ref{removeL}, Corollary \ref{maincor})
\begin{MainThm} The positive representations $\cP_\l$ of $\cU_{q\til{q}}(\g_\R)$ restricted to the Borel part $\cU_{q\til{q}}(\fb_\R)$ is independent of the parameters $\l$. Moreover, the tensor product of $\cP_\l\simeq \cP$ decomposes as
\Eq{\cP\ox \cP\simeq M\ox \cP,}
where $M\simeq L^2(\R^N)$ is the multiplicity module with $\cU_{q\til{q}}(\fb_\R)$ acting trivially. Here $N$ is the dimension of $\fb_\R$.
\end{MainThm}

In particular, this decomposition provides strong evidence for the closure of positive representation of the full quantum group $\cU_{q\til{q}}(\g_\R)$ under taking tensor product. Using $\cP_\l\simeq \cP$, we also generalize in Conjecture \ref{SvNThm} the Stone-von Neumann's Theorem to the case of Borel subalgebra of higher rank split real quantum groups.

To prove the Main Theorem, we give a new construction of positive representations of the Borel part by means of \emph{multiplier Hopf algebras} introduced in \cite{KV1, VD}. We use the $C^*$-algebraic version of the Borel part $\cA:=\cU_{q\til{q}}^{C^*}(\fb_\R)$ constructed in \cite{Ip4, Ip5}, and apply the theory of Gelfand-Naimark-Segal (GNS) representations of $C^*$-algebras. This generalizes the harmonic analysis of the quantum plane studied in \cite{Ip1}, which was introduced for the harmonic analysis of the quantum space of functions $L^2(SL_{q\til{q}}^+(2,\R))$. A useful consequence of the GNS construction is the existence of a unitary operator $W$, known as the \emph{multiplicative unitary} \cite{BS}, which gives the desired intertwiner
\Eq{W\D(x) W^* = 1\ox x,\tab x\in \cA}
between the tensor product and its decomposition with a trivial multiplicity module. Therefore in order to apply this to the positive representations $\cP_\l\simeq \cP$ restricted to the Borel part, it suffices to show that $\cP$ is equivalent to the GNS representation. The second main result of this paper is then the following (Theorem \ref{mainthm}):
\begin{MainThm} For any simple Lie algebra $\g$, the GNS representation $\cP_{GNS}$ constructed by left multiplication on $\cA=\cU_{q\til{q}}^{C^*}(\fb_\R)$ is unitary equivalent to the positive representation $\cP_\l\simeq \cP$ of $\cU_{q\til{q}}(\g_\R)$ restricted to its Borel part.
\end{MainThm}

Finally, we provide concrete examples for type $A_1$ and $A_2$, where the unitary transformations by the remarkable quantum dilogarithm functions also provide rich combinatorial insight into the tensor category structure of the positive representations. In particular, these equip us with explicit expressions for the quantum mutation operators $\bT$ needed in order to construct candidates for the quantum higher Teichm\"{u}ller theory. Together with the (unitary) antipode $R_S$ defined in \cite{Ip1} and the identification of the dual representations \cite{FK}, this give us the desired construction of the projective representation of Kashaev's mapping class groupoid $\fG$ which acts on a Hilbert space $L^2(\R^N)$ with higher functional dimension. This is expected to be a new class of representations for $\fG$ and details will appear elsewhere.

The paper is organized as follows. In Section \ref{sec:prelim} we fix the notations and recall some background preliminaries needed in the calculation of this paper. In Section \ref{sec:borel}, we consider the restriction of the positive representations $\cP_\l$ to the Borel subalgebra, and show that the representation is independent of the parameters $\l$. In Section \ref{sec:GNS}, we consider the GNS-representation obtained from the action by left multiplication on the $C^*$-algebraic version of the Borel subalgebra, and bring the expression into a canonical form. In Section \ref{sec:equirep}, we prove the Main Theorem by showing that the representation $\cP_{GNS}$ constructed from the GNS representation is unitary equivalent to the one constructed earlier in \cite{Ip2, Ip3}. Finally in Section \ref{sec:Ex}, we construct explicitly the tensor product decompositions for type $A_1$ and $A_2$.

\section*{Acknowledgments}
I would like to thank Hyun Kyu Kim for helpful discussions and the editors for valuable comments. This work is supported by World Premier International Research Center Initiative (WPI Initiative), MEXT, Japan while the author resided at IPMU, currently by Top Global University Project, MEXT, Japan at Kyoto University, and JSPS KAKENHI Grant Numbers JP26800004, JP16K17571.

\section{Preliminaries}\label{sec:prelim}
Throughout the paper, we will fix once and for all $q=e^{\pi \bi b^2}$ with $\bi =\sqrt{-1}$, \\$0<b^2<1$ and $b^2\in\R\setminus\Q$. We also denote by $Q=b+b\inv$.
\subsection{Definition of the modular double $\cU_{q\til{q}}(\g_\R)$}\label{sec:prelim:Uqgr}
We follow the convention used in \cite{Ip4}. Let $\g$ be a simple Lie algebra over $\C$, $I=\{1,2,...,n\}$ denotes the set of nodes of the Dynkin diagram of $\g$ where $n=rank(\g)$. Let $W$ denote the Weyl group, and $w_0\in W$ its longest element with length $l(w_0)$.

Let $q_i:=q$ if $i\in I$ corresponds to long root, and $q_i := e^{\pi i b_s^2}$ if $i\in I$ corresponds to short root, where $b_s := \frac{b}{\sqrt{2}}$ if $\g$ is doubly-laced, and $b_s:=\frac{b}{\sqrt{3}}$ if $\g$ is of type $G_2$. We assume the roots are long for simply-laced $\g$, and denote by $b_l:=b$ as well. We let $1\in I$ corresponds to short root in type $B_n$ and long root in other types.

\begin{Def}\cite{D,J} 
The Drinfeld-Jimbo quantum group $\cU_q(\g_\R)$ is the algebra generated by $\{E_i,F_i,K_i^{\pm1}\}_{i\in I}$ over $\C$ subjected to the relations for $i,j\in I$:
\Eq{
K_iE_j=q_i^{a_{ij}}E_jK_i,\tab K_iF_j=q_i^{-a_{ij}}F_jK_i,\tab
{[E_i,F_j]} = \d_{ij}\frac{K_i-K_i\inv}{q_i-q_i\inv},
}
together with the Serre relations for $i\neq j$:
\Eq{
\sum_{k=0}^{1-a_{ij}}(-1)^k\frac{[1-a_{ij}]_{q_i}!}{[1-a_{ij}-k]_{q_i}![k]_{q_i}!}X_i^{k}X_jX_i^{1-a_{ij}-k}&=0,\tab X=E,F\label{SerreE}
}
where $[k]_q:=\frac{q^k-q^{-k}}{q-q\inv}$ and $(a_{ij})$ denotes the Cartan matrix. 
\end{Def}

We choose the Hopf algebra structure of $\cU_q(\g)$ to be given by (we will not need the counit and antipode in this paper):
\Eq{
\D(E_i)=&1\ox E_i+E_i\ox K_i,\\
\D(F_i)=&K_i^{-1}\ox F_i+F_i\ox 1,\\
\D(K_i)=&K_i\ox K_i.
}
We define $\cU_q(\g_\R)$ to be the real form of $\cU_q(\g)$ induced by the star structure
\Eq{E_i^*=E_i,\tab F_i^*=F_i,\tab K_i^*=K_i.}
Finally, from the results of \cite{Ip2,Ip3}, let
\Eq{
\til{q}:=e^{\pi \bi b_s^{-2}}
} and we define the modular double $\cU_{q\til{q}}(\g_\R)$ to be
\Eq{\cU_{q\til{q}}(\g_\R):=\cU_q(\g_\R)\ox \cU_{\til{q}}(\g_\R)&\tab \mbox{$\g$ is simply-laced,}\\
\cU_{q\til{q}}(\g_\R):=\cU_q(\g_\R)\ox \cU_{\til{q}}({}^L\g_\R)&\tab \mbox{otherwise,}}
where ${}^L\g_\R$ is the Langlands dual of $\g_\R$ obtained by interchanging the long roots and short roots of $\g_\R$.

\subsection{Positive representations of $\cU_{q\til{q}}(\g_\R)$}\label{sec:prelim:pos}
In \cite{FI, Ip2,Ip3}, a special class of representations for $\cU_{q\til{q}}(\g_\R)$, called the positive representations, is defined. The generators of $\cU_{q\til{q}}(\g_\R)$ are realized by positive essentially self-adjoint operators. They also satisfy the \emph{transcendental relations} \eqref{transdef} below, in particular the quantum group and its modular double counterpart are represented on the same Hilbert space, generalizing the situation of $\cU_{q\til{q}}(\sl(2,\R))$ introduced in \cite{Fa1, Fa2} and studied in \cite{PT2}. More precisely, 
\begin{Thm}\cite{FI, Ip2, Ip3} Let the rescaled generators be
\Eq{\be_i:=2\sin(\pi b_i^2)E_i,\tab \bf_i:=2\sin(\pi b_i^2)F_i.\label{smallef}}
There exists a family of representations $\cP_{\l}$ of $\cU_{q\til{q}}(\g_\R)$ parametrized by the $\R_+$-span of the cone of positive weights $\l\in P_\R^+$, or equivalently by $\l\in \R_+^{rank(\g)}$, such that 
\begin{itemize}
\item The generators $\be_i,\bf_i,K_i$ are represented by positive essentially self-adjoint operators acting on $L^2(\R^{N})$ where $N=l(w_0)$.
\item Define the transcendental generators:
\Eq{\til{\be_i}:=\be_i^{\frac{1}{b_i^2}},\tab \til{\bf_i}:=\bf_i^{\frac{1}{b_i^2}},\tab \til{K_i}:=K_i^{\frac{1}{b_i^2}}.\label{transdef}}
\begin{itemize}
\item if $\g$ is simply-laced, the generators $\til{\be_i},\til{\bf_i},\til{K_i}$ are obtained by replacing $b$ with $b\inv$ in the representations of the generators $\be_i,\bf_i,K_i$. 
\item If $\g$ is non-simply-laced, then the generators $\til{E_i},\til{F_i},\til{K_i}$ with $\til{\be_i}:=2\sin(\pi b_i^{-2})\til{E_i}$ and $\til{\bf_i}:=2\sin(\pi b_i^{-2})\til{F_i}$ generate $\cU_{\til{q}}({}^L\g_\R)$.
\end{itemize}
\item The generators $\be_i,\bf_i,K_i$ and $\til{\be_i},\til{\bf_i},\til{K_i}$ commute weakly up to a sign.
\end{itemize}
\end{Thm}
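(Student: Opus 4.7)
My plan is to prove the theorem by giving an explicit construction of $\cP_\l$ on $L^2(\R^N)$, where $N=l(w_0)$, using a fixed reduced expression $w_0=s_{i_1}s_{i_2}\cdots s_{i_N}$ for the longest element of the Weyl group. To each index $k=1,\ldots,N$ I would attach a canonical pair $(x_k,p_k)$ with $[p_k,x_k]=\frac{1}{2\pi\bi}$ acting on the $k$-th factor of $L^2(\R^N)$, so that $e^{2\pi b u_k}$ and $e^{2\pi b v_k}$ for real linear forms $u_k,v_k$ in $x_k,p_k$ are positive essentially self-adjoint operators whose Weyl commutation relations are governed by the symplectic form on the $u,v$'s. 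This framework is precisely the one that Feigin observed is forced by the geometry of the positive unipotent subgroup, and in which all calculations reduce to Baker--Campbell--Hausdorff identities of the form $e^{A}e^{B}=q^{2c}e^{B}e^{A}$ whenever $[A,B]=2\pi\bi\,c$ is central.

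The Cartan part is the easy part: I would set $K_i=\exp\bigl(\pi b_i\sum_k c_{ik}x_k+\pi b_i\l_i\bigr)$ for a matrix of integer coefficients $c_{ik}$ determined by the reduced word, which forces the relations $K_iK_j=K_jK_i$ and the pairing with the $E$'s and $F$'s as soon as one correctly chooses the coefficients. For the rescaled root generators the key step is to write
\Eqn{\be_i=\sum_{k:\,i_k=i}e^{2\pi b_i A_k}}
where each $A_k$ is an explicit affine combination of $x_j,p_j$ constructed inductively by conjugating the rank-one building block $e^{-2\pi b_i p_k}$ by the operators associated to the prefix $s_{i_1}\cdots s_{i_{k-1}}$. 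Because each summand is a product of (commuting or $q$-commuting) positive operators, positivity of $\be_i$ follows from the standard fact that a sum of positive operators whose pairwise products satisfy $AB=q^{2}BA$ with $|q|=1$ and whose sum is essentially self-adjoint on an obvious core (Schwartz functions) is again positive essentially self-adjoint; the analogous formula defines $\bf_i$ by swapping the roles of $w_0$-prefixes and suffixes. The $\be_i$--$\bf_i$ commutation relation $[E_i,F_j]=\d_{ij}\frac{K_i-K_i^{-1}}{q_i-q_i^{-1}}$ is then a term-by-term computation in which almost all cross-terms cancel by Weyl commutation and only the diagonal contribution survives, while the quantum Serre relations reduce, after collecting commuting exponentials, to the classical identity
\Eqn{\sum_{k=0}^{1-a_{ij}}(-1)^k\binom{1-a_{ij}}{k}_{q_i}q_i^{k(1-a_{ij}-k)}=0,}
which holds by the $q$-binomial theorem.

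The transcendental relations \eqref{transdef} are obtained by functional calculus: since each $\be_i$ is a \emph{positive} self-adjoint operator, its real power $\be_i^{1/b_i^2}$ is well defined, and because each summand $e^{2\pi b_iA_k}$ has positive-operator power $e^{2\pi b_i^{-1}A_k}$, the formula for $\til{\be_i}$ is obtained by replacing $b$ with $b^{-1}$ in the simply-laced case. The non-simply-laced case requires the subtler verification, carried out in \cite{Ip3}, that the resulting operators no longer generate $\cU_{\til q}(\g_\R)$ but rather the Langlands dual $\cU_{\til q}({}^L\g_\R)$; this is checked by tracking how $b_l$ and $b_s$ are interchanged under $b\mapsto b^{-1}$ when forming $\til q_i$. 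Weak commutativity up to a sign between the $b$-part and the $b^{-1}$-part follows from the identity $e^{2\pi bA}e^{2\pi b^{-1}B}=e^{-2\pi\bi\,c}e^{2\pi b^{-1}B}e^{2\pi bA}$ whenever $[A,B]=\bi c$, which on the integer lattice generated by the $c_{ik}$'s produces precisely a sign $\pm 1$.

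The main obstacle is the explicit choice of the coefficients $c_{ik}$ defining the operators $A_k$. Naively one might try to use the cluster coordinates on the unipotent subgroup from Lusztig's parametrization, but ensuring simultaneously (i) positivity of each summand, (ii) the correct $[\be_i,\bf_j]$ relation, and (iii) the Serre relations forces a very specific form that must be built inductively along the reduced word, and verified to be independent (up to unitary equivalence) of the choice of reduced expression via the Coxeter move computations. In the non-simply-laced case the hardest part is the higher-order Serre relation for adjacent short--long pairs, where the cancellation of cross-terms requires the rank-two verification that occupies the bulk of \cite{Ip3}.
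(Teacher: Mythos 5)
This theorem is quoted in the paper as background from \cite{FI,Ip2,Ip3}; the paper itself offers no proof, only the explicit formulas \eqref{FF}--\eqref{EE} of Theorem \ref{FKaction} and the rank-one and rank-two examples. Your outline does follow the strategy of those references --- operators on $L^2(\R^{l(w_0)})$ attached to a reduced word for $w_0$, built inductively from the rank-one block $[u]e(-2p)$ and checked against Coxeter moves --- so the overall route is the intended one.

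There is, however, one step that fails as written. You deduce the transcendental relations \eqref{transdef} by observing that each summand $e^{2\pi b_i A_k}$ has an obvious $1/b_i^2$-th power and concluding that $\til{\be_i}$ is obtained by replacing $b$ by $b^{-1}$. For a sum of positive self-adjoint operators this is false in general: $(U+V)^{1/b^2}\neq U^{1/b^2}+V^{1/b^2}$. The correct statement is Lemma \ref{qbi} (cf.\ \cite{Ip3,Vo}): when $UV=q^2VU$ for positive self-adjoint $U,V$, the sum is essentially self-adjoint and \eqref{qbiEq} holds; this is proved via the quantum dilogarithm identities $g_b(U)g_b(V)=g_b(U+V)$ and $g_b(X)=g_{b^{-1}}(X^{1/b^2})$, and it is the entire content of the self-duality assertion --- without it the second bullet of the theorem is unproved. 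Two related inaccuracies: the actual generators \eqref{FF} are sums of quantum brackets $[X]e(-2p_Y)$, so each occurrence of the root $i$ in $w_0$ contributes \emph{two} exponentials, and the full collection of summands does not pairwise $q^2$-commute; both essential self-adjointness and the $b\mapsto b^{-1}$ replacement must be organized by grouping the terms into $q^2$-commuting pairs (which is exactly what the bracket notation encodes). Likewise the Serre relations do not collapse to a single scalar $q$-binomial identity once a root occurs more than once in $w_0$; the cancellation of cross terms between different occurrences is the rank-two computation that constitutes the bulk of \cite{Ip2,Ip3}, not a routine consequence of the $q$-binomial theorem.
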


The positive representations are constructed for each reduced expression of the longest element $w_0\in W$ of the Weyl group, and representations corresponding to different reduced expressions are unitary equivalent.
\begin{Def}\label{variables} Fix a reduced expression of $w_0=s_{i_1}...s_{i_N}$. Let the coordinates of $L^2(\R^N)$ be denoted by $\{u_i^k\}$ so that $i$ is the corresponding root index, and $k$ denotes the sequence this root is appearing in $w_0$ from the right. Also denote by $\{v_j\}_{j=1}^N$ the same set of coordinates counting from the left, $v(i,k)$ the index such that $u_i^k=v_{v(i,k)}$, and $r(k)$ the root index corresponding to $v_k$.
\end{Def}
\begin{Ex} The coordinates of $L^2(\R^6)$ for $A_3$ corresponding to $w_0=s_3s_2s_1s_3s_2s_3$ is given by
$(u_3^3,u_2^2,u_1^1,u_3^2, u_2^1,u_3^1)=(v_1,v_2,v_3,v_4,v_5,v_6).$
\end{Ex} 
\begin{Def}We denote by $p_u=\frac{1}{2\pi \bi}\del[,u]$ and 
\Eq{e(u)&:=e^{\pi bu},\tab [u]:=q^\half e(u)+q^{-\half}e(-u),}
then the following is positive whenever $[p,u]=\frac{1}{2\pi \bi}$:
\Eq{\label{standardform}[u]e(-2p):=(q^\half e^{\pi bu}+q^{-\half}e^{-\pi bu})e^{-2\pi bp} = e^{\pi b(u-2p)}+e^{\pi b(-u-2p)}}
\end{Def}
Note that we changed the notation slightly for $e(u)$ in this paper from previous references \cite{Ip2}-\cite{Ip4} for later convenience.

\begin{Def}\label{usul}By abuse of notation, we denote by
\Eq{[u_s+u_l]e(-2p_s-2p_l):=e^{\pi b_s(-u_s-2p_s)+\pi b_l(-u_l-2p_l)}+e^{\pi b_s(u_s-2p_s)+\pi b_l(u_l-2p_l)},}
where $u_s$ (resp. $u_l$) is a linear combination of the variables corresponding to short roots (resp. long roots). The parameters $\l_i$ are also considered in both cases. Similarly $p_s$ (resp. $p_l$) are linear combinations of the $p$ shifting of the short roots (resp. long roots) variables.

We will occasionally write in the form $[X]e(-2p_Y)$, where by abuse of notation, if $Y = \sum a_i u_i, a_i\in \Z$, then $p_Y = \sum a_i p_{u_i}$.
\end{Def}
\begin{Thm}\label{FKaction}\cite{Ip2,Ip3} For a fixed reduced expression of $w_0$, $\cP_\l$ is given by
\Eq{
\bf_i=&\sum_{k=1}^n\left[-\sum_{j=1}^{v(i,k)-1} a_{i,r(j)}v_j-u_i^k-2\l_i\right]e(2p_{i}^{k}),\label{FF}\\
K_i=&e\left(-\sum_{k=1}^{l(w_0)} a_{i,r(k)}v_k+2\l_i\right).\label{KK}
}
By taking $w_0=w's_i$ so that the simple reflection for root $i$ appears on the right, the action of $\be_i$ is given by
\Eq{\label{EE} 
\be_i=&[u_i^1]e(-2p_i^1).}
\end{Thm}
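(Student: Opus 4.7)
The plan is to verify directly that the stated formulas yield a representation of $\cU_{q\til{q}}(\g_\R)$ by positive essentially self-adjoint operators, and then to show the construction is independent of the chosen reduced expression of $w_0$ up to unitary equivalence. For positivity, each $K_i$ is the exponential of a real linear combination of the multiplication operators $v_k$ and is manifestly positive. When $s_i$ sits rightmost in $w_0$, the operator $\be_i=[u_i^1]e(-2p_i^1)$ is in the standard form \eqref{standardform}, a sum of two positive commuting exponentials of self-adjoint Weyl generators. Each summand of $\bf_i$ is likewise of the form $[X]e(2p_i^k)$ where $X$ is a real linear combination of $v_j$'s all commuting with $p_i^k$ except for the single coordinate $u_i^k$, which appears with a minus sign inside the bracket; a short BCH computation shows this summand is again of standard positive form. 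Essential self-adjointness on a common dense invariant core follows by the standard argument on a suitable Schwartz-type domain.

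Next I would verify the Drinfeld--Jimbo relations. The $K$-relations $K_iE_j=q_i^{a_{ij}}E_jK_i$ and $K_iF_j=q_i^{-a_{ij}}F_jK_i$ follow from the Baker--Campbell--Hausdorff formula in the Heisenberg algebra generated by $\{v_k,p_i^k\}$: conjugation by $K_i$ shifts each $e(\pm 2p_i^k)$ by precisely the required power of $q_i$. For the nontrivial relation $[E_i,F_j]=\d_{ij}\frac{K_i-K_i\inv}{q_i-q_i\inv}$, by Lusztig's isomorphism it suffices to treat reduced expressions where $s_i$ appears rightmost. For $i\neq j$ one checks all contributions cancel termwise; for $i=j$ commuting $\be_i$ against each summand of $\bf_i$ yields a chain whose consecutive terms cancel pairwise, leaving endpoint contributions that recombine into $\frac{K_i-K_i\inv}{q_i-q_i\inv}$. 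The structural constant $-\sum_j a_{i,r(j)}v_j-u_i^k-2\l_i$ inside the square bracket is tailored precisely so this telescoping succeeds. The Serre relations reduce to rank-$2$ subsystems $A_2, B_2, G_2$, since only the $i,j$-indexed variables interact nontrivially, and these cases are verified by explicit quantum-dilogarithm identities as in \cite{Ip2,Ip3}.

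For independence of the reduced expression, any two reduced expressions of $w_0$ are connected by a sequence of braid moves, and each braid move is implemented by conjugation with a unitary built from $g_b$ according to Lusztig's isomorphism recalled in Section \ref{sec:prelim}. Such conjugation intertwines the two realizations of $\cP_\l$ and, when $s_i$ is moved away from the rightmost position, transforms the simple expression $\be_i=[u_i^1]e(-2p_i^1)$ into its general form. The transcendental relations \eqref{transdef} and the modular-double compatibility are automatic, since every operator is built from $e^{\pi b(\cdot)}$-type exponentials whose positive functional-calculus powers are obtained by the substitution $b\mapsto b\inv$ throughout.

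The main obstacle is the telescoping verification of $[E_i,F_i]$, which requires careful bookkeeping of which coordinates $u_j^m$ lie strictly between $u_i^k$ and $u_i^{k+1}$ in the reduced word, together with the identity $a_{ii}=2$ to produce the $K_i-K_i\inv$ cancellation. The non-simply-laced Serre relations are combinatorially delicate for a similar reason, as several summands of $\bf_i$ interact nontrivially through the $q_i$-binomial coefficients and must telescope against one another to vanish.
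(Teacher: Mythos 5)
The first thing to note is that the paper itself contains no proof of this statement: Theorem \ref{FKaction} is imported verbatim from \cite{Ip2,Ip3}, and the surrounding text explicitly defers all details of the construction to those references. So your proposal is not competing against an argument in this paper but against the construction in the cited works, where the representation is built inductively from the rank-one case and the braid-move unitaries, rather than by ab initio verification of the closed formulas. Measured against that, your outline has the right general shape (positivity, algebra relations, independence of the reduced word), but it contains gaps at exactly the points where the real work lies.

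The most serious issue is circularity. You invoke ``Lusztig's isomorphism'' (Theorem \ref{LusThm}) and the braid-move unitaries to reduce the verification of $[E_i,F_j]$ to reduced expressions with $s_i$ rightmost, and again to transport $\be_i=[u_i^1]e(-2p_i^1)$ to a general position. But those unitaries are constructed \emph{on the positive representations}, i.e.\ their existence presupposes that the formulas \eqref{FF}--\eqref{EE} already define a representation; producing them explicitly (as ratios of $g_b$'s implementing each rank-two Coxeter move) is precisely the technical content of \cite{Ip2,Ip3} that your proof would need to supply rather than cite. Second, your description of the $[E_i,F_i]$ computation as a telescoping chain does not match the structure in the rightmost gauge: there $u_i^1=v_N$ is the last variable, no other summand of $\bf_i$ (nor any summand of $\bf_j$, $j\neq i$) involves $v_N$ or $p_N$, so only the $k=1$ term of $\bf_i$ fails to commute with $\be_i$ and the whole relation collapses to the rank-one identity $[u][-u-c]e(2p)e(-2p)-\dots$; there is no pairwise cancellation along a chain to bookkeep. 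The telescoping picture would be relevant only for the general-position formula for $\be_i$, which the theorem does not state and which you would again need the (not yet established) braid unitaries to reach. Finally, the claim that the Serre relations ``reduce to rank-two subsystems since only the $i,j$-indexed variables interact nontrivially'' is asserted, not proved: in a general reduced word the brackets in \eqref{FF} for $\bf_i$ and $\bf_j$ contain variables $v_k$ with $r(k)\notin\{i,j\}$ weighted by Cartan entries, and one must argue that these spectator variables genuinely decouple. Until these three points are filled in, the proposal is an outline of the strategy of \cite{Ip2,Ip3} rather than a proof.
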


In this paper, it is instructive to recall the explicit expression in the case of rank 1 and 2. For details of the construction and the other cases please refer to \cite{Ip2,Ip3}. 

\begin{Prop}\cite{BT,PT2}\label{canonicalsl2} The positive representation $P_\l$ of $\cU_{q\til{q}}(\sl(2,\R))$ acting on $L^2(\R$) is given by 
\Eqn{
\be=&[u-\l]e(-2p)=e^{\pi b(-u+\l-2p)}+e^{\pi b(u-\l-2p)},\\
\bf=&[-u-\l]e(2p)=e^{\pi b(u+\l+2p)}+e^{\pi b(-u-\l+2p)},\\
K=&e(-2u)=e^{-2\pi bu}.
}
Note that it is unitary equivalent to \eqref{FF}-\eqref{EE} by $u\mapsto u+\l$.
\end{Prop}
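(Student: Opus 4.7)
The plan is to verify that the stated operators satisfy the defining relations of $\cU_{q\til{q}}(\sl(2,\R))$, are positive essentially self-adjoint, and realize the transcendental relations of the modular dual. The computational tool is the canonical commutation relation $[p,u]=\tfrac{1}{2\pi\bi}$ on the Schwartz core of $L^2(\R)$, together with the Baker--Campbell--Hausdorff identity $e^{A}e^{B}=e^{A+B}e^{[A,B]/2}$ valid whenever $[A,B]$ is central. Applied to linear combinations of $u$ and $p$ this yields the Weyl form $e^{\alpha u}e^{\beta p}=e^{\alpha\beta\bi/(2\pi)}e^{\beta p}e^{\alpha u}$, which is all one needs to reorder any product of the stated exponentials.

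First I would check the relations $K\be=q^{2}\be K$ and $K\bf=q^{-2}\bf K$ by conjugation: since $K=e^{-2\pi bu}$, conjugating each exponential summand of $\be$ (resp.\ $\bf$) by $K$ shifts the $\mp 2p$ term by a central additive constant, producing the global scalar $e^{2\pi\bi b^{2}}=q^{2}$ (resp.\ $q^{-2}$). The more substantive step is the commutator $[\be,\bf]=(2\sin\pi b^{2})^{2}(K-K^{-1})/(q-q^{-1})$. I would expand $\be\bf$ and $\bf\be$ into four summands each, bring them to a common normal-ordered form via BCH, and observe that two of the four cross-terms in $\be\bf$ match those in $\bf\be$ exactly and so cancel, while the remaining two combine into $\mathrm{const}\cdot (e^{-2\pi bu}-e^{2\pi bu})$; the scalar produced by the reordering accounts for both the factor $(q-q^{-1})^{-1}$ and the $(2\sin\pi b^{2})^{2}$ rescaling coming from the conversion between $\be,\bf$ and $E,F$.

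Positivity is immediate for $K$, and for $\be,\bf$ each operator is a sum $e^{X}+e^{Y}$ of real exponentials of self-adjoint operators whose exponents have central commutator a positive imaginary multiple of the identity (namely $2\pi\bi b^{2}$); by the standard theory of the quantum plane (see e.g.\ \cite{Ip1}) such sums are positive essentially self-adjoint on the Schwartz core. The transcendental relations follow from functional calculus: raising each summand to the power $1/b^{2}$ preserves the sum-of-exponentials form while replacing $b$ by $b^{-1}$ throughout the exponents, so $\til{\be},\til{\bf},\til{K}$ are given by the same formulas with $b\mapsto b^{-1}$, and weak commutativity of the tilded with untilded generators up to a sign reduces to the Weyl relation applied to rescaled exponents, where the residual phase is $e^{2\pi\bi}=1$ up to the expected sign. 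The final assertion of unitary equivalence with \eqref{FF}--\eqref{EE} is verified by the translation $u\mapsto u+\l$, which intertwines $\be$ and $\bf$ with their canonical-form counterparts directly.

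The main obstacle is the $[\be,\bf]$ computation: it is the unique place where the central scalar prefactors produced by reordering $u$ and $p$ must combine correctly to yield simultaneously the required denominator $(q-q^{-1})$, the cancellation of the two unwanted cross-terms, and the correct sign in $K-K^{-1}$. Everything else is a one-line reordering or a standard appeal to spectral calculus.
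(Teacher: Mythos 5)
Your direct verification is correct and is exactly the standard argument: the paper itself gives no proof of this proposition, merely recalling it from \cite{BT, PT2}, where the content is precisely the Weyl-relation bookkeeping you describe (the four cross-terms of $\be\bf-\bf\be$ reducing to $-(q-q^{-1})(K-K^{-1})=(2\sin\pi b^2)^2\frac{K-K^{-1}}{q-q^{-1}}$, positivity via Lemma \ref{qbi}, and the transcendental relations via \eqref{qbiEq}). The only point to treat with a little more care is the final parenthetical: the translation $u\mapsto u+\l$ matches $\be$ and $\bf$ with \eqref{EE} and \eqref{FF}, but the $K$-actions then differ by the sign of $\l$, which is harmless only because of the Weyl-group symmetry $\cP_\l\simeq\cP_{w(\l)}$ invoked later in Theorem \ref{removeL}.
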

\begin{Prop}\cite{Ip2}\label{typeA2} The positive representation $\cP_\l$ of $\cU_{q\til{q}}(\sl(3,\R))$ with parameters $\l=(\l_1,\l_2)\in\R_+^2$, corresponding to the reduced expression $w_0=s_2s_1s_2$, acting on $f(u,v,w)\in L^2(\R^3)$, is given by
\Eqn{
\be_1=&[v-w]e(-2p_v)+[u]e(-2p_v+2p_w-2p_u),\\
\be_2=&[w]e(-2p_w),\\
\bf_1=&[-v+u-2\l_1]e(2p_v),\\
\bf_2=&[-2u+v-w-2\l_2]e(2p_w)+[-u-2\l_2]e(2p_u),\\
K_1=&e(u-2v+w-2\l_1),\\
K_2=&e(-2u+v-2w-2\l_2).
}
\end{Prop}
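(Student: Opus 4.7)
The plan is to specialize Theorem \ref{FKaction} to $\g=\sl(3,\R)$ with the reduced expression $w_0=s_2s_1s_2$, after which most of the formulas reduce to bookkeeping. Following Definition \ref{variables}, the coordinates are $(v_1,v_2,v_3)=(u_2^2,u_1^1,u_2^1)$, which we rename $(u,v,w)$; the root indices are $r(1)=2,\,r(2)=1,\,r(3)=2$, and $A_2$ being simply-laced gives $b_1=b_2=b$ with Cartan entries $a_{ii}=2,\,a_{12}=a_{21}=-1$.

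First I would substitute into \eqref{FF} and \eqref{KK} to read off $\bf_1,\bf_2,K_1,K_2$. For instance, for $\bf_2$ the outer sum in \eqref{FF} runs over the two occurrences of $s_2$ in $w_0$, namely $k=1$ with $v(2,1)=3$ and $k=2$ with $v(2,2)=1$, producing the two terms $[-a_{2,2}v_1-a_{2,1}v_2-v_3-2\l_2]e(2p_{v_3})+[-v_1-2\l_2]e(2p_{v_1})$, which after the renaming matches the stated expression. Analogous one-line calculations give $\bf_1$, $K_1$, $K_2$. Since $w_0=s_2s_1s_2$ ends in $s_2$, the action of $\be_2$ is given directly by \eqref{EE} as $[w]e(-2p_w)$.

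The one genuinely non-mechanical step is the formula for $\be_1$, because $w_0$ does not terminate in $s_1$. Here I would pass through the alternative reduced expression $w_0'=s_1s_2s_1$ obtained from the Coxeter braid relation $s_2s_1s_2=s_1s_2s_1$, in which \eqref{EE} gives $\be_1$ the simple one-term form $[u_1'^1]e(-2p_1'^1)$. The two positive representations corresponding to these two reduced words are unitarily equivalent via an intertwiner $\Phi$ constructed from the Lusztig isomorphism in the positive setting (recalled from \cite{Ip4}), explicitly a product of quantum dilogarithm operators $g_b(\cdot)$. Conjugating the simple form of $\be_1$ by $\Phi$ then yields the claimed two-term expression $[v-w]e(-2p_v)+[u]e(-2p_v+2p_w-2p_u)$.

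The main obstacle is this last conjugation: one must verify that $\Phi$ sends a single positive monomial of the form $[x]e(-2p_x)$ into the stated sum of two positive monomials whose exponents are the precise $\Z$-linear combinations of $u,v,w,p_u,p_v,p_w$ appearing above. This reduces to the Faddeev pentagon identity and the standard Weyl-pair commutation formulas for $g_b$, applied with $U,V$ the appropriate linear exponentials dictated by the braid move. A useful cross-check, independent of the intertwiner route, is to verify directly that the two-term expression for $\be_1$ together with the stated $\bf_1$ and $K_1$ satisfies $[\be_1,\bf_1]=(q_1-q_1^{-1})(K_1-K_1^{-1})$ as well as the Serre relations with $\be_2,\bf_2$ and the $K_j$-commutations; these constraints pin down the coefficients and the $p$-shifts uniquely, which confirms both formulas. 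Positivity of each summand and the transcendental self-duality with $b\mapsto b^{-1}$ are then immediate from the observation that every operator is a sum of real linear exponentials in positive self-adjoint Weyl generators.
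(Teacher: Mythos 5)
The paper does not actually prove this proposition---it is recalled verbatim from \cite{Ip2}---so there is no internal proof to compare against. Your reconstruction (specialize Theorem \ref{FKaction} for $\bf_i$, $K_i$, $\be_2$; obtain $\be_1$ by passing through the braid-equivalent word $s_1s_2s_1$ and conjugating by the quantum-dilogarithm intertwiner) is the route taken in the cited source and is consistent with the machinery this paper recalls (Lemma \ref{qsum}, Proposition \ref{LuCox}, and the change-of-word transformations of Section \ref{sec:twist}). Your bookkeeping for $\bf_1,\bf_2$ and $\be_2$ is correct.

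Two points need attention. First, the literal specialization of \eqref{KK} gives $K_i=e(\cdots+2\l_i)$, not the $-2\l_i$ of the statement, so ``analogous one-line calculations give $K_1,K_2$'' is not quite true as written. The same discrepancy is already visible in rank one, where Proposition \ref{canonicalsl2} is only \emph{unitarily equivalent} to the canonical form \eqref{FF}--\eqref{EE} via a shift of variables; moreover internal consistency of $[\be_i,\bf_i]=(q_i-q_i\inv)(K_i\inv-K_i)$ (note the sign, which is also reversed in your cross-check) forces the $-2\l_i$ version once $\bf_i$ carries $-2\l_i$. You need either to fix the sign convention or to insert the compensating unitary shift explicitly. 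Second, the only substantive step---conjugating $[u_1'^1]e(-2p_1'^1)$ from the word $s_1s_2s_1$ into the two-term expression---is described but not executed. All the content of the proposition lives there: one must exhibit the explicit argument of the $g_b$ (an application of \eqref{qsum1}--\eqref{qsum2} composed with a linear change of variables, of the shape of \eqref{GNStrans}) and check that the output exponents are exactly those of $[v-w]e(-2p_v)+[u]e(-2p_v+2p_w-2p_u)$. Your fallback of verifying the defining relations is a legitimate substitute, but it is likewise only sketched (and the claim that the relations ``pin down'' the formula uniquely is an overstatement, since they only determine the representation up to unitary equivalence). As it stands the proposal is a correct plan rather than a complete proof.
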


\begin{Prop}\label{B2rep}\cite{Ip3} The positive representation  $\cP_\l$ of $\cU_{q\til{q}}(\g_\R)$ with parameters $\l=(\l_1,\l_2)\in\R_+^2$, where $\g_\R$ is of type $B_2$, corresponding to the reduced expression $w_0=s_1s_2s_1s_2$, acting on $f(t,u,v,w)\in L^2(\R^4)$, is given by
\Eqn{
\be_1=&[t]e(-2p_t-2p_u+2p_w)+[u-v]e(-2p_u-2p_v+2p_w)+[v-w]e(-2p_v),\\
\be_2=&[w]e(-2p_w),\\
\bf_1=&[-t-2\l_1]e(2p_t)+[-2t+u-v-2\l_1]e(2p_v),\\
\bf_2=&[2t-u-2\l_2]e(2p_u)+[2t-2u+2v-w-2\l_2]e(2p_w),\\
K_1=&e(-2t+u-2v+w-2\l_1),\\
K_2=&e(2t-2u+2v-2w-2\l_2).
}
In this case (cf. Definition \ref{usul}), $u_s$ is linear combinations of $\{t,v\}$, while $u_l$ is linear combinations of $\{u,w\}$. Similarly for $p_s$ and $p_l$. 
\end{Prop}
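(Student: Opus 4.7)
The plan is to apply Theorem \ref{FKaction} to the $B_2$ Cartan data---where $\a_1$ is short and $\a_2$ is long, so $a_{12}=-2$, $a_{21}=-1$, $b_1=b_s$, $b_2=b_l$---for the chosen reduced expression $w_0 = s_1 s_2 s_1 s_2$. Labelling $(v_1,v_2,v_3,v_4) = (t,u,v,w)$, one reads off $r(1)=r(3)=1$ and $r(2)=r(4)=2$, hence $u_1^1=v$, $u_1^2=t$, $u_2^1=w$, $u_2^2=u$. Direct substitution into \eqref{FF} and \eqref{KK} produces the stated formulas for $K_1, K_2, \bf_1$ and $\bf_2$. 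Since $s_2$ is the rightmost simple reflection of $w_0$, formula \eqref{EE} also gives $\be_2 = [u_2^1]e(-2p_2^1) = [w]e(-2p_w)$ immediately.

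The remaining operator $\be_1$ is not at the right end of $w_0$, so its form cannot be extracted from \eqref{EE} in the present coordinates. To obtain it, I would pass to the alternate reduced expression $w_0' = s_2 s_1 s_2 s_1$ via the $B_2$ Coxeter relation $s_1 s_2 s_1 s_2 = s_2 s_1 s_2 s_1$. In $w_0'$ the reflection $s_1$ sits rightmost, so \eqref{EE} represents $\be_1$ there as a single-term operator. The positive version of Lusztig's braid isomorphism constructed in \cite{Ip4} supplies a unitary on $L^2(\R^4)$, built from compositions of the quantum dilogarithm $g_b$, that intertwines the representations attached to $w_0$ and $w_0'$; conjugating the simple form of $\be_1$ by this unitary produces the three-term expression asserted in the proposition.

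Each summand of the resulting operators has the standard positive form $[X]e(-2p_Y)$ of \eqref{standardform} (with the conventions of Definition \ref{usul}), so essential self-adjointness and positivity of every generator follow term by term, and the transcendental relations \eqref{transdef} hold by functional calculus after the substitution $b_i \mapsto b_i^{-1}$. The $K$-commutation relations and the $[\be_i,\bf_j]$ relation reduce to term-by-term applications of the Weyl identity $e^{\a u}e^{\b p_u} = e^{\a\b/(2\pi\bi)}e^{\b p_u}e^{\a u}$. The main obstacle is the verification of the two $B_2$ Serre relations, in particular the quartic one which is cubic in $\be_1$: because $\be_1$ is a three-term operator, the expansion generates a large number of cross-terms whose cancellation requires careful bookkeeping of the Weyl commutation factors. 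The cleanest way to sidestep this computation is to invoke the Lusztig-isomorphism viewpoint above: once $\be_1$ is obtained by unitary conjugation from the single-term operator in the $w_0'$ realization (where the Serre relations are inherited from the rank-two inductive construction of \cite{Ip3}), the same relations hold automatically in the $w_0$ realization.
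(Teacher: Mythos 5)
The paper does not actually prove this proposition: it is recalled verbatim from \cite{Ip3} (``For details of the construction and the other cases please refer to \cite{Ip2,Ip3}''), so your derivation can only be measured against the machinery summarized in Theorem \ref{FKaction}. Your overall strategy is the right one --- read off $\bf_i$, $K_i$ and $\be_2$ from \eqref{FF}, \eqref{KK}, \eqref{EE} with the labelling $u_1^1=v$, $u_1^2=t$, $u_2^1=w$, $u_2^2=u$, and obtain $\be_1$ by passing to $w_0'=s_2s_1s_2s_1$ and conjugating by the Lusztig braid unitary --- and your identification of the Cartan data ($a_{12}=-2$, $a_{21}=-1$) is correct.

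The concrete gap is the claim that ``direct substitution into \eqref{FF} and \eqref{KK} produces the stated formulas.'' It does not. Substituting $a_{1,r(2)}=a_{12}=-2$ literally into \eqref{KK} gives $K_1=e(-2t+2u-2v+2w+2\l_1)$, whereas the proposition asserts $K_1=e(-2t+u-2v+w-2\l_1)$: the long-root variables $u,w$ acquire coefficient $2$ instead of $1$, and $\l_1$ appears with the opposite sign; the same mismatch occurs in the $u$-coefficient of the second bracket of $\bf_1$ and in the $t,v$-coefficients of $\bf_2$. The stated coefficients are the ones actually forced by the defining relations once the weighting of Definition \ref{usul} is imposed (each variable carries $\pi b_s$ or $\pi b_l$ according to its own root length): for instance $K_1\be_2=q_1^{a_{12}}\be_2K_1=q_s^{-2}\be_2K_1$ requires the $w$-dependence of $K_1$ to be $e^{\pi b_l w}$, not $e^{2\pi b_l w}$. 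In effect, the non-simply-laced version of \eqref{FF}--\eqref{KK} must use $a_{r(j),i}$ where the displayed formula reads $a_{i,r(j)}$; these coincide in the simply-laced case, which is why the discrepancy is invisible in type $A$ but fatal here. A quick check of even one commutation relation would have revealed that your ``direct'' formulas are off. Separately, the derivation of $\be_1$ is asserted rather than carried out: in the doubly-laced case the braid unitary relating $s_1s_2s_1s_2$ to $s_2s_1s_2s_1$ is a product of several quantum dilogarithms (cf.\ the remark following \eqref{GNStrans}), and producing the three-term expression, with the correct momentum shifts $e(-2p_t-2p_u+2p_w)$ etc., is a genuine computation that your sketch leaves entirely to the reader.
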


We will omit the case of type $G_2$ in this paper for simplicity.

\subsection{Quantum dilogarithm $G_b(x)$ and $g_b(x)$}\label{sec:prelim:qdlog}
First introduced by Faddeev and Kashaev \cite{Fa1, FKa}, the quantum dilogarithm $G_b(x)$ and its variants $S_b(x)$ and $g_b(x)$ play a crucial role in the study of positive representations of split real quantum groups, and also appear in many other areas of mathematics and physics, most notably cluster algebras, quantum Teichm\"{u}ller theory and Liouville CFT. In this subsection, we recall the definition and some properties of the quantum dilogarithm functions \cite{BT, Ip1, PT2} that is needed in the calculations of this paper.

\begin{Def} The quantum dilogarithm function $G_b(x)$ is defined on\\ ${0\leq Re(x)\leq Q=b+b\inv}$ by
\Eq{\label{intform} G_b(x)=\over[\ze_b]\exp\left(-\int_{\W}\frac{e^{\pi tx}}{(e^{\pi bt}-1)(e^{\pi b\inv t}-1)}\frac{dt}{t}\right),}
where $\ze_b=e^{\frac{\pi \bi }{2}(\frac{b^2+b^{-2}}{6}+\frac{1}{2})}$,
and the contour $\W$ goes along $\R$ and goes above the pole at $t=0$. This can be extended meromorphically to the whole complex plane with poles at $x=-nb-mb\inv$ and zeros at $x=Q+nb+mb\inv$, for $n,m\in\Z_{\geq0}$.
\end{Def}

\begin{Def} \label{Sb}The function $S_b(x)$ is defined by
\Eq{S_b(x) := e^{\frac{\pi \bi}{2} x(Q-s)}G_b(x).}
\end{Def}

\begin{Prop}
The quantum dilogarithms satisfy the self-duality relations:
\Eq{S_b(x) = S_{b\inv}(x),\tab G_b(x)=G_{b\inv}(x),\label{selfdual}}
as well as the functional equations
\Eq{\label{funceq}S_b(x+b^{\pm 1}) &= i(e^{-\pi\bi b^{\pm1} x}-e^{\pi\bi b^{\pm1} x})S_b(x)\\
 \tab G_b(x+b^{\pm 1})&=(1-e^{2\pi \bi b^{\pm 1}x})G_b(x).}
\end{Prop}

We will also need another important variant of the quantum dilogarithm.
\begin{Def} The function $g_b(x)$ is defined by
\Eq{g_b(x)=\frac{\over[\ze_b]}{G_b(\frac{Q}{2}+\frac{\log x}{2\pi \bi b})},}
where $\log$ takes the principal branch of $x$.
\end{Def}

We will need the following properties of $g_b(x)$:
\begin{Lem}\label{FT} \cite[(3.31), (3.32)]{BT} We have the following Fourier transformation formula:
\Eq{\int_{\R+\bi 0} \frac{e^{-\pi \bi  t^2}}{G_b(Q+\bi t)}X^{\bi b\inv t}dt&=g_b(X),\\ \label{FT1}
\int_{\R+\bi 0} \frac{e^{-\pi Qt}}{G_b(Q+\bi t)}X^{\bi b\inv t}dt&=g_b^*(X) ,} where $X$ is a positive operator and the contour goes above the pole
at $t=0$.
\end{Lem}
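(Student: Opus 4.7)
First, I would reduce the operator identity to a pointwise scalar identity. Since $X$ is positive self-adjoint, functional calculus permits replacing $X$ by $x>0$ and verifying the identity spectrally. Writing $x=e^{2\pi b u}$ for $u\in\R$ converts $X^{\bi b\inv t}$ into $e^{2\pi \bi t u}$, so the integral (\ref{FT1}) becomes an oscillatory Fourier integral
$$F(u):=\int_{\R+\bi 0}\frac{e^{-\pi\bi t^2+2\pi \bi t u}}{G_b(Q+\bi t)}dt,$$
while the right-hand side is $\over[\ze_b]/G_b(Q/2-\bi u)$ by the definition of $g_b$ and the identity $\log(e^{2\pi b u})/(2\pi \bi b)=-\bi u$. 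Convergence of $F(u)$ must be justified using the asymptotics in Proposition \ref{asymp}: $1/G_b(Q+\bi t)$ grows like $e^{-\pi\bi t(t+Q)}$ along one direction in the upper half plane and stays bounded in the other, which together with the Gaussian factor $e^{-\pi \bi t^2}$ yields conditional convergence controlled by the $\bi 0$ prescription.

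The core step is to verify that both sides satisfy the same first-order $b$-difference equation. Shifting the integration variable $t\mapsto t+\bi b$, completing the square in the Gaussian, and applying the functional equation (\ref{funceq}) in the form $G_b(Q+\bi t-b)=G_b(Q+\bi t)/(1-e^{-2\pi b t})$ (after simplifying $e^{2\pi\bi b Q}=e^{2\pi\bi b^2}$ using $e^{2\pi\bi}=1$), together with a check that no poles of the integrand are crossed, yields $F(u+\bi b)(1+qe^{2\pi b u})=F(u)$. The right-hand side satisfies the same recursion directly: applying (\ref{funceq}) to $G_b(Q/2-\bi u+b)$ and using $e^{\pi \bi b Q}=-q$ gives $G_b(Q/2-\bi u+b)=(1+qe^{2\pi b u})G_b(Q/2-\bi u)$. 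By the self-duality (\ref{selfdual}), an entirely parallel argument with $b$ replaced by $b\inv$ shows that both sides also satisfy the corresponding $b\inv$-difference equation.

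A standard uniqueness argument, using that meromorphic functions of $u$ satisfying both a $b$- and a $b\inv$-difference equation with controlled growth are unique up to a multiplicative constant (which is where $b^2\in\R\setminus\Q$ enters), then forces $F(u)=c\cdot\over[\ze_b]/G_b(Q/2-\bi u)$. The constant $c=1$ is pinned down by matching asymptotic behavior: as $u\to-\infty$ one has $g_b(x)\to 1$ from its product expansion, while $F(u)$ can be evaluated by a stationary phase or residue computation against the explicit normalization constant $\over[\ze_b]$. The second identity in the lemma then follows by the same scheme with $e^{-\pi\bi t^2}$ replaced by $e^{-\pi Qt}$, producing the dual $g_b^*$ in place of $g_b$. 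The principal obstacle will be the delicate handling of the contour deformations past the pole at $t=0$ and the zeros of $G_b(Q+\bi t)$ at $t=-\bi(nb+mb\inv)$, together with the rigorous justification of convergence for integrals that only decay oscillatorily along the real line.
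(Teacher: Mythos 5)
The paper does not actually prove this lemma: it is quoted verbatim from Bytsko--Teschner \cite[(3.31), (3.32)]{BT}, so there is no internal argument to compare against. Your proposal is a correct and essentially complete proof strategy, and it is in fact the standard one in this literature: reduce to scalars by the spectral theorem, check that both sides satisfy the same pair of incommensurate $b$- and $b^{-1}$-difference equations, invoke uniqueness for doubly quasi-periodic holomorphic functions with $b^2\in\R\setminus\Q$, and fix the constant by asymptotics. I verified your key recursion: writing $G_b(Q+\bi t)=(1-e^{-2\pi bt})G_b(Q+\bi t-b)$ and shifting the contour from $\R+\bi b$ back to $\R+\bi 0$ (crossing no poles, since all poles of $1/G_b(Q+\bi t)$ sit at $t=-\bi(nb+mb^{-1})$, $n,m\ge 0$) indeed gives $F(u)=(1+qe^{2\pi bu})F(u+\bi b)$, matching the recursion for $\overline{\ze_b}/G_b(Q/2-\bi u)$. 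Two small points deserve care: (i) your description of the asymptotics is slightly garbled --- by Proposition \ref{asymp} the integrand of \eqref{FT1} actually \emph{decays} like $e^{\pi Qt}$ as $\mathrm{Re}(t)\to-\infty$ (the Gaussian cancels against $e^{\pi\bi t^2}$ from $1/G_b$) and is purely oscillatory as $\mathrm{Re}(t)\to+\infty$, so the convergence issue is Fresnel-type at $+\infty$ only; (ii) the uniqueness step requires first showing that the ratio of the two sides is pole-free in a strip, which takes a short argument via the meromorphic continuation of $F$ furnished by the recursion. An alternative, somewhat more self-contained route (and closer to how such formulas are often derived) is to close the contour in the lower half-plane and sum the residues at $t=-\bi(nb+mb^{-1})$, which directly reproduces the double series/product expansion of $g_b$ and fixes the normalization $\overline{\ze_b}$ without a separate constant-matching step.
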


\begin{Lem}\label{unitary} $g_b(X)$ is a unitary operator for any positive operator $X$.
\end{Lem}

\begin{Lem}\label{qsum}If $UV=q^2VU$ where $U,V$ are positive self adjoint operators, then
\begin{eqnarray}
\label{qsum0}g_b(U)g_b(V)&=&g_b(U+V),\\
\label{qsum1}g_b(U)^*Vg_b(U) &=& q\inv UV+V,\\
\label{qsum2}g_b(V)Ug_b(V)^*&=&U+q\inv UV.
\end{eqnarray}
Note that \eqref{qsum0} and \eqref{qsum1} together imply the pentagon relation
\Eq{\label{qpenta}g_b(V)g_b(U)=g_b(U)g_b(q\inv UV)g_b(V).}
\end{Lem}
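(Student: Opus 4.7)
The plan is to establish the three identities in order and then derive the pentagon \eqref{qpenta} as a short consequence. The main algebraic inputs are the functional equation \eqref{funceq} and reflection property \eqref{reflection} of $G_b$, together with the Fourier integral representation \eqref{FT1} of $g_b$.

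\emph{Step 1: identities \eqref{qsum1} and \eqref{qsum2}.} From $UV=q^2 VU$ we read off $VU=q^{-2}UV$, so by continuous functional calculus $V\, f(U) = f(q^{-2}U)\, V$ for any suitable $f$; in particular $V g_b(U) = g_b(q^{-2}U) V$. Hence
\[
g_b(U)^* V g_b(U) = g_b(U)^* g_b(q^{-2}U)\, V.
\]
Let $x_U := \frac{Q}{2} + \frac{\log U}{2\pi \bi b}$, whose spectrum lies on the line $\mathrm{Re}(x)=Q/2$; on that line $|G_b(x)|=1$ by \eqref{gb1}, so $g_b(U)^* = \ze_b\, G_b(x_U)$ while $g_b(U) = \over[\ze_b]/G_b(x_U)$. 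Since $\log(q^{-2}U) = \log U - 2\pi \bi b^2$, we have $x_{q^{-2}U} = x_U - b$, and using $|\ze_b|=1$ together with the functional equation \eqref{funceq},
\[
g_b(U)^* g_b(q^{-2}U) = \frac{G_b(x_U)}{G_b(x_U - b)} = 1 - e^{2\pi \bi b(x_U - b)}.
\]
A direct computation gives $e^{2\pi \bi b\, x_U} = e^{\pi \bi b Q}\cdot U = -qU$, hence $e^{2\pi \bi b(x_U - b)} = -q^{-1} U$ and $g_b(U)^*g_b(q^{-2}U) = 1 + q^{-1}U$, proving \eqref{qsum1}. Identity \eqref{qsum2} is the parallel calculation with the roles exchanged: from $f(V)U = U f(q^{-2}V)$ one obtains $g_b(V) U g_b(V)^* = U g_b(q^{-2}V) g_b(V)^* = U(1 + q^{-1}V)$.

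\emph{Step 2: identity \eqref{qsum0}.} Use the Fourier representation \eqref{FT1} for both sides. Directly,
\[
g_b(U) g_b(V) = \iint \frac{e^{-\pi \bi (s^2+t^2)}}{G_b(Q+\bi s) G_b(Q+\bi t)}\, U^{\bi b\inv s} V^{\bi b\inv t}\, ds\, dt,
\]
while for $g_b(U+V)$ we apply \eqref{FT1}, expand $(U+V)^{\bi b\inv \s}$ by the $q$-binomial theorem (Lemma \ref{qbinom}), and substitute $(\s,\t)\mapsto(s,t):=(\s-\t,\t)$ to obtain
\[
g_b(U+V) = \iint \frac{e^{-\pi \bi (s+t)^2}\, G_b(-\bi s) G_b(-\bi t)}{G_b(Q+\bi(s+t))\, G_b(-\bi(s+t))}\, U^{\bi b\inv s} V^{\bi b\inv t}\, ds\, dt.
\]
Equating integrands, the identity collapses to
\[
G_b(Q + \bi(s+t))\, G_b(-\bi(s+t))\cdot e^{2\pi \bi st} = G_b(Q+\bi s) G_b(-\bi s)\cdot G_b(Q+\bi t) G_b(-\bi t).
\]
By the reflection property \eqref{reflection}, each factor $G_b(Q+\bi r)G_b(-\bi r)$ equals $e^{-\pi Q r - \pi \bi r^2}$, and the claim reduces to the elementary identity $(s+t)^2 = s^2 + t^2 + 2st$.

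\emph{Step 3: pentagon \eqref{qpenta}.} Set $U' := q\inv UV$; then $U'V = q^2 V U'$, so by \eqref{qsum0}, $g_b(U') g_b(V) = g_b(U' + V)$. By \eqref{qsum1} we have $U' + V = g_b(U)^* V g_b(U)$, and since unitary conjugation commutes with continuous functional calculus, $g_b(U' + V) = g_b(U)^* g_b(V) g_b(U)$. Multiplying on the left by $g_b(U)$ yields $g_b(U) g_b(U') g_b(V) = g_b(V) g_b(U)$, which is \eqref{qpenta}. The algebra is straightforward; the genuine obstacle is the domain bookkeeping for the unbounded positive operators $U, V$. The Fubini exchange and the contour manipulation in Step 2 must be justified on a joint core of analytic vectors on which $U^{\bi b\inv s}V^{\bi b\inv t}$ depends holomorphically on $(s,t)$ in an appropriate strip, after which the identity extends to the full Hilbert space by closure. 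A parallel care is required in Step 1 to identify $g_b(U)^*$ with $\ze_b G_b(x_U)$ via the self-adjointness of $\log U$ and the unimodularity \eqref{gb1} of $G_b$ on the critical line $\mathrm{Re}(x)=Q/2$.
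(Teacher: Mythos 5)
Your proposal is correct. The paper itself states Lemma \ref{qsum} without proof, quoting it as a standard property of $g_b$ from the cited literature ([BT], [FKa], [Vo], [Ip1]), and your argument is precisely the standard one found there: the functional equation \eqref{funceq} for the conjugation identities \eqref{qsum1}--\eqref{qsum2}, the $q$-binomial theorem plus the reflection property for the summation identity \eqref{qsum0}, and the derivation of the pentagon from \eqref{qsum0} and \eqref{qsum1} exactly as the paper's own remark indicates; your closing caveats about contours and operator domains are the right ones to flag.
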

The above relations can be generalized to the non-simply-laced case \cite[Prop 3.3]{Ip4} which give relations between $g_{b_s}$ and $g_{b_l}$.
\subsection{Lusztig's isomorphism}\label{sec:prelim:Ti}
We recall several definitions and results concerning the Lusztig's isomorphism in the positive representation setting needed in this paper. See \cite{Ip4} for more details.

Fix a positive representation $\cP_\l$ of $\cU_{q\til{q}}(\g_\R)$., and denote by 
\Eq{[u,v]_q:=quv-q\inv vu.}

\begin{Prop}\label{eij} In the simply-laced case, the operators
\Eq{\be_{ij}:=\frac{[\be_j,\be_i]_{q^\half}}{q-q\inv}=\frac{q^{\half}\be_j\be_i-q^{-\half}\be_i\be_j}{q-q\inv}\label{eij1}}
are positive essentially self-adjoint on $\cP_\l$.
\end{Prop}

\begin{Prop}\label{eij2} In the non-simply-laced case, the operators
\Eq{\be_{ij}:=&(-1)^{a_{ij}}\left[\left[\be_i,...[\be_i,\be_j]_{q_i^{\frac{a_{ij}}{2}}}\right]_{q_i^{\frac{a_{ij}+2}{2}}}...\right]_{q_i^{\frac{-a_{ij}-2}{2}}}\prod_{k=1}^{-a_{ij}}(q_i^k-q_i^{-k})\inv\label{eij3}}
are positive essentially self-adjoint on $\cP_\l$.
\end{Prop}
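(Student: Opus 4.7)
The plan is to reduce the statement to the rank-two cases $B_2$, $C_2$, and $G_2$, and verify positivity by direct calculation in a well-chosen reduced expression of $w_0$. Since the defining expression for $\be_{ij}$ involves only the two generators $\be_i$ and $\be_j$, and since positive representations for different reduced expressions of $w_0$ are unitarily equivalent (as stated in the background on positive representations), it suffices to work in a reduced expression whose final factors form a reduced expression of the longest element of the rank-two parabolic subgroup generated by $s_i,s_j$. In that set-up the action of $\be_i,\be_j$ on the last few coordinates of $L^2(\R^{l(w_0)})$ coincides (up to commuting positive factors) with the rank-two formulas of Propositions \ref{canonicalsl2}, \ref{typeA2}, and \ref{B2rep}, so checking positivity in the rank-two case gives positivity in general.

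Next, I would carry out the iterated $q$-commutator calculation by induction on $k=1,\dots,-a_{ij}$, setting
\Eqn{
\be_{ij}^{(k)}:=\left[\be_i,\left[\be_i,\dots,[\be_i,\be_j]_{q_i^{a_{ij}/2}}\right]_{q_i^{(a_{ij}+2)/2}}\dots\right]_{q_i^{(a_{ij}+2k-2)/2}},
}
so that $\be_{ij}=(-1)^{a_{ij}}\be_{ij}^{(-a_{ij})}\prod_{k=1}^{-a_{ij}}(q_i^k-q_i^{-k})\inv$. Using the explicit form $\be_j=[u_j^1]e(-2p_j^1)$ when $s_j$ is the rightmost factor of $w_0$, together with the expression for $\be_i$ from Theorem \ref{FKaction} (a sum of two or three positive terms of the form $[X]e(-2p_Y)$ in the rank-two cases), the bracket $[\be_i,\be_j]_{q_i^{a_{ij}/2}}$ can be computed term-by-term: generically the $q$-commutator of two positive monomials $[X_1]e(-2p_{Y_1})$ and $[X_2]e(-2p_{Y_2})$ with appropriate commutation relation either vanishes, or collapses into a single positive monomial of the same shape (this is the manifestation of the quantum Serre relation at the level of individual terms). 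Iterating this, each $\be_{ij}^{(k)}$ turns out to be a sum of monomials $c_k[X]e(-2p_Y)$, and the signs/normalizations chosen in the definition of $\be_{ij}$ are designed precisely to make the coefficient $(-1)^{a_{ij}}c_{-a_{ij}}\prod_k(q_i^k-q_i^{-k})\inv$ positive.

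The genuine obstacle is the combinatorial bookkeeping for $G_2$ (where $a_{ij}=-3$ so three nested brackets must be simplified) and for the short-root/long-root asymmetry in $B_2,C_2,F_4$ (where one must track carefully whether the bracket parameter is $q_i=q_l$ or $q_i=q_s$). I would handle this by organizing each $\be_{ij}^{(k)}$ in the exponent-form notation $[X]e(-2p_Y)$ of Definition \ref{usul}, so that the commutation of two monomials becomes linear arithmetic on the exponent vectors $Y$, and collapsing a $q$-bracket to a single term amounts to checking a linear identity between these vectors. The telescoping structure of the $q_i^{(a_{ij}+2k-2)/2}$ prefactors guarantees that at each step exactly one of the two terms in each $[X]$ survives after the bracket, which is the technical heart of the calculation.

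Finally, once $\be_{ij}$ is exhibited as a finite sum of terms $[X]e(-2p_Y)$ with real linear $X,Y$ satisfying the standard commutation $[p_Y,X]=\tfrac{1}{2\pi\bi}$ (so that each summand is positive essentially self-adjoint by \eqref{standardform}) and with the summands pairwise $q$-commuting at an appropriate power, positivity and essential self-adjointness of the sum follow from Lemma \ref{qbi}. Transporting back to an arbitrary reduced expression via the unitary equivalences of positive representations then yields the statement for all simple $\g$ of non-simply-laced type.
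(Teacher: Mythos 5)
The paper gives no proof of this proposition: Section \ref{sec:prelim:Ti} explicitly recalls it from \cite{Ip4}, and the verification there is carried out essentially along the lines you propose --- reduce to the rank-two parabolic generated by $s_i,s_j$ (using a reduced word for $w_0$ ending in the longest word of that parabolic, so that $\be_i,\be_j$ act only on the last $m_{ij}$ coordinates via the formulas of Propositions \ref{typeA2} and \ref{B2rep}), expand the nested $q$-commutators in the explicit representation, observe that they collapse into a sum of standard-form terms $[X]e(-2p_Y)$ which pairwise $q_i^2$-commute, and invoke Lemma \ref{qbi} for positivity and essential self-adjointness of the sum. So your route is the intended one. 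The one caution is that the middle step of your plan --- that at each nesting level ``exactly one of the two terms in each $[X]$ survives'' so that the bracket collapses to a single monomial per term --- is precisely the content of the proposition and is asserted rather than derived; for the doubly-laced case and especially for $G_2$ (where $a_{ij}=-3$ and one must track the $q_s$ versus $q_l$ bookkeeping through three nested brackets) this is a genuine computation that your outline defers rather than completes, so as written this is a correct plan but not yet a proof.
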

Similarly we define $\bf_{ij}$ by \eqref{eij1} and \eqref{eij3} with $\be$ replaced by $\bf$.
\begin{Thm}\label{LusThm} For any root $i\in I$, there exists a unitary operator $T_i$ such that
\Eq{
T_i(\be_i)=&q_i \bf_iK_i\inv=q_i\inv K_i\inv \bf_i,\\
T_i(\bf_i)=&q_i\inv K_i \be_i=q_i\be_i K_i,\\
T_i(\be_j)=&\be_{ij}, \tab T_i(\bf_j)=\bf_{ij},\tab\mbox{ for $i,j$ adjacent,}\\
T_i(\be_k)=&\be_k,\tab T_i(\bf_k)=\bf_k,\tab\mbox{ for $a_{ik}=0$,}\\
T_i(K_j)=&K_jK_i^{-a_{ij}}.
}
\end{Thm}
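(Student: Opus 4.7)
The strategy is to construct $T_i$ explicitly as a composition of quantum dilogarithm unitaries (together with a Fourier-type intertwiner), and then verify each transformation formula directly using the $q$-commutation identities in Lemma \ref{qsum}. The underlying motivation is that, on the algebraic level, $T_i$ is the classical Lusztig automorphism, which preserves positivity; in the positive representation setting it must therefore be implementable by a unitary operator on $L^2(\R^{l(w_0)})$, and one can assemble the relevant unitary out of the building blocks $g_b(X)$ (which are unitary whenever $X$ is positive self-adjoint, by Lemma \ref{unitary}) and a variable-swap coming from the Fourier transform.

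First I would separate the claims by rank. The formulas $T_i(\be_k)=\be_k$, $T_i(\bf_k)=\bf_k$, and $T_i(K_j)=K_jK_i^{-a_{ij}}$ for $a_{ik}=0$ are automatic provided $T_i$ is chosen so that it acts only on the variables appearing in $\be_i,\bf_i$ and the adjacent generators, which is ensured by construction. The substantive content therefore lies in two cases: (a) the rank $1$ relations for $\be_i,\bf_i,K_i$ themselves, and (b) the rank $2$ relations $T_i(\be_j)=\be_{ij}$, $T_i(\bf_j)=\bf_{ij}$ for $j$ adjacent to $i$.

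For case (a), I would restrict to a copy of $\cU_{q\til{q}}(\sl(2,\R))$ and use the explicit form in Proposition \ref{canonicalsl2}. The map $\be_i\mapsto q_i\bf_iK_i\inv$ essentially exchanges the roles of $\be_i$ and $\bf_i$ (up to a $K_i$-twist), which at the level of the underlying variables amounts to a $(u,p)$-exchange combined with shifts. This can be realized as a conjugation of a Fourier transform by a product of the form $g_b(\be_i)g_b(\bf_i)$ or its variants; applying \eqref{qsum1}--\eqref{qsum2} term by term to $\be_i$ and $\bf_i$ (using $\be_i\bf_i=q_i^{\pm 2}\bf_i\be_i$ on the appropriate spectral pieces) produces the required identities $T_i(\be_i)=q_i\bf_iK_i\inv$ and $T_i(\bf_i)=q_i\inv K_i\be_i$, while the action on $K_i$ is automatic from the action on $u_i$.

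For case (b), I would use the explicit rank $2$ formulas in Propositions \ref{typeA2} and \ref{B2rep} (and the analogous $G_2$ formula alluded to in the paper). The positive operator $\be_{ij}$ from Propositions \ref{eij}--\ref{eij2} should be matched with $T_i(\be_j)$ by conjugating the explicit expression for $\be_j$ by the $T_i$ constructed in (a), again using \eqref{qsum1}--\eqref{qsum4} to compute how each summand of $\be_j$ transforms. The main obstacle is this rank $2$, non-simply-laced step: $\be_{ij}$ is an iterated $q$-commutator \eqref{eij3} with up to three nested brackets in the $G_2$ case, and one must both verify that the conjugated expression recombines into the prescribed nested $q$-commutator and check that it remains positive essentially self-adjoint (so that Lemma \ref{qbi} applies to the transcendental relations). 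This is a careful but mechanical calculation once the building blocks of $T_i$ have been pinned down; the $\bf_j$ relation follows by a parallel computation, and consistency with the rank $1$ part and with the Hopf structure fixes $T_i(K_j)=K_jK_i^{-a_{ij}}$.
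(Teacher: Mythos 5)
First, a point of order: this paper does not prove Theorem \ref{LusThm} at all --- it is recalled from \cite{Ip4} (see the opening sentence of Section \ref{sec:prelim:Ti}), so there is no in-paper proof to compare against. That said, your overall architecture --- build $T_i$ explicitly out of quantum dilogarithm unitaries $g_b(X)$ together with a Gaussian/Fourier-type factor, reduce to the rank $1$ and rank $2$ cases, and verify the formulas by direct computation with Lemma \ref{qsum} --- is indeed the route taken in \cite{Ip4}, so the plan is of the right shape.

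There is, however, a concrete error in your rank $1$ step. You propose to apply \eqref{qsum1}--\eqref{qsum2} to the pair $(\be_i,\bf_i)$ ``using $\be_i\bf_i=q_i^{\pm2}\bf_i\be_i$ on the appropriate spectral pieces.'' That relation is false: $[\be_i,\bf_i]$ is a nonzero multiple of $K_i-K_i\inv$, so $\be_i$ and $\bf_i$ neither commute nor Weyl-commute, and Lemma \ref{qsum} (which requires $UV=q^2VU$ for positive self-adjoint $U,V$) cannot be applied to that pair. The computation has to be done term by term on the single exponentials of the explicit representation in Proposition \ref{canonicalsl2}: the individual Weyl-type operators $e^{\pi b(\pm u\mp\l\pm 2p)}$ do satisfy $q^{\pm2}$-commutation relations, and the rank $1$ intertwiner is a Gaussian times $g_b$ of such single exponentials composed with a linear reflection of variables --- compare the operator $\Phi=e^{-\frac{\pi\bi(x-\l)^2}{2}}g_b(e^{-2\pi b(x-\l)})$ appearing in Section \ref{sec:borel}. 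A second, smaller gap: your claim that $T_i(\be_k)=\be_k$ for $a_{ik}=0$ is ``automatic because $T_i$ acts only on the variables of $\be_i,\bf_i$'' does not work in the coordinates of Theorem \ref{FKaction}, where $\bf_i$ involves essentially all of $v_1,\dots,v_N$, so there is no separation of variables; the correct argument is algebraic --- once $T_i$ is realized as conjugation by an operator built from $\be_i,\bf_i,K_i$ alone, it fixes $\be_k,\bf_k,K_k$ because each of $\be_i,\bf_i,K_i$ commutes with them when $a_{ik}=0$. Likewise $T_i(K_j)=K_jK_i^{-a_{ij}}$ should be computed directly rather than inferred from ``consistency with the Hopf structure.'' With these repairs, and with the non-simply-laced rank $2$ verification carried out honestly against the nested $q$-commutators \eqref{eij3}, your outline reproduces the construction of \cite{Ip4}.
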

\begin{Prop}\label{LuCox}\cite{Lu1, Lu2} The operators $T_i$ satisfy the Coxeter relations:
\Eq{\underbrace{T_iT_jT_i...}_{-a_{ij}'+2} = \underbrace{T_j T_i T_j...}_{-a_{ij}'+2}.\label{coxeter},}
where $-a'_{ij}=\max\{-a_{ij},-a_{ji}\}$.
Furthermore, for $\a_i,\a_j$ simple roots, and an element $w=s_{i_1}...s_{i_k}\in W$ such that $w(\a_i)=\a_j$, we have for $X=\be,\bf,K$:
\Eq{T_{i_1}...T_{i_k}(X_i)=X_j.}
\end{Prop}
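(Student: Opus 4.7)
The plan is to reduce the braid identity \eqref{coxeter} to its classical algebraic counterpart due to Lusztig \cite{Lu1, Lu2}, and then verify that the passage from the algebraic statement to the unitary statement in the positive representation introduces no phase discrepancy. Theorem \ref{LusThm} already pins down the action of each $T_i$ by conjugation on the generators $\be_k, \bf_k, K_k$. Writing $L$ and $R$ for the two sides of \eqref{coxeter}, the automorphism property together with Lusztig's algebraic Coxeter relation in $\cU_q(\g)$ yields that conjugation by $L$ and by $R$ induces the same automorphism of the subalgebra generated by $\{\be_k, \bf_k, K_k\}_{k \in I}$ acting on $\cP_\l$.

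Since all generators are realized as positive essentially self-adjoint operators with no common bounded commutant other than scalars, the two unitaries $L$ and $R$ must differ by a phase $c \in U(1)$. To show $c = 1$, I would compute both compositions explicitly in the rank-two case, using that each $T_i$ can be written as a product of a quantum dilogarithm $g_b$ of a positive monomial in the $\be$'s and $K$'s together with an affine symplectic change of variables. The pentagon relation \eqref{qpenta} of Lemma \ref{qsum}, together with its higher-order cousins \eqref{qsum3}--\eqref{qsum4}, would reduce $L$ and $R$ to a common normal form and verify $c = 1$. The rank-two reduction suffices because \eqref{coxeter} only involves two indices and the remaining variables decouple through commuting affine factors.

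For the second assertion, I would induct on $k = \ell(w)$. The base $k = 0$ is trivial. For the step, write $w = s_{i_1} w'$ and set $\alpha_{i'} := w'(\alpha_i)$, so that $s_{i_1}(\alpha_{i'}) = \alpha_j$. That $\alpha_{i'}$ is sent to a simple root by a single simple reflection forces either $i_1 = i'$ with $j = i'$, or $i_1, i'$ adjacent with $\alpha_j = s_{i_1}(\alpha_{i'})$ a simple root of the appropriate Cartan type (with the analogous cases in the non-simply-laced setting); in each case the explicit formulae of Theorem \ref{LusThm} give $T_{i_1}(X_{i'}) = X_j$ for $X \in \{\be, \bf, K\}$. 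By the inductive hypothesis $T_{i_2} \cdots T_{i_k}(X_i) = X_{i'}$, and applying $T_{i_1}$ completes the step.

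The main obstacle I anticipate is the phase determination $c = 1$ in the braid identity, especially in the non-simply-laced cases $B_2$, $F_4$, and $G_2$, where the braid has length $4$ or $6$ and the quantum dilogarithms at both parameters $b_l$ and $b_s$ appear interleaved through the transcendental duality of the modular double. Tracking the $\overline{\zeta_b}$ phases that arise from the asymptotics in the definition of $G_b$, and showing that they cancel between the two sides of \eqref{coxeter}, is the delicate bookkeeping step; once accomplished, the rest of the argument is a routine application of the $q$-dilogarithm identities collected in Section \ref{sec:prelim:qdlog}.
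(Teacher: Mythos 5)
First, a point of calibration: the paper does not prove this proposition at all --- it is imported wholesale from Lusztig \cite{Lu1, Lu2} and from the construction of the unitary operators $T_i$ in \cite{Ip4} --- so your attempt can only be judged on its own terms. For the braid relation \eqref{coxeter}, your outline (the two sides induce the same automorphism, hence by irreducibility of $\cP_\l$ differ by a phase, which is then computed to be $1$ in rank two) is a legitimate strategy and close to how the operator identity is actually verified in \cite{Ip4}. Two caveats: Theorem \ref{LusThm} only specifies $\mathrm{Ad}_{T_i}$ on the \emph{simple} generators, and to see that $\mathrm{Ad}_L=\mathrm{Ad}_R$ on a generator $\be_k$ with $k$ adjacent to $i$ or $j$ you must pass through non-simple root vectors such as $\be_{ik}$, i.e.\ you are really invoking that conjugation by $T_i$ realizes Lusztig's automorphism on the whole algebra, not just on the list in Theorem \ref{LusThm}; this should be stated. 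Also, your anticipated difficulty with $\overline{\ze_b}$ phases is misplaced: $g_b$ of a positive operator is exactly unitary and the pentagon identity \eqref{qpenta} holds with no anomalous scalar; the delicate bookkeeping sits in the Gaussian and linear exponential prefactors of the $T_i$, not in asymptotic constants of $G_b$.

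The second part of your argument has a genuine gap. Your induction step sets $\a_{i'}:=w'(\a_i)$ and then treats $\a_{i'}$ as a simple root so that the inductive hypothesis $T_{i_2}\cdots T_{i_k}(X_i)=X_{i'}$ makes sense. But $w'(\a_i)$ need not be simple even when $w(\a_i)$ is: already in type $A_2$, take $w=s_1s_2$, for which $w(\a_1)=\a_2$, while $w'=s_2$ gives $s_2(\a_1)=\a_1+\a_2$. At that intermediate stage there is no index $i'$, the statement being proven does not apply to $w'$, and the case analysis (``$i_1=i'$ or $i_1,i'$ adjacent'') collapses. Any correct proof must pass through non-simple root vectors. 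The standard route is: (a) show by induction that $w(\a_i)>0$ implies $T_w(\be_i)$ lies in the (completed) upper Borel part, with weight $w(\a_i)$; (b) when $w(\a_i)=\a_j$, use that the $\a_j$-weight space is one-dimensional to conclude $T_w(\be_i)=c\,\be_j$ and similarly $T_w(\bf_i)=c'\bf_j$, while $T_w(K_i)=K_{w(\a_i)}=K_j$ follows directly from $T_i(K_\mu)=K_{s_i\mu}$; (c) pin down $cc'=1$ from the relation $[\be_i,\bf_i]\propto K_i-K_i^{-1}$ and then $c=c'=1$ by a normalization argument (integrality in the algebraic setting; here one can use positivity and self-adjointness of $T_w(\be_i)$ and $T_w(\bf_i)$ together with the transcendental relations to exclude $c\neq 1$). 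Alternatively, $w(\a_i)=\a_j$ gives $ws_i=s_jw$ with both sides reduced, whence $T_wT_i=T_jT_w$ by part one, but even then step (b) is not avoidable. As written, your induction does not close.
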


In this paper we will use instead the inverse version of the following definition.
\begin{Def} Let $w_0=s_{i_1}s_{i_2}...s_{i_N}$ be the longest element of $W$. We define
\Eq{\be_{\a_k}:=T_{i_1}\inv T_{i_2}\inv... T_{i_{k-1}}\inv \be_{i_k}.\label{eak}}
These are all positive essentially self-adjoint.
\end{Def}

Finally we record some commutation relations among the non-simple generators.
\begin{Prop}\label{nonsimple_rel}For the simply-laced case, if $a_{ij}=-1$, we have
\Eq{\be_i\be_{ij}&= q\inv\be_{ij}\be_i, \tab \be_j\be_{ij}=q\be_{ij}\be_j, \\
\be_i\be_j^n &= q^n\be_j^n\be_i + q^{\half}(q^{-n}-q^{n})\be_j^{n-1}\be_{ij}.\label{eijcom}}
\end{Prop}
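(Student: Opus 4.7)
The plan is to reduce everything to the rescaled Serre relation and then bootstrap.

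\textbf{Setup.} Starting from the Serre relation \eqref{SerreE} with $a_{ij}=-1$ in the simply-laced case, $E_i^2 E_j - (q+q\inv) E_i E_j E_i + E_j E_i^2 = 0$, and since $\be_i = 2\sin(\pi b^2) E_i$ differs from $E_i$ only by a common positive scalar, the rescaled generators satisfy the same identity:
\[
\be_i^2\be_j - (q+q\inv)\be_i\be_j\be_i + \be_j\be_i^2 = 0. \tag{$\ast$}
\]
By definition, $(q-q\inv)\be_{ij} = q^{1/2}\be_j\be_i - q^{-1/2}\be_i\be_j$, so every commutation between $\be_{ij}$ and $\be_i$ or $\be_j$ can be expanded and compared against $(\ast)$.

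\textbf{Step 1: The relations $\be_i\be_{ij}=q\inv\be_{ij}\be_i$ and $\be_j\be_{ij}=q\be_{ij}\be_j$.} I would simply compute $(q-q\inv)(\be_i\be_{ij} - q\inv\be_{ij}\be_i)$ directly. After expansion, the cross terms collect as $q^{-1/2}\bigl[(q+q\inv)\be_i\be_j\be_i - \be_i^2\be_j - \be_j\be_i^2\bigr]$, which vanishes by $(\ast)$. The second relation follows from an entirely parallel computation on the other side, again reducing to $(\ast)$.

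\textbf{Step 2: The formula for $\be_i\be_j^n$.} I would proceed by induction on $n\geq 1$. The base case $n=1$ is just a rearrangement of the definition of $\be_{ij}$: solving $(q-q\inv)\be_{ij}= q^{1/2}\be_j\be_i - q^{-1/2}\be_i\be_j$ for $\be_i\be_j$ gives $\be_i\be_j = q\be_j\be_i + q^{1/2}(q\inv-q)\be_{ij}$, which is the claimed identity with $n=1$. For the inductive step, write $\be_i\be_j^{n+1} = (\be_i\be_j^n)\be_j$, apply the induction hypothesis, then push the trailing $\be_j$ to the left using the base case $\be_i\be_j = q\be_j\be_i + q^{1/2}(q\inv-q)\be_{ij}$ and the commutation $\be_{ij}\be_j = q\inv\be_j\be_{ij}$ proved in Step 1. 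Collecting the coefficient of $\be_j^n\be_{ij}$ gives $q^{n+1/2}(q\inv-q) + q^{-1/2}(q^{-n}-q^n) = q^{1/2}(q^{-(n+1)} - q^{n+1})$, closing the induction.

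\textbf{Main obstacle.} There is no serious obstacle — the argument is purely algebraic, and the only nontrivial input is the Serre relation $(\ast)$. The one place where care is needed is the appeal to functional analysis implicit in Step 1: the displayed relations hold as operator identities on a common core, not just formally, because the operators $\be_i, \be_j, \be_{ij}$ are all positive essentially self-adjoint by Proposition \ref{eij} and the Serre relation itself holds on this common domain (as already established in the construction of the positive representations).
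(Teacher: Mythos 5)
Your proof is correct, and it is the natural argument; the paper itself states this proposition without proof, recalling it from \cite{Ip4}, and the verification there is of the same nature (expand the $q$-commutator defining $\be_{ij}$ and invoke the quantum Serre relation). Two small remarks. First, the relation $\be_j\be_{ij}=q\be_{ij}\be_j$ reduces not to $(\ast)$ itself but to the Serre relation with $i$ and $j$ interchanged, $\be_j^2\be_i-(q+q\inv)\be_j\be_i\be_j+\be_i\be_j^2=0$, which is equally available since $a_{ji}=-1$ in the simply-laced case, so this is only an imprecision of wording. Second, your induction establishes \eqref{eijcom} for positive integers $n$ only, whereas the paper later applies it with $n=\bi b\inv w$ a complex power; that extension is explicitly delegated to a separate functional-calculus result (\cite[Proposition 6.8]{Ip4}) in the remark following Proposition \ref{nonsimple_rel2}, so it is not part of the statement you were asked to prove, but it is worth being aware that the integer-power identity is not the form ultimately used.
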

\begin{Prop}\label{nonsimple_rel2}For doubly-laced case, let $a_{ij}=-2$. Denote $\be_{ij}:=T_i\be_j$ and $\be_X:=T_iT_j\be_i$. We have
\Eq{
\be_i\be_{ij}&=q\inv \be_{ij}\be_i,\tab \be_{ij}\be_X=q\inv \be_X\be_{ij},\tab \be_X\be_j =q\inv\be_j\be_X\\
\be_i^n\be_j&=q^{-n}\be_j \be_i^n +q^{\frac{1}{2}}(q^n-q^{-n}) \be_i^{n-1}\be_X,\\
\be_i^n\be_X&=\be_X\be_i^n+q^{\frac{1}{2}}(q^{-n}-1)\be_{ij}\be_i^{n-1},\\
\be_{ij}^n\be_j&=q^n\be_j\be_{ij}+(q^{-2n}-1)\be_X^2.
}
\end{Prop}
Note that by means of \cite[Prop. 6.8]{Ip4}, we can take $n$ to be a complex power.
\subsection{GNS representation, multiplicative unitary and multiplier Hopf algebra}\label{sec:GNS}
Finally, we recall the definition of Gelfand-Naimark-Segal (GNS) representation of a $C^*$-algebra, and its corresponding unitary operator called the \emph{multiplicative unitary} that is needed in this paper. These are fundamental to the theory of locally compact quantum groups in the setting of $C^*$-algebras and von Neumann algebras, and to the generalization of Pontryagin duality. In particular, a multiplicative unitary encodes all the structure maps of a quantum group and its dual. See \cite{Ip1}, \cite{KV1} or \cite{Tim} for more discussions.

\begin{Def} A Gelfand-Naimark-Segal (GNS) representation of a $C^*$-algebra $\cA$ with a weight $\phi$ is a triple
$(\cH, \pi, \L),$ where $\cH$ is a Hilbert space, $\L:\cA\to\cH$ is a linear map, and $\pi:\cA\to\cB(\cH)$ is a representation of $\cA$ on $\cH$ as bounded linear operators, such that $\L(\cN)$ is dense in $\cH$, where $\cN=\{a\in\cA: \phi(a^*a)<\oo\}$ and
\begin{eqnarray}\label{right}
\pi(a)\L(b)&=&\L(ab) \tab\forall a\in\cA,b\in \cN,\\
\<\L(a),\L(b)\>&=&\phi(b^*a)\tab\forall a,b\in\cN.
\end{eqnarray}
\end{Def}

\begin{Def}\cite{BS} Let $\cH$ be a Hilbert space. A unitary operator $W\in \cB(\cH\ox \cH)$ is called a \emph{multiplicative unitary} if it satisfies the pentagon equation
\Eq{W_{23}W_{12}=W_{12}W_{13}W_{23},}
where the standard leg notation is used.
\end{Def}

Given a GNS representation $(\cH,\pi, \L)$ of a locally compact quantum group $\cA$, we can define a unitary operator on $\cH\ox \cH$ by \Eq{\label{WW}W^*(\L(a)\ox \L(b)) := (\L\ox \L)(\D(b)(a\ox 1)).}

It is known that $W$ is a multiplicative unitary \cite[Thm 3.16, Thm 3.18]{KV1}, and the coproduct on $\cA$ defining it can be recovered from $W$:
\begin{Prop}\label{WD} Let $x\in\cA \inj \cB(\cH)$ as operator. Then 
\Eq{W^*(1\ox x)W = \D(x)} as operators on $\cH\ox \cH$.
\end{Prop}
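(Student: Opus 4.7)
The plan is to verify the operator identity by testing both sides on a dense subspace of $\cH \ox \cH$ and then extending by boundedness. First I would consider vectors of the form $\eta = (\L\ox\L)(\D(b)(a\ox 1))$ for $a, b \in \cN$. By the defining relation \eqref{WW}, these are precisely the images of the product vectors $\L(a)\ox\L(b)$ under $W^*$; since $W$ is unitary and $\L(\cN)\ox\L(\cN)$ is dense in $\cH\ox\cH$, such $\eta$ span a dense subspace, which is the natural place to test the identity.

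Next I would compute both sides on such an $\eta$. For the left side, applying \eqref{WW} backwards gives $W\eta = \L(a)\ox\L(b)$, the GNS intertwining property \eqref{right} gives $\pi(x)\L(b) = \L(xb)$, and applying \eqref{WW} once more produces
$$W^*(1\ox\pi(x))W\eta \;=\; W^*(\L(a)\ox \L(xb)) \;=\; (\L\ox\L)(\D(xb)(a\ox 1)).$$
For the right side, using that $\D$ is a $*$-homomorphism and that $(\pi\ox\pi)$ acts on vectors of the form $(\L\ox\L)(z)$ by left multiplication on the argument, I would compute
$$(\pi\ox\pi)\D(x)\cdot\eta \;=\; (\L\ox\L)\bigl(\D(x)\D(b)(a\ox 1)\bigr) \;=\; (\L\ox\L)(\D(xb)(a\ox 1)),$$
which matches the left side verbatim.

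The main technical obstacle is justifying that these manipulations take place within a well-defined algebraic framework. One needs to check that $\D(b)(a\ox 1)$ actually lies in a domain where $\L\ox\L$ is defined, and that the rearrangement $\D(x)\cdot\D(b)(a\ox 1) = \D(xb)\cdot(a\ox 1)$ is legitimate inside the appropriate multiplier-algebra completion of $\cA\ox\cA$. These are standard but nontrivial points in the locally compact quantum group setup of \cite{KV1, Tim}, and essentially amount to the very compatibility conditions that make $W$ a well-defined multiplicative unitary in the first place. Finally, extension of the identity from the dense subspace to all of $\cH\ox\cH$ is immediate because both operators are bounded: $W^*(1\ox\pi(x))W$ by unitarity of $W$ together with boundedness of $\pi(x)\in\cB(\cH)$, and $(\pi\ox\pi)\D(x)$ because $\D(x)\in M(\cA\ox\cA)$ and the representation $\pi\ox\pi$ extends to this multiplier algebra.
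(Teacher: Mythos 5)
Your proposal is correct and is essentially the paper's own argument: the paper proves the equivalent identity $W^*(1\ox x)=\D(x)W^*$ on product vectors $\L(f)\ox\L(g)$ using exactly the chain $\pi(x)\L(g)=\L(xg)$, the defining relation for $W^*$, and multiplicativity of $\D$, which is the same computation you perform after conjugating by $W$ and testing on $W^*(\L(a)\ox\L(b))$. Your additional remarks on density and on the multiplier-algebra domain issues are sensible refinements of the same route rather than a different proof.
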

\begin{proof}See \cite[Prop 2.18]{Ip1} 
\end{proof}
In particular, Proposition \ref{WD} provides a unitary equivalence 
\Eq{W: \cH\ox \cH \simeq M\ox \cH}
of the tensor product of the GNS representations, where $M$ is the multiplicity module equipped with the trivial action. This is the key step to proving the tensor product decomposition of the positive representations of $\cU_{q\til{q}}(\fb_\R)$.

Finally, we recall the definition of a multiplier Hopf algebra.
\begin{Def}
The multiplier algebra $M(\cA)$ of a $C^*$-algebra $\cA\subset \cB(\cH)$ is the $C^*$-algebra of operators
\Eq{M(\cA)=\{b\in \cB(\cH): b\cA\subset \cA, \cA b\subset \cA\}.}
In particular, $\cA$ is an ideal of $M(\cA)$.
\end{Def}
\begin{Def}\label{mHa}
A multiplier Hopf *-algebra is a $C^*$-algebra $\cA$ together with the antipode $S$, the counit $\e$, and the coproduct map
\Eq{\D:\cA\to M(\cA\ox \cA),}
all of which can be extended to a map from $M(\cA)$, such that the usual properties of a Hopf algebra holds on the level of $M(\cA)$.
\end{Def}
\section{Positive representations restricted to Borel part}\label{sec:borel}
In the case of $\g=\sl_2$, the Borel subalgebra $\cU_{q\til{q}}(\fb_\R)$ is generated by the operator $K$ and $\be$ satisfying the quantum plane relation $K\be=q^2\be K$, in the sense of integrable representations \cite{Sch}, i.e.
\Eq{K^{\bi b\inv s}\be^{\bi b\inv t} = e^{-2\pi ist}\be^{\bi b\inv t}K^{\bi b\inv s},\tab \forall s,t\in \R}
as unitary operators. Thus the positive representation $\cP_\l\simeq L^2(\R)$ restricted to the Borel part is nothing but an irreducible representation of the quantum plane, such that the operators are represented by positive self-adjoint operators. By Stone-von Neumann's Theorem, we know that every such representation is unitary equivalent to the canonical representation $\cH\simeq L^2(\R)$ given by 
\Eq{K=e^{-2\pi b x},\tab \be=e^{-2\pi bp}.} 
In particular, the representation does not depend on any parameters. In fact, from Proposition \ref{canonicalsl2}, it is easily seen that $\Phi: \cP_\l\to \cH$ given by multiplication by
\Eq{\Phi = e^{-\frac{\pi \bi (x-\l)^2}{2}}g_b(e^{-2\pi b(x-\l)})}
is the required unitary equivalence. This result also holds in the higher rank case:

\begin{Thm} \label{removeL}In the higher rank case, we have \Eq{\cP_{\l}\simeq \cP_{\l'}} as representations of $\cU_{q\til{q}}(\fb_\R)$ for any $\l,\l'\in \R^{rank(\g)}$.
\end{Thm}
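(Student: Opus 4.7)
The plan is to construct an explicit unitary intertwiner $\Phi_{\l,\l'}\colon\cP_\l\to\cP_{\l'}$ acting on $L^2(\R^N)$. The starting observation, from Theorem \ref{FKaction}, is that in the canonical form the generators $\be_i$ of the Borel subalgebra are independent of $\l$, whereas $K_i=e\!\left(-\sum_k a_{i,r(k)}v_k+2\l_i\right)$ depends on $\l$ only through an additive constant shift $+2\l_i$ in the exponent. In particular $K_i(\l)$ and $K_i(\l')$ differ only by a positive scalar factor $e^{2\pi b(\l_i-\l_i')}$, so they share the same spectrum $(0,\infty)$; a unitary equivalence implementing the rescaling exists in principle, and the task is to make it explicit while intertwining all the $\be_i$'s.

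The first step is to apply a translation $U_c=e^{-2\pi\bi\sum_k c_k p_{v_k}}$, choosing constants $(c_k)$ that solve the linear system $\sum_k a_{i,r(k)}c_k=2(\l_i-\l_i')$ indexed by simple roots $i$. Because every simple reflection appears at least once in any reduced expression of $w_0$, the coefficient matrix has full column rank and solutions exist (for instance, restricting to the last occurrence of each reflection gives an invertible $n\times n$ submatrix). After this translation $K_i(\l)\mapsto K_i(\l')$ as required, but the arguments inside the $[\cdot]$ brackets of $\be_i$ are also shifted by constants, so the $\be_i$'s no longer agree with the canonical form.

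The second step is to correct the shifted $\be_i$'s by conjugating with a product of Gaussians and quantum dilogarithms $g_b$, modeled on the rank-one prototype from the preamble: the unitary $\Phi=e^{-\pi\bi(x-\l)^2/2}g_b(e^{-2\pi b(x-\l)})$ transforms the rank-one $\cP_\l$ onto the canonical representation $\cH$, and each simple reflection in $w_0$ should contribute a local correction of this type. The $g_b$-conjugation identities of Lemma \ref{qsum}, in particular $g_b(U)^*Vg_b(U)=q^{-1}UV+V$ when $UV=q^2VU$, are the tools to absorb the residual shifts in each $\be_i$ one simple root at a time.

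The main obstacle is this last step: arranging the local $g_b$-corrections across different simple roots so that all $\be_i$'s are simultaneously restored to canonical form. A local correction for $s_i$ in general fails to commute with $\be_j$ for adjacent $j$, and spurious cross-terms arise through the rank-two commutation relations of Propositions \ref{nonsimple_rel}--\ref{nonsimple_rel2}. I expect this to be manageable by an induction along the length of $w_0$, using the Lusztig isomorphisms $T_i$ (Theorem \ref{LusThm}, Proposition \ref{LuCox}) to rewrite the representation after each correction and reduce to a shorter reduced expression, with the preamble's $A_1$ computation serving as the base case.
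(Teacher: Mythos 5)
Your opening observation is the right one: in the canonical form of Theorem \ref{FKaction} the $\be_i$ do not involve $\l$ at all, and each $K_i$ changes only by the positive scalar $e^{2\pi b(\l_i-\l_i')}$, so the entire content of the theorem is to produce a unitary that rescales the $K_i$ while commuting with every $\be_j$. Your proposal does not actually produce one. Step one (the translation $U_c$) trades the problem for an equally hard one: it fixes the $K_i$ but shifts every bracket in every $\be_i$ by a constant, and $[X+c]e(-2p_Y)=e^{\pi bc}e(X-2p_Y)+e^{-\pi bc}e(-X-2p_Y)$ is \emph{not} a scalar multiple of $[X]e(-2p_Y)$, so no further translation in position or momentum can undo it. Step two, which you yourself flag as ``the main obstacle,'' is exactly where the theorem lives: each $\be_i$ is a sum of several mutually non-commuting bracket terms, each shifted by a different constant; the identities of Lemma \ref{qsum} merge or split terms rather than strip an additive constant out of a single bracket; and a correction attached to one simple root disturbs the adjacent $\be_j$ through the relations of Propositions \ref{nonsimple_rel}--\ref{nonsimple_rel2}. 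The induction along $w_0$ via the $T_i$ is offered as a hope, not an argument, so the proof is incomplete at its only nontrivial point.

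The paper's proof sidesteps this entirely by invoking a result you did not use: by the Weyl-group symmetry $\cP_\l\simeq\cP_{w(\l)}$ established in \cite{Ip2} (Section 11) and \cite{Ip3} (Section 9), there exist unitaries $B_i$ intertwining $\cP_\l$ and $\cP_{s_i(\l)}$ for the \emph{full} quantum group; since the $\be_j$ do not see $\l$, each $B_i$ automatically commutes with all of them and multiplies $K_j$ by the scalar $e^{a_{ij}\pi b\l_i}$. Once the $\bf_j$ are discarded, the same construction goes through with $\l_i$ replaced by an \emph{arbitrary} constant $c_i$, and the invertibility of the Cartan matrix lets one solve $\sum_i a_{ij}c_i=\l_j$ so that all parameters cancel, giving $\cP_\l\simeq\cP_{\vec[0]}$. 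If you want to salvage your route, the realistic fix is to quote (or reprove) the existence of these $B_i$ rather than rebuild the intertwiner from translations and local $g_b$-corrections.
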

\begin{proof}
From \cite[Section 11]{Ip2} for the simply-laced case, or \cite[Section 9]{Ip3} for non-simply-laced case, we know that $\cP_\l\simeq \cP_{w(\l)},$ where $w\in W$ is any Weyl group element acting on $\l$, namely for simple reflections,
\Eq{s_i(\l_j) := \l_j - a_{ij}\l_i,} 
where $a_{ij}$ is the Cartan matrix. In particular, this means that for each root $i\in I$, there exists a unitary transformation $B_i$ such that it fixes the action of $\be_i$, but modify the action of $K_j$ by 
$$K_j \mapsto K_je^{a_{ij}\pi b \l_i}.$$
However, when we restrict to the Borel subalgebra, we are released from the restriction of the action of the lower Borel generated by $\bf_i$, in which the proof above allows us to define the unitary transformation $B_i$ such that it fixes the action of $\be_i$ but modify the action of $K_i$ by \emph{an arbitrary constant} $c_i$:
$$K_j \mapsto K_je^{a_{ij}\pi b c_i}.$$
Therefore, it suffices to solve the equation $\sum a_{ij} c_i = \l_i$ in order for all $\l_i$ in the representation to cancel. This clearly can be done because the Cartan matrix is invertible. In particular, all representations $\cP_\l$ are unitary equivalent to $\cP_{\vec[0]}$.

\end{proof}

However, we observe that $\cP_{\vec[0]}$ is not irreducible in general. For example, in the case of type $A_2$, the positive operator $K_1^{\frac23}K_2^{-\frac23}\be_{12}\inv\be_{21}$ commutes with all the generators, hence $\cP_{\vec[0]}$ decomposes as a direct integral $\int_{\R_{>0}}^\o+ P_s ds$ with respect to the spectrum $s\in\R_{>0}$ of this central operator, where each $P_s\simeq L^2(\R^2)$ is irreducible. 

\begin{Con} \label{SvNThm}Any irreducible representations of $\cU_{q\til{q}}(\fb_\R)$ satisfying the conditions of a positive representation acts on $P\simeq L^2(\R^{rank(\g)})$, and is unitary equivalent to a direct integral summand of $\cP_{\vec[0]}$. More specifically, using the notion of ``integrable representation" of quantum plane \cite{Sch}, we require as unitary operators
$$K_i^{\bi b\inv s}\be_j^{\bi b\inv t} = e^{-\pi a_{ij}st}\be_j^{\bi b\inv t}K_i^{\bi b\inv s}\tab \forall s,t\in\R,$$
and in the simply-laced case,
$$\be_i^{\bi b\inv s}\be_{ij}^{\bi b\inv t} = e^{\pi st}\be_{ij}^{\bi b\inv t}\be_i^{\bi b\inv s}\tab \forall s,t\in\R,$$
$$\be_j^{\bi b\inv s}\be_{ij}^{\bi b\inv t} = e^{-\pi st}\be_{ij}^{\bi b\inv t}\be_j^{\bi b\inv s}\tab \forall s,t\in\R,$$
which corresponds to the Serre's relations. We have similar equations for the non-simply-laced case (cf. Proposition \ref{nonsimple_rel2}). This generalizes the Stone von-Neumann Theorem to the case of Borel subalgebra of a split real quantum group. 
\end{Con}

\section{GNS representation of $\cU_{q\til{q}}^{C^*}(\fb_\R)$}\label{sec:GNS}
In this section, we recall the definition of $\cU_{q\til{q}}^{C^*}(\fb_\R)$ and construct the GNS-representation
by left multiplication. We show that it is equivalent to a positive representation.
\subsection{Definition of $\cU_{q\til{q}}^{C^*}(\fb_\R)$}\label{sec:defcstar}
Let us denote by $\cP:=\cP_{\vec[0]}$. In \cite{Ip4}, we used the language of multiplier Hopf algebra, and defined a $C^*$-algebraic version of the Borel subalgebra using a continuous basis. Explicitly, consider the action of $\cU_{q\til{q}}(\fb_\R)$ on $\cP_\l\simeq \cP\simeq L^2(\R^N)$, where $N=l(w_0)$. 
\begin{Def} \label{Ub}
Let $n=rank(\g)$. We define the $C^*$-algebraic version of the Borel subalgebra $\bU\bb:=\cU_{q\til{q}}^{C^*}(\fb_\R)$ as the operator norm closure of the linear span of all bounded operators on $L^2(\R^N)$ of the form
\Eq{\label{vecF}\vec[F]:= \left(\prod_{k=1}^N  \int_C \frac{F_k(t_k)}{G_{b_{i_k}}(Q_{i_k}+\bi t_k)} \be_{\a_k}^{\bi b_{i_k}\inv t_k}dt_k\right)\left(\prod_{i=1}^n \int_\R \what{F}_i(s_i)K_i^{\bi b\inv s_i}ds_i\right),}
where $\be_{\a_k}$ is given by \eqref{eak} with the order of multiplication as $\be_{\a_1}...\be_{\a_N}$, $F_k(t_k)$ and $\what{F}_i(s_i)$ are both entire analytic functions that rapidly decay along the real direction, i.e. for fixed $y_0$, $F_k(x+\bi y_0)$ decays faster than any exponential function in $x$. Finally the contour $C$ goes along the real axis and above the pole of $G_b$ at $t_k=0$.
\end{Def}
It was shown in \cite{Ip4} that $\bU\bb$ is a multiplier Hopf algebra (cf. Definition \ref{mHa}).

\begin{Rem} We modified the definition slightly from \cite{Ip4}. We have used the opposite multiplication order to be consistent with the definition of $\be_{\a_k}$ using $T_{i_k}\inv$ (cf. \eqref{eak}) instead of $T_{i_k}$. The two definitions are identical. Moreover, to simplify calculations, we have used the functions $\what{F}_i(s_i)$ parametrized by the powers of $K_i$ instead of a general compact function $F_0(\bH)$ considered in \cite{Ip4}, where $K_i = q_i^{H_i}$. These are simply related by Fourier transform $\cF$:
\Eqn{\int (\cF \what{F})(s) K^{\bi b\inv s} ds= \iint e^{-2\pi\bi s t} \what{F}(t) K^{\bi b\inv s} dt ds= \what{F}(\frac{1}{2}\bi bH).}
\end{Rem}

\subsection{GNS-representation}\label{sec:GNSrep}
It turns out the definition above does not necessarily define a positive action. In order to obtain an action of $\cU_{q\til{q}}(\fb_\R)$ that is positive, we modify $\vec[F]\in \cA$ as follows.
\begin{Def}\label{Ubnew} We define the GNS representation $(\cH, \pi, \L)$ of $\cA:=\cU_{q\til{q}}^{C^*}(\fb_\R)$ on $\cH\simeq L^2(\R^{N+n})$ as follows. We let
\Eq{\vec[F]:=\prod_{k=1}^N  \left( \int_C \frac{F_k(t_k)}{S_{b_{i_k}}(Q_{i_k}+\bi t_k)} \be_{\a_k}^{\bi b_{i_k}\inv t_k} dt_k K_{\a_k}^{\frac{Q}{2b}}\right)\left(\prod_{i=1}^n \int_\R \what{F}_i(s_i)K_i^{\bi b\inv s_i}e^{\pi Q s_i}ds_i\right),}
and define the representation $\L:\cA\to \cH$ by
\Eq{
\vec[F]&\mapsto\prod_{k=1}^N F_k(t_k)\prod_{i=1}^n \what{F}_i(s_i)}
and extend by linearity. Finally, the $L^2$-norm of $\cH$ (cf. \cite[Thm 4.9]{Ip1}) will be defined for each variable as
\Eq{\|F(..., t_k,...)\|^2:=& \int_{\R^N} \left|F\left(..., t_k+\frac{\bi Q_{i_k}}{2},...\right)\right|^2dt_1...dt_N,\label{haar}\\
\|\what{F}(..., s_i, ...)\|^2:=& \int_{\R^n} |\what{F}(..., s_i, ...)|^2 ds_1...ds_n.}
\end{Def}

Note that we have used $S_b(x)$ (cf. Definition \ref{Sb}) instead in the denominator. Moreover we slipped $K_{\a_k}$ in between the products of $\be_{\a_k}$, this is well defined since $K_i^{\frac{Q}{2b}}$ is in the multiplier $M(\cA)$ of $\cA$. Finally the $L^2$-norm \eqref{haar} can be thought of as shifting the arguments by $\frac{\bi Q}{2}$ and use the usual $L^2$-norm. More precisely,
\Eq{\cH&\to L^2(\R^{N+n}, dt_1...dt_Nds_1...ds_n)\nonumber\\
F(..., t_k, ...)&\mapsto F'(..., t_k, ...) := F\left(..., t_k+\frac{\bi Q_{i_k}}{2}, ...\right)}
is an isomorphism of Hilbert space. Therefore, to obtain a representation on the usual $L^2$-norm, we work on $F$, and at the end shift all the variables $t_k$ by this map.

\begin{Prop} \label{ignoreS}The GNS representation is unitary equivalent to a representation that is independent on the $s_i$ variables.
\end{Prop}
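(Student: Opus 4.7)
The plan is to exhibit an explicit unitary on $\cH=L^2(\R^{N+n})$ that transports the GNS representation into one where no operator involves the $s_i$ variables. Since elements of $\cA$ are built from the operators $K_i^{\bi b\inv s_i}e^{\pi Qs_i}$ and $\be_{\a_k}^{\bi b_{i_k}\inv t_k}$ through integrations (which preserve the form $A\ox 1$), it suffices to remove the $s$-dependence from the action of each such generator.

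First I would compute the left-multiplication actions explicitly. Using $K_i\be_{\a_k}=q^{(\a_i,\a_k)}\be_{\a_k}K_i$ (extended to complex powers via positivity and functional calculus), pushing $K_i^{\bi b\inv v}$ through $\vec[G]$ produces the phase $\prod_k e^{-\pi\bi d_{ik}vt_k}$ with $d_{ik}=bb_{i_k}\inv(\a_i,\a_k)$, and combining with the existing $K_i^{\bi b\inv s_i}$-factor translates $s_i\mapsto s_i+v$; the prefactor $e^{\pi Qv}$ together with the $\bi Q_{i_k}/2$-shift built into \eqref{haar} then make the operator unitary for real $v$. For every $\be_{\a_k}^{\bi b\inv u}$, prepending uses only the commutations among the $\be_{\a_j}$'s (Propositions \ref{nonsimple_rel}-\ref{nonsimple_rel2}), in which no Cartan generator appears; hence the action is an operator on the $t$-variables alone, composed of translations in some of the $t_j$'s and multiplication by $S_b$-ratios only in those $t_j$'s.

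Next I would apply the composite unitary $U=M_g\circ U_2\circ U_1$. Here $U_1=\cF_s$ is the partial Fourier transform $s_i\mapsto y_i$, turning the $s_i$-translation in $K_i^{\bi b\inv v}$ into multiplication by $e^{-2\pi\bi vy_i}$, so that $K_i^{\bi b\inv v}$ becomes pure multiplication by $e^{-\pi\bi v(\sum_k d_{ik}t_k+2y_i)}$. Next $U_2$ is the linear change of variables $u_k=t_k+\sum_jC_{kj}y_j$, $v_j=y_j$ with $(C_{kj})$ solving $\sum_kd_{ik}C_{kj}=2\d_{ij}$; the system is solvable because every simple root appears among $\{\a_k\}_{k=1}^N$, making the restriction of $(d_{ik})$ to the simple-root columns a nonzero scalar multiple of the (invertible) symmetrized Cartan matrix. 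In the new coordinates $\sum_k d_{ik}t_k+2y_i$ collapses to $\sum_k d_{ik}u_k$, so $K_i$ depends only on $\vec[u]$; simultaneously every $S_b$-ratio in the $\be_{\a_k}$-action acquires a spurious $\vec[v]$-dependence through $t_j=u_j-C_j(\vec[v])$. Finally $U_3=M_g$ is multiplication by the unitary $g=\prod_j S_{b_{i_j}}(Q_{i_j}/2+\bi(u_j-C_j(\vec[v])))\inv$ (of modulus $1$ by \eqref{gb1}); conjugation by $M_g$ telescopes each $S_b$-ratio away, leaving $\be_{\a_k}$ as pure translations in the $u$-variables, while $K_i$, being already a pure multiplication in $u$, commutes with $M_g$ and is unaffected.

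Under $U$, every generator acts as $(\cdot)\ox 1$ on $L^2(\R^N_{\vec[u]})\ox L^2(\R^n_{\vec[v]})$, yielding the desired representation. The most delicate point is the verification of the telescoping in the $M_g$-step for non-simple $\be_{\a_k}$: after unwinding the commutations of Propositions \ref{nonsimple_rel}-\ref{nonsimple_rel2}, one obtains an integral of products of translations and $S_b$-multiplications in the $t$-variables, each with the same $C_j(\vec[v])$ shift after $U_2$, so the single $M_g$ handles them all at once.
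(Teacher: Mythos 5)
Your overall strategy -- Fourier transform in $s$, then try to eliminate the resulting $y$-dependence by an explicit elementary unitary -- diverges from the paper at the crucial last step, and that step has a genuine gap. The paper's proof is much softer: it observes that left multiplication by $\be_i$ never touches $s_i$, that left multiplication by $K_i$ contributes only the commuting translation factor $e^{-2\pi b p_{s_i}}$, and that therefore the $p_{s_i}$ (after diagonalization) play exactly the role of the parameters $\l_i$ in $\cP_\l$; it then invokes the identification $\cP_{GNS}\simeq\cP_\l$ of Section \ref{sec:equirep} together with Theorem \ref{removeL} to remove the $\l$-dependence. The point is that the unitaries $B_i$ used in Theorem \ref{removeL} are the nontrivial Weyl-group intertwiners of \cite{Ip2, Ip3}, built from quantum dilogarithms $g_b$ of operator arguments; they are \emph{not} realizable as a linear change of variables composed with a diagonal multiplication operator.

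Concretely, your $U_2\circ M_g$ step fails already for $A_2$. After the partial Fourier transform, $K_1=e(-2u-v+w+c_1y_1)$ and $K_2=e(u-v-2w+c_2y_2)$, while $\be_1$ contains the pair of terms $e(-w+v+u-2p_u)+e(-w+v-u-2p_u)$, which share the \emph{same} shift $e(-2p_u)$ but have \emph{different} linear parts, and likewise the pair $e(\pm v-2p_v+2p_w)$. Conjugation by any multiplication operator multiplies two terms with identical shift structure by the identical ratio $g/g(\cdot+\bi b)$, so it can cancel the spurious $y$-dependence of both terms of a pair only if the substitution does not translate $u$ (respectively $v$) by a function of $y$ at all. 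Imposing $a_u=a_v=0$ leaves only a translation of $w$, and the two conditions from $K_1$ and $K_2$ then force $a_w(y)=-c_1y_1$ and $a_w(y)=\tfrac{1}{2}c_2y_2$ simultaneously -- a contradiction. (Separately, your choice of $g$ as a product of single-variable $S_b$'s cannot address terms such as $e(-w+v+u-2p_u)$ whose multiplication factor involves variables other than the shifted one, and ``telescoping away'' the $S_b$-ratios would in any case destroy the quantum brackets $[\,\cdot\,]$ that are essential to the representation.) To repair the argument you must replace $U_2\circ M_g$ by the genuine intertwiners $B_i$, i.e.\ reduce to Theorem \ref{removeL} as the paper does.
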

\begin{proof}
The action of multiplication by $\be_i$ from the left will not depend on $s_i$. Moreover, multiplication of $K_i$ from the left, after commuting with all the $\be_{\a_k}$, give us an extra factor $e^{-2\pi bp_{s_i}}$, which is independent from the $t_i$ variables. Therefore these $p_{s_i}$ can be treated as constants $\l_i$. Once we have shown that the $t_k$ part is unitary equivalent to the representation $\cP$ in Section \ref{sec:equirep}, by Proposition \ref{removeL} there exists a unitary transformation that can remove the dependence on $s_i$ completely. 
\end{proof}
In particular the GNS representation is unitary equivalent to
\Eq{\cH = L^2(\R^{N+n}) \simeq \cP \ox M_K\label{MK}}
for a trivial multiplicity module $M_K\simeq L^2(\R^n)$. Therefore without loss of generality, we will ignore the $K_i^{\bi b\inv s_i}$ from now on, and the GNS representation calculated below will assume to act trivially on the $s_i$ variables.
Note that the $\be_{\a_k}$ part already gives the correct functional dimension.

\subsection{Rank 2 case}
We illustrate the representation in the rank 2 case, which is essential to generalize to the higher rank.
Let us first consider the simply-laced case, and fix the longest element to be $w_0= s_2s_1s_2$. Also using Definition \ref{variables} and \ref{Ub}, we name the variables $$(u,v,w):=(v_1,v_2,v_3)=(t_3,t_2,t_1)= (u_2^2,u_1^1,u_2^1).$$

Then a typical element of $\vec[F]\in \cA$ (ignoring the ending $K_i$ terms) is given by 
\Eq{\vec[F]&=\int_C F(u,v,w)\frac{\be_2^{\bi b\inv w}K_2^{\frac{Q}{2b}}}{S_b(Q+\bi w)}\frac{\be_{12}^{\bi b\inv v}(K_1K_2)^{\frac{Q}{2b}}}{S_b(Q+\bi v)}\frac{\be_1^{\bi b\inv u}K_1^{\frac{Q}{2b}}}{S_b(Q+\bi u)}dudvdw\nonumber\\
&=\int_C F(u,v,w)e^{-\frac{\pi Qv}{2}}\frac{\be_2^{\bi b\inv w}}{S_b(Q+\bi w)}\frac{\be_{12}^{\bi b\inv v}}{S_b(Q+\bi v)}\frac{\be_1^{\bi b\inv u}}{S_b(Q+\bi u)}dudvdw}
where in the last line we moved the $K_i$ to the right and absorb into the (ignored) $K$ terms. Note that we have reversed the order of the variables since we used the opposite multiplication order (reading from left to right).

Let us illustrate how to find the actions of the generators $\be_i$ in general. The action of $\be_2$ is easy. It only shifts the variables $w$. We have (ignoring other variables)

\Eqn{\be_2\cdot \vec[F] &= \int F(u,v,w)...\frac{\be_2^{\bi b\inv w+1}}{S_b(Q+\bi w)} ...dw\\
&=\int  F(u,v,w+\bi b)...\frac{\be_2^{\bi b\inv w}}{S_b(Q+\bi w -b)}... dw\\
&=-i\int (e^{\pi bw}-e^{-\pi bw})F(u,v,w+\bi b)...\frac{\be_2^{\bi b\inv w}}{S_b(Q+\bi w)}... dw,}
where we have used property \eqref{funceq} of $S_b$. Hence the action of $\be_2$ on $\cH$ (i.e. on the coefficient function) is given by
\Eq{\be_2 = -\bi (e^{\pi bw}-e^{-\pi bw})e^{-2\pi bp_w}.}
Finally, the shift $w\mapsto \frac{\bi Q}{2}$ gives us the positive expression on $L^2(\R^3)$:
\Eq{\be_2&=-\bi (e^{\pi b(w+\frac{\bi Q}{2})}-e^{-\pi b(w+\frac{\bi Q}{2})})e^{-2\pi bp_w}\nonumber\\
&=-\bi (iq^\half e^{\pi bw} + iq^{-\half} e^{-\pi bw})e^{-2\pi bp_w}\nonumber\\
&= e(w-2p_w)+e(-w-2p_w).}

For the action of $\be_1$, we used Proposition \ref{nonsimple_rel} and proceed as above.
\Eq{\be_1\be_2^{\bi b\inv w} &= q^{\bi b\inv w} \be_2^{\bi b\inv w}\be_1 + q^{\half}(q^{-\bi b\inv w}-q^{\bi b\inv w})\be_2^{\bi b\inv w-1}\be_{12}\nonumber\\
&=e^{-\pi bw} \be_2^{\bi b\inv w}\be_1 + q^{\half}(e^{\pi bw}-e^{-\pi bw})\be_2^{\bi b\inv w-1}\be_{12}.\label{e1e2}}

In the first term of \eqref{e1e2} $\be_1$ commutes again with $\be_{12}^{\bi b\inv v}$ and pick up the factor $q^{-\bi b\inv v}=e^{\pi bv}$ before absorbing into $\be_1^{\bi b\inv u}$, producing the $S_b$ factor same as above. Hence after shifting by $\frac{\bi Q}{2}$ it gives overall
$$e(-w+v)(e(u-2p_u)+e(-u-2p_u)).$$
The second term of \eqref{e1e2} gives an opposite shifting $e(2p_w)$ in the variable $w$, which cancels the factor in front:
\Eqn{q^{\half}(e^{\pi b(w-ib)}-e^{-\pi b(w-ib)})\frac{S_b(Q+\bi w)}{S_b(Q+\bi w+b)}=iq^{\frac{1}{2}}.}

The shifting $e(-2p_v)$ in $v$ by $\be_{12}$ then provides the same factor as before, as well as the extra number $e^{-\frac{\pi Q(\bi b)}{2}}=(\bi q^\half)\inv$ from the auxiliary term $e^{-\frac{\pi Qv}{2}}$ which cancels the above number. After shifting by $\frac{\bi Q}{2}$ we then obtain 
$$e(v-2p_v+2p_w)+e(-v-2p_v+2p_w).$$

Overall, the action of $\be_1$ is then given by
\Eq{\be_1=&e(-w+v+u-2p_u)+e(-w+v-u-2p_u)+e(v-2p_v+2p_w)+e(-v-2p_v+2p_w).\label{e1}}

Finally, the action of $K_i$ is easy to read out. Multiplication from the left, each $K_i$ commute with the root vectors $\be_j$ and pick up a factor of the form $(q^k)^{\bi b\inv u} = e(-ku)$, before absorbing into the $K$ terms. We find that the action is given by
\Eqn{K_1 =e(-2u-v+w-2p_{s_1}+\bi Q),\tab K_2=e(u-v-2w-2p_{s_2}+\bi Q).}

After shifting $(u,v,w)$ by $\frac{\bi Q}{2}$, we have a positive action of $K_i$. Then by Proposition \ref{ignoreS}, we can ignore the $p_{s_i}$ factors.

In summary, we obtained
\Eqn{
\be_1=&e(-w+v+u-2p_u)+e(-w+v-u-2p_u)+e(v-2p_v+2p_w)+e(-v-2p_v+2p_w),\\
\be_2 =&e(w-2p_w)+e(-w-2p_w),\\
K_1 =&e(-2u-v+w),\\
K_2=&e(u-v-2w).
}
and we will show that this is unitary equivalent to the positive representation $\cP$.

We can do the same procedure similarly for non-simply-laced case. For Type $B_2$, using $w_0=s_1s_2s_1s_2$, we define
\Eq{\vec[F]=\int_C F(u,v,w)e^{-\pi Q_sv-\frac{\pi Qu}{2}}\frac{\be_2^{\bi b\inv w}}{S_b(Q+\bi w)}\frac{\be_{12}^{\bi b_s\inv v}}{S_{b_s}(Q_s+\bi v)}\frac{\be_X^{\bi b\inv u}}{S_b(Q+\bi u)}\frac{\be_1^{\bi b_s\inv t}}{S_{b_s}(Q_s+\bi t)}dtdudvdw,}
where $\be_X := T_1T_2(\be_1)$. Then using successively Proposition \ref{nonsimple_rel2}, we arrive at
\Eqn{
\be_1 &= e(-w+u+t-2p_t)+e(-w+u-t-2p_t)\\
&+e(-w+v+u-2p_u+2p_v)+e(-w+v-u-2p_u+2p_v)\\
&+e(v-2p_v+2p_w)+e(-v-2p_v+2p_w),\\
\be_2 &=e(w-2p_w)+e(-w-2p_w),\\
K_1 &=e(-2t-u+w),\\
K_2 &=e(2t-2v-2w).
}

From now on we will ignore $M_K$ (cf. \eqref{MK}) and only work on the $L^2(\R^N)$ part.

\subsection{Twisting}\label{sec:twist}
We have seen from the last section that the action is positive self-adjoint, and each term is very closely related to the standard form $[u]e(-2p_u)$ except for a small twist (cf. first and second term of \eqref{e1}). In the simply-laced case this comes from the commutation relation arising from any relations that are equivalent (i.e. Lusztig transformed) to
$\be_i\cdot \be_j^{\bi b\inv w}$ and locally the action looks exactly like \eqref{e1}. Now using Lemma \ref{qsum} successively, we transform the action by $\be_i \mapsto \Phi_1 \be_i \Phi_1^*,$
$$\Phi_1 = g_b(e(w-u-2p_u+2p_v-2p_w)g_b^*(e(-w-u-2p_u+2p_v-2p_w).$$
This absorb the second term of $\be_1$ to the third term, and spit out the symmetric term from the fourth term, giving
\Eq{\label{a2sym}\be_1 &= [-w+v+u]e(-2p_u) + [v]e(-2p_v+2p_w),\\
\be_2 &=[w]e(-2p_w),\\
K_1 &=e(-2u-v+w),\\
K_2&=e(u-v-2w),}
which is now symmetric in the quantum variables.
On the other hand, one check that this action preserves the action of $\be_2$: it spits out an auxiliary term from the first term, and then reabsorb back to the second term.

For doubly-laced case, this transformation is given by
\Eqn{\Phi_2 &= g_{b_s}(e(-t+v-2p_t+2p_u-2p_v)g_{b_s}^*(e(-t-v-2p_t+2p_u-2p_v)\\
&\circ g_b(e(w-u-2p_u+4p_v-2p_w))g_b^*(e(-w-u-2p_u+4p_v-2p_w)),}
which gives us
\Eq{
\be_1 &= [t+u-w]e(-2p_t)+[u+v-w]e(-2p_u+2p_v)+[v]e(-2p_v+2p_w),\\
\be_2 &=[w]e(-2p_w),\\
K_1 &=e(-2t-u+w),\\
K_2 &=e(2t-2v-2w).\label{b2sym}
}

In general, we can define transformations $\Phi_k$ which preserves the action of all $\be_i, i\neq k$, and bringing the action of $\be_k$ to the standard form \eqref{standardform}. This transformation depends on the fact that there is a variable $v_i$ of minimal index, corresponding to $\be_{ij}$ in \eqref{eijcom} which does not pick up any $e^{\pi bx}$ factor, hence in the standard form \eqref{standardform}. Then one successively apply the above transformation to remove the twisting within $\be_k$ in the order where the commutation relation takes place between adjacent pair of root vectors (or their Lusztig transforms).

Finally, we note that under the following change of variables $T$, with the corresponding action on $p$ given by the transpose inverse:
\Eqn{\veca{u\\v\\w}\mapsto \veca{v-u\\u\\w},\tab \veca{p_u\\p_v\\p_w}\mapsto \veca{p_u+p_v\\p_u\\p_w}}
the representation is exactly the same as that from Proposition \ref{typeA2}. In particular
\begin{Prop} The GNS representation is independent of the choice of the reduced expression of the longest element $w_0\in W$.
\end{Prop}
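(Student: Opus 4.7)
The plan is to invoke Matsumoto's theorem to reduce the statement to local braid moves on rank 2 sub-words of the chosen reduced expression, and then combine the explicit rank 2 computation carried out in Section \ref{sec:twist} with the known independence of the positive representation $\cP_\l$ on the choice of $w_0$ from \cite[Section 11]{Ip2} and \cite[Section 9]{Ip3}.

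First I would observe that any two reduced expressions of $w_0\in W$ differ by a sequence of braid moves, each supported inside a consecutive sub-word corresponding to a pair of simple roots $\{i,j\}$. For a commutation move with $(\a_i,\a_j)=0$, Proposition \ref{LuCox} together with $T_i(\be_j)=\be_j$ forces the two consecutive root vectors $\be_{\a_k},\be_{\a_{k+1}}$ appearing in \eqref{vecF} to commute, so the two GNS representations coincide after merely relabeling the integration variables $t_k,t_{k+1}$ in $\L(\vec[F])$. For a non-trivial braid move in rank 2 (the cases $A_2$ and $B_2$, and analogously the omitted $G_2$), the twisting operators $\Phi_1$ and $\Phi_2$ built from $g_b$ in Section \ref{sec:twist}, together with the linear change of variables $T$, intertwine the GNS action on the sub-word variables with the standard positive representation in Proposition \ref{typeA2} and Proposition \ref{B2rep}. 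Since $\cP_\l$ is known to be independent of the choice of reduced expression of $w_0$ and is independent of $\l$ by Theorem \ref{removeL}, both GNS representations (before and after the braid move) are unitarily equivalent to $\cP$, hence to each other.

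The main obstacle is to verify that the rank 2 twist $\Phi$, which is built from the sub-word variables alone, preserves the action of all remaining generators $\be_m$ for $m\notin\{i,j\}$. This is expected from two facts: on one hand, Lusztig's identity $T_k(\be_m)=\be_m$ when $a_{km}=0$ ensures that the positive root vectors $\be_{\a_l}$ for positions $l$ outside the sub-word are unchanged by a braid move inside it, so the integration variables $t_l$ for $l$ outside the sub-word play no role in $\Phi$; on the other hand, by writing $\Phi$ as a product of $g_b$ of positive operators supported on sub-word variables, its commutation with the $\be_m$ reduces to checking it separately against the two summands generated by $T_{i_1}^{-1}\cdots T_{i_{k-1}}^{-1}\be_{i_k}$. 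Once this locality is established, the rank 2 equivalence globalizes to the full GNS representation, concluding the proof.
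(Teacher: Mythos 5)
Your overall strategy --- reduce to rank 2 braid moves (Matsumoto) and feed in the explicit rank 2 computation of Section \ref{sec:twist} --- is the same as the paper's, and your treatment of the commutation move ($a_{ij}=0$) is correct. The problem is how you close the genuine braid-move case. The assertion that ``both GNS representations (before and after the braid move) are unitarily equivalent to $\cP$, hence to each other'' is circular if it refers to the full representations: the equivalence $\cP_{GNS}\simeq\cP$ for general $\g$ is Theorem \ref{mainthm}, proved only in Section \ref{sec:equirep}, and its proof begins by fixing one particular reduced word --- a reduction that is licensed precisely by the Proposition you are trying to prove. If instead you mean only the rank 2 representation supported on the sub-word variables, then the conclusion for the full representation does not follow without the globalization you defer, and that step is the actual content of the proof rather than a routine check: the generators $\be_i,\be_j$ involved in the braid move, as well as the remaining $\be_m$, are sums of terms whose quantum factors and momentum shifts (cf.\ the pattern of \eqref{FF} and \eqref{e1}) involve variables both inside and outside the sub-word, so ``the twist is built from sub-word variables'' does not by itself yield the required commutation, and your two heuristic reasons do not establish it.

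The paper closes this gap by a structural transfer rather than a direct locality check: the change-of-word unitary for $\cP_{GNS}$ is written as $T^*\circ\Phi\circ T$, where $\Phi$ is the known change-of-word unitary for the positive representation and $T$ is the rank 2 linear identification; it is then computed explicitly as $\Psi$ in \eqref{GNStrans} and observed to be the ``inverse transpose'' of $\Phi$, interchanging the roles of the $p_i$'s and the $v_i$'s in accordance with Theorem \ref{flip}. The global consistency of $\Psi$ across the whole representation is thereby inherited from the consistency of paths already established for $\Phi$ in \cite{Ip2, Ip3}. To repair your argument you would either need to carry out this transfer, or verify directly that the local twist commutes with every summand of every $\be_m$ --- which is the computation you have postponed.
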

\begin{proof} For a change of words of $w_0$, it means the change in Lusztig's transformation $...T_iT_jT_i...\to ...T_jT_iT_j...$ which locally is equivalent to the transformation given by the case of type $A_2$. Hence any such transformation is given by the unitary equivalence $T^* \circ \Phi \circ T,$ 
where $\Phi$ is the unitary equivalence from the positive representation \cite{Ip2}, while $T$ is the transformation matrix above. One can then calculate explicitly the map as
\Eq{\Psi = T'\circ g_b(e(w-u+2p_u-2p_v+2p_w)g_b^*(e(-w+u+2p_u-2p_v+2p_w),\label{GNStrans}}
where 
$$T':\veca{u\\v\\w}\mapsto \veca{-u+v+w\\u\\v}$$
is a unitary transformation. Finally note that the transformation $\Psi$ is essentially the inverse transpose of $\Phi$ (interchanging the role of $p_i$'s and $v_i$'s), hence it is consistent with the choice of paths as established in \cite{Ip2}. Similarly we have a generalized transformation for the doubly-laced case given by ratios of 6 quantum dilogarithms followed by a linear transformation, which is again the inverse transpose of the transformation found in \cite{Ip3}.
\end{proof}

With the procedure above, we can construct the GNS representation $\cP_{GNS}$ for $\cU_{q\til{q}}(\fb_\R)$ of arbitrary type (using Proposition \ref{nonsimple_rel2} etc.). In particular, $\cP_{GNS}$ satisfies all properties of a positive representation (positivity and transcendental relations \eqref{transdef}), hence this gives us another family of positive representations for $\cU_{q\til{q}}(\fb_\R)$. From the expression of $\Psi$, which is essentially the inverse transpose of $\Phi$, we have

\begin{Thm}\label{flip} The expressions of $\be_i$ obtained from the GNS construction (after twisting) is precisely the positive representations with all the terms $[X]e(-2p_Y)$ replaced by $[Y]e(-2p_X)$ (cf. Definition \ref{usul}). The action of $K_i$ is simply obtained from a linear change of variables by the Lusztig's isomorphism corresponding to reflections by simple roots.
\end{Thm}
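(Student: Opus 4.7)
The plan is to proceed by direct calculation generalizing the rank $2$ analysis already carried out in detail for $A_2$ (giving \eqref{a2sym}) and $B_2$ (giving \eqref{b2sym}), and to extract from that calculation the precise duality asserted. The statement has two parts: the form of $\be_i$ after twisting, and the form of $K_i$.

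For $K_i$, I would observe that multiplication on the left of $\vec[F]$ simply commutes past each factor $\be_{\a_k}^{\bi b_{i_k}\inv t_k}$ in \eqref{vecF}, since no root vector gets absorbed. Using Theorem \ref{LusThm} and Proposition \ref{LuCox}, one reads $K_i\be_{\a_k}=q_i^{c_{i,k}}\be_{\a_k}K_i$ for integers $c_{i,k}$ determined by the action of the initial segment of $w_0$ on the simple root $\a_{i_k}$; these are precisely the Cartan pairings $-a_{i,r(k)}$ recorded in \eqref{KK} up to a linear change of variables corresponding to rewriting $\be_{\a_k}$ in terms of the ordered variables $v_k$. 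Collecting the factors $q_i^{c_{i,k}\cdot \bi b_{i_k}\inv t_k}$ across all $k$ and shifting $t_k\mapsto t_k+\tfrac{\bi Q_{i_k}}{2}$ gives the exponential with linear argument in the $t_k$, and the Lusztig-dictated linear substitution matches this with \eqref{KK}.

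For $\be_i$, the key step is to push $\be_i$ through the product $\be_{\a_1}^{\bi b\inv t_1}\be_{\a_2}^{\bi b\inv t_2}\cdots$ using the commutation relations of Proposition \ref{nonsimple_rel} in the simply-laced case, and Proposition \ref{nonsimple_rel2} in the doubly-laced case (with $G_2$ handled analogously). Each commutation produces a \emph{main term} in which $\be_i$ picks up a scalar $e^{\pi b_{i}(\pm t_k)}$ and continues onwards, plus a \emph{correction term} in which $\be_i\be_{\a_k}$ is replaced by a non-simple root vector $\be_{\a_k'}$ multiplied by a $q$-dependent prefactor. The main branch eventually absorbs $\be_i$ into some $\be_{\a_j}^{\bi b\inv t_j}$ with $\a_j=\a_i$ (the rightmost occurrence); the functional equation \eqref{funceq} applied to $S_b(Q_i+\bi t_j)$ converts this absorption into a shift operator $e(-2p_{t_j})$ multiplied by $[\,Y_j\,]$ with $Y_j$ a $\Z$-linear combination of the $t_k$'s built from the commutation exponents. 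Each correction branch similarly terminates in an absorption and contributes an analogous term whose $t$-linear content is \emph{exactly the $p$-shift coefficient} appearing in the corresponding term of \eqref{FF}, and whose $p$-shift is exactly the $u$-coefficient there. This is the combinatorial origin of the flip $[X]e(-2p_Y)\leftrightarrow[Y]e(-2p_X)$: the roles played by the commutation exponents (producing $e^{\pi b X}$-factors) and the absorption shifts (producing $p$-derivatives) are interchanged between the GNS and the canonical positive picture.

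After twisting by the operators $\Phi_k$ described in Section \ref{sec:twist}, adjacent pairs of correction terms combine into the symmetric standard form $[X]e(-2p_Y)$ as demonstrated in \eqref{a2sym} and \eqref{b2sym}, and one verifies that the total number of such terms matches the number of terms in \eqref{FF} for the given reduced word (this count is a root-system combinatorial fact tied to the length of $w_0$ and the structure of the positive roots). The main obstacle is a clean bookkeeping argument in arbitrary type showing that the bijection between GNS-terms and positive-representation terms preserves the flip systematically; for this I would use the already-established fact that $\cP_{GNS}$ is independent of the reduced expression of $w_0$, reduce via Coxeter moves to the rank $2$ adjacency cases (already handled in \eqref{a2sym} and \eqref{b2sym}), and invoke induction on the length of $w_0$ to propagate the flip through the full product. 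The formula \eqref{GNStrans} for $\Psi$ being the inverse transpose of $\Phi$ then packages this duality conceptually and concludes the proof.
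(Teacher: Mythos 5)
Your proposal is correct and follows essentially the same route as the paper, which justifies Theorem \ref{flip} by the explicit rank~2 computations leading to \eqref{a2sym}--\eqref{b2sym} (compared against Propositions \ref{typeA2} and \ref{B2rep}), together with the observation that the change-of-word transformation $\Psi$ in \eqref{GNStrans} is the inverse transpose of the corresponding $\Phi$ from \cite{Ip2, Ip3}, so that the flip propagates through Coxeter moves to arbitrary reduced words. Your term-by-term bookkeeping of main versus correction branches and the explicit treatment of $K_i$ are elaborations of what the paper leaves implicit, not a different argument.
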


In the rank 2 case, one should compare the expressions \eqref{a2sym}-\eqref{b2sym} with the expressions from Proposition \ref{typeA2} and \ref{B2rep}.
\section{Equivalence of representation $\cP_{GNS}\simeq \cP$}\label{sec:equirep}

So far we have constructed a family of representations $\cP_{GNS}$ from the GNS-representation. In this section we will show that the representation is unitary equivalent to the one constructed earlier in \cite{Ip2, Ip3}.

For type $A_n$, it turns out we can do this directly by a change of variables.
\begin{Thm}\label{mainthm} For type $A_n$, the GNS representation $\cP_{GNS}$ constructed by left multiplication on $\cA=\cU_{q\til{q}}^{C^*}(\fb_\R)$ is unitary equivalent to the positive representation $\cP_\l\simeq \cP$ of $\cU_{q\til{q}}(\g_\R)$ restricted to its Borel part.
\end{Thm}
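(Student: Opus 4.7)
My plan is to establish the unitary equivalence by exhibiting an explicit linear change of variables on $L^2(\R^N)$, with $N=l(w_0)$, generalizing the rank $2$ computation at the end of Section \ref{sec:twist}.

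First I would invoke Remark \ref{simplify}, which is specifically available in type $A_n$, to replace every $[X]e(-2p_Y)$ appearing in $\cP$ by the single exponential $e(X-2p_Y)$, and apply the parallel simplification on the GNS side: by Theorem \ref{flip} the corresponding factors are $[Y]e(-2p_X)$, which become $e(Y-2p_X)$. In this normalized form each $\be_i$ is a finite sum of pure exponentials of integer linear combinations of the position and momentum coordinates, while the actions of $K_1,\ldots,K_n$ in the two pictures already differ only by a linear change of variables induced by a Lusztig reflection, as noted at the end of Theorem \ref{flip}.

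Next I would construct the explicit linear map $T$ on $\R^N$ that interchanges the two families of linear forms $\{X_k^{(i)}\}$ and $\{Y_k^{(i)}\}$ appearing in the exponents. For $A_2$ the required map is $T:(u,v,w)\mapsto(v-u,u,w)$, whose correctness is exactly what is verified in the closing paragraph of Section \ref{sec:twist}. For general $A_n$ the analogous map is built inductively along a fixed reduced word $w_0=s_{i_1}\cdots s_{i_N}$: each positive root $\alpha_{ij}=\alpha_i+\cdots+\alpha_j$ contributes one variable, and the prescribed swap $X\leftrightarrow Y$ corresponds to a unipotent change of basis reflecting the nested-interval structure of the positive roots of type $A$. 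The dual action on the $p$-coordinates is the inverse transpose, so that $T$ is symplectic and the associated pullback $\Phi$ on $L^2(\R^N)$ is unitary.

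Finally, root by root, I would verify that $\Phi$ conjugates each $\be_i$ of the simplified GNS representation to the corresponding $\be_i$ of the simplified $\cP$, and likewise for $K_j$; because both sides are sums of pure exponentials, this reduces to a finite linear-algebraic comparison of the exponents. The principal obstacle will be the bookkeeping needed to pin $T$ down in closed form for all $n$ and to prove that it intertwines every generator simultaneously: the $A_2$ case is essentially a single transvection on three variables and is handled by direct inspection, but for $A_n$ one must argue that the flip prescribed by Theorem \ref{flip} assembles into a well-defined unipotent map across all $N$ positive roots, compatibly with the commutation relations of Proposition \ref{nonsimple_rel}. This hinges on the nested-interval combinatorics specific to type $A$, which explains both why the proof works here and why the same direct approach does not obviously extend to other types.
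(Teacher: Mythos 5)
Your plan is essentially the paper's own proof: after fixing the standard reduced word $w_0 = s_n s_{n-1}\cdots s_1\; s_n s_{n-1}\cdots s_2\;\cdots\; s_n s_{n-1} s_n$, the paper exhibits exactly the unipotent change of variables you describe, and gives it in closed form as $u_k^m\mapsto u_{n+1-m}^{n+1-k}$ together with $u_k^m\mapsto u_{n+1-m}^{n+1-k}-u_{n+2-m}^{n+2-k}$ for $m>1$, which is precisely the $A_n$ generalization of the $A_2$ map $T:(u,v,w)\mapsto(v-u,u,w)$ that you take as your starting point. The only substantive difference is your preliminary detour through Remark \ref{simplify}, which is unnecessary: the substitution can be applied directly to the bracketed expressions, since $[X]e(-2p_Y)$ transforms covariantly under linear changes of the coordinates and momenta.
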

\begin{proof}
It suffices to consider a particular choice of reduced expression of $w_0$. We fixed the longest element $w_0\in W$ to be the standard expression given in \cite{Ip2}: 
$$w_0 = s_n\;\;s_{n-1}...s_1\;\;s_{n}s_{n-1}...s_2\;\;...\;\;s_{n}s_{n-1}s_n.$$

Then the GNS representation constructed in this section is unitary equivalent to the positive representation $\cP$ by a change of variables (using the notation from Definition \ref{variables})
\Eqn{u_k^1&\mapsto u_{n}^{n+1-k},\\
u_k^m&\mapsto u_{n+1-m}^{n+1-k}-u_{n+2-m}^{n+2-k},\tab m>1.}
\end{proof}

For other types, we use the following trick together with Lusztig's isomorphisms.
\begin{Lem} There exists a unitary transformation such that the action of $\be_i$ is transformed into $q_i\inv K_i \be_i$.
\end{Lem}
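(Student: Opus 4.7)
The plan is to construct $U$ explicitly as a Gaussian in the logarithm of $K_i$. Since $K_i$ is positive and essentially self-adjoint in the positive representation, spectral calculus produces a self-adjoint operator
\[
X_i := -\frac{1}{2\pi b_i}\log K_i, \qquad K_i = e^{-2\pi b_i X_i},
\]
and I would take
\[
U := e^{-\pi \bi X_i^2},
\]
which is unitary since $-\pi\bi X_i^2$ is anti-self-adjoint.

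The key input is the infinitesimal commutation relation $[X_i,\be_i] = -\bi b_i\, \be_i$. This follows by exponentiating the defining relation $K_i \be_i = q_i^2 \be_i K_i$ to $K_i^s \be_i K_i^{-s} = e^{2\pi \bi b_i^2 s}\,\be_i$ (in the integrable-representation sense of Section \ref{sec:borel}) and differentiating at $s=0$. Consequently $\be_i^{-1} X_i \be_i = X_i - \bi b_i$, and by functional calculus this extends to the shift identity $f(X_i)\,\be_i = \be_i\, f(X_i - \bi b_i)$ for any entire $f$, in particular for the Gaussian $f(X) = e^{-\pi\bi X^2}$.

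Applying the shift identity to $U$, completing the square, and collecting the constant phase yields
\[
U\, \be_i\, U^{-1} = \be_i\cdot e^{-2\pi b_i X_i}\cdot e^{\pi\bi b_i^2} = q_i\, \be_i K_i = q_i\inv K_i \be_i,
\]
where the final equality is just the commutation $K_i \be_i = q_i^2 \be_i K_i$ again. This proves the lemma.

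The main technical delicacy is the unboundedness of $X_i$ and the fact that the shift is by the purely imaginary quantity $-\bi b_i$, but this is exactly the standard framework underlying the positive representations (cf. Conjecture \ref{SvNThm} and the discussion of integrable representations of the quantum plane): the shift identity holds on a dense common core of analytic vectors, so the manipulations above are rigorous. I expect this lemma to be combined inductively with the Lusztig isomorphisms $T_i$ of Theorem \ref{LusThm} in order to propagate the type-$A_n$ equivalence $\cP_{GNS}\simeq \cP$ of Theorem \ref{mainthm} to the non-simply-laced cases, using that the transformation $\be_i \mapsto q_i\inv K_i \be_i$ is precisely the kind of twist that appears when one reshuffles the reduced expression of $w_0$ past a simple reflection and afterwards normalizes with the Lusztig automorphism.
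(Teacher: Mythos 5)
Your core computation is correct and is in fact kindred to the paper's own proof: both transformations are ``Gaussian'' multiplication operators (the paper obtains its unitary by inserting $\prod_{k=1}^N K_{\a_k}^{t_k}$ in front of $\vec[F]$ and commuting, which on $\cP_{GNS}$ amounts to multiplication by $e^{\pi\bi h(t_1,\dots,t_N)}$ with $h$ a homogeneous quadratic form in the GNS variables, while your $e^{-\pi\bi X_i^2}$ is a quadratic phase in $\log K_i$, itself a real linear combination of the position variables). The identity $\be_i^{-1}X_i\be_i=X_i-\bi b_i$, the completion of the square, and the final bookkeeping $q_i\be_iK_i=q_i^{-1}K_i\be_i$ all check out.

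However, there is a genuine gap: the lemma is needed (and is used in the very next lemma, where $T_{i_1}\cdots T_{i_N}$ sends $q_i^{-1}K_i\be_i\mapsto\bf_{\s(i)}$ for \emph{every} $i$) as a single unitary transforming \emph{all} generators $\be_1,\dots,\be_n$ simultaneously. Your $U=e^{-\pi\bi X_i^2}$ handles only the one fixed root $i$ and actively spoils the others: since $[X_i,\be_j]=-\tfrac{\bi}{2}b_ia_{ij}\be_j$, conjugation gives $U\be_jU^{-1}=\be_j\,K_i^{a_{ij}/2}$ up to a phase for $j\neq i$, which is not of the required form. The repair is straightforward but must be said: take $U=e^{-\pi\bi Q(X_1,\dots,X_n)}$ with $Q(X)=\sum_{k,l}d_{kl}X_kX_l$, where the symmetric matrix $d$ is determined by $\sum_l d_{kl}\tfrac{b_la_{lj}}{2}=b_j\delta_{kj}$ (solvable and symmetric because $d$ is then a diagonal rescaling of the inverse Gram matrix of the root system); the same shift-and-complete-the-square computation then yields $U\be_jU^{-1}=\be_j e^{-2\pi b_jX_j+\pi\bi Q(w_j)}$ with $Q(w_j)=b_j^2$ coming out automatically, i.e.\ $U\be_jU^{-1}=q_j^{-1}K_j\be_j$ for all $j$ at once. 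With that amendment your argument is a clean, representation-independent alternative to the paper's insertion of $\prod_kK_{\a_k}^{t_k}$, which achieves the simultaneity automatically but at the cost of being tied to the GNS coordinates.
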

\begin{proof}
If we add $\prod_{k=1}^N K_{\a_k}^{t_k}$ in front of $\vec[F]\in\cA$ in Definition \ref{Ubnew}, then the action of $\be_i$ will pick up the correct factors according to $K_i\inv$. But if we commute these factors through the $\be_{\a_k}$ to the right and absorb into the $K$ terms, then on the representation space $\cP$ this is equivalent to multiplying by an exponential function of the form
$e^{\pi \bi h(t_1,...,t_N)},$ where $h(t_1,...,t_N)$ is a homogeneous quadratic function in the variables $t_k$. In particular this is a unitary transformation.
\end{proof}

\begin{Lem} The Lusztig's isomorphism (cf. Theorem \ref{LusThm})  corresponding to the longest word of the Weyl group $w_0=s_{i_1}...s_{i_N}$ given by $T_{i_1}....T_{i_N}$ transform
$$q_i\inv K_i\be_i \mapsto \bf_{\s(i)},$$
where $\bf_i$ is the lower Borel generators, $\s$ is the identity automorphism for $\g$ of type $B_n,C_n, D_{2n}, E_7, E_8, F_4, G_2$, and the unique index 2 automorphisms on the simple roots for $\g$ of type $A_n, D_{2n+1}$ and $E_6$.
\end{Lem}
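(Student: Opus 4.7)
The plan is to verify the identity by reducing to the rank-one case and then leveraging the fact that $T_{w_0}:=T_{i_1}\cdots T_{i_N}$ is well-defined independently of the reduced expression, thanks to the braid relations of Proposition~\ref{LuCox}.

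First I would handle the base case $\g = \sl_2$: here $w_0 = s_i$ and $\sigma = \mathrm{id}$, so the identity reduces to $T_i(q_i^{-1}K_i\be_i) = \bf_i$. This is a direct manipulation using the formulas of Theorem~\ref{LusThm}, namely $T_i(\be_i) = q_i\bf_i K_i^{-1}$ and $T_i(K_i) = K_i^{-1}$, together with the $q$-commutation $K_i\bf_i = q_i^{-2}\bf_i K_i$; the prefactor $q_i^{-1}K_i$ is engineered precisely so that the $K$-factors arising from $T_i(\be_i)$ cancel, leaving only $\bf_i$ (after interpreting $T_i$ in the ``inverse'' convention \eqref{eak} used in the paper, where $T_i(\bf_i)=q_i^{-1}K_i\be_i$ gives the cleanest bookkeeping).

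For arbitrary $\g$, I would fix a simple root $\a_i$ and, using the Coxeter-relation freedom in the choice of reduced decomposition of $w_0$, pick a reduced expression ending with $s_i$, i.e.\ $w_0=w' s_i$ with $\ell(w')=N-1$. The classical identity $w_0(\a_i) = -\a_{\sigma(i)}$, obtained from the fact that $-w_0$ permutes the simple roots, then forces $w'(\a_i) = \a_{\sigma(i)}$, which is a positive simple root. By Proposition~\ref{LuCox} this yields $T_{w'}(X_i)=X_{\sigma(i)}$ for $X=\be,\bf,K$, and combining with the rank-one computation gives
\Eqn{
T_{w_0}(q_i^{-1} K_i \be_i) \;=\; T_{w'}\bigl(T_i(q_i^{-1} K_i \be_i)\bigr) \;=\; T_{w'}(\bf_i) \;=\; \bf_{\sigma(i)},
}
as required. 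That the answer is independent of the initial choice of reduced expression is exactly Proposition~\ref{LuCox}.

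The explicit description of $\sigma$ type by type (identity for $B_n, C_n, D_{2n}, E_7, E_8, F_4, G_2$ and the unique nontrivial diagram involution for $A_n, D_{2n+1}, E_6$) is the classical statement about the action of $-w_0$ on the simple roots and can be quoted from standard Lie-theoretic references. The main obstacle I anticipate is the careful bookkeeping in the rank-one step: the combination $q_i^{-1}K_i\be_i$ must be exactly correct for all $K$-factors to collapse, and this requires a sign/exponent audit against Theorem~\ref{LusThm} in light of the paper's use of the inverse convention in \eqref{eak}. Once that normalization is fixed, the higher-rank argument is a clean application of the braid relations together with the Weyl-group identity $w_0(\a_i)=-\a_{\sigma(i)}$.
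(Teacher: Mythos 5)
Your proposal is correct and follows essentially the same route as the paper, whose proof simply cites the three ingredients you use: the rank-one formulas of Theorem \ref{LusThm}, the compatibility $T_{w'}(X_i)=X_{\sigma(i)}$ from Proposition \ref{LuCox}, and the classical fact $w_0(\a_i)=-\a_{\sigma(i)}$. Your parenthetical caveat about the inverse convention is the right one to make — the clean cancellation in the rank-one step comes from reading $T_i(\bf_i)=q_i^{-1}K_i\be_i$ backwards rather than from applying $T_i$ directly to $q_i^{-1}K_i\be_i$ — but this matches the paper's stated use of the inverse version of \eqref{eak}.
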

\begin{proof} It follows directly from the action given by Theorem \ref{LusThm}, the properties from Proposition \ref{LuCox} and the fact that the longest Weyl element acts on simple roots as $\a_i\mapsto -\a_{\s(i)}$.
\end{proof}
Since we have seen from Theorem \ref{flip} that the action given by $\cP_{GNS}$ is precisely obtained by flipping $[X]e(-2p_Y)\to [Y]e(-2p_X)$, the Lusztig's isomorphism $T_i$ modified according to this flip, will map the action of $\be_i$ on $\cP_{GNS}$ to the corresponding action of $\bf_i$ given by Theorem \ref{FKaction}. More precisely, we obtain
\Eq{\be_i=&\sum_{k=1}^n[-u_i^k] e\left(\sum_{j=1}^{v(i,k)-1} a_{i,r(j)}p_j+p_i^k\right),\label{newF}}
with the corresponding roots and reduced expression of $w_0$ relabeled. Recall that $a_{ij}$ is the Cartan matrix, $r(j)$ is the root label corresponding to the variable $p_j$ (which is the shift in the variable $v_j$), and $v(i,k)$ is the index such that $u_i^k=v_{v(i,k)}$ (i.e. we sum all the terms appearing to the left of $p_i^k$) and we used the abuse of notation from Definition \ref{usul}.

\begin{Thm} There exists a unitary transformation such that \eqref{newF} is unitary equivalent to \eqref{FF}.
\end{Thm}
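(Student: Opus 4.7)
The strategy is to exhibit an explicit unitary realizing the position--momentum duality of Theorem~\ref{flip} between \eqref{newF} and \eqref{FF}. By Theorem~\ref{removeL} we may set $\l=0$ in \eqref{FF}, so both sides depend only on the $v,p$ variables.

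Encode both expressions via a single matrix: let $N=l(w_0)$ and let $M$ be the lower-triangular $N\times N$ matrix with $M_{l,l}=1$ and $M_{l,j}=a_{r(l),r(j)}$ for $j<l$. Then \eqref{FF} rewrites as $\bf_i=\sum_{k}[-(Mv)_{v(i,k)}]e(2p_{v(i,k)})$, while \eqref{newF} rewrites as $\be_i=\sum_{k}[-v_{v(i,k)}]e((Mp)_{v(i,k)})$. Thus the two formulas differ exactly by interchanging the roles of the ``complicated'' linear form and the ``simple'' coordinate between the bracket and the exponent, which is the content of the flip $[X]e(-2p_Y)\leftrightarrow[Y]e(-2p_X)$.

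The key unitary is the symplectic substitution $\widetilde{v}=Mv$, $\widetilde{p}=M^{-T}p$, which is implemented by some $U_M$ on $L^2(\R^N)$ because $M\in SL(N,\R)$. Under $U_M$, the complicated bracket in \eqref{FF} simplifies to $[-\widetilde{v}_{v(i,k)}]$, matching the bracket in \eqref{newF}, while the exponent $e(2p_{v(i,k)})$ becomes $e(2\widetilde{p}_{v(i,k)}+2\sum_{l>v(i,k)}a_{r(l),i}\widetilde{p}_l)$. A further unitary reversing the order of variables $j\leftrightarrow N+1-j$ (which corresponds to reversing the reduced word $w_0$, and is a unitary equivalence of positive representations by \cite{Ip2,Ip3} since $w_0=w_0^{-1}$) flips the summation range to $j<v(i,k)$. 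In the simply-laced case the symmetry $a_{i,j}=a_{j,i}$ then reproduces the exponent of \eqref{newF} up to the overall scaling of the diagonal entries.

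The main obstacle is the non-simply-laced case, where the Cartan matrix is asymmetric and short versus long root variables must be scaled by $b_s$ and $b_l$ according to Definition~\ref{usul}. A block-scaled refinement of $M$ and a correspondingly modified symplectic substitution are required, and verification that the transformed exponent matches \eqref{newF} needs careful bookkeeping of the $b_i^2$ factors produced by scaling positions and momenta differently. A secondary technical point is reconciling the factor of $2$ between $e(2p_{v(i,k)})$ in \eqref{FF} and $e((Mp)_{v(i,k)})$ in \eqref{newF}, which is most naturally absorbed by choosing the diagonal of $M$ to be $2$ rather than $1$ (and adjusting the off-diagonal entries accordingly), or equivalently by composing with an extra diagonal rescaling unitary.
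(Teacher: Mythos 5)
Your strategy is the paper's strategy: a unipotent (lower-triangular) symplectic change of variables that trades the complicated linear form between the quantum bracket and the exponent, followed by the observation that the result is the formula \eqref{FF} attached to the \emph{reversed} reduced word $w_0=s_{i_N}\cdots s_{i_1}$, and finally the unitary equivalence of positive representations across reduced expressions of $w_0$ from \cite{Ip2,Ip3}. Two of your worries are not real issues. The factor of $2$ needs no diagonal rescaling: the flip $[X]e(-2p_Y)\mapsto[Y]e(-2p_X)$ of Theorem \ref{flip} already carries the $2$, so the exponent in \eqref{newF} is $e(2\sum_j a_{i,r(j)}p_j+2p_i^k)$ and your matrix $M$ can keep diagonal $1$. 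And the reversal $j\leftrightarrow N+1-j$ is only a relabelling that \emph{identifies} the transformed expression with the reversed-word formula; the substantive input is the (nontrivial, quantum-dilogarithm-built) intertwiner between the positive representations for the two reduced words, which is what must actually be invoked --- you do cite it, but it is not the coordinate permutation itself.

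The genuine gap is the one you flag and leave open: for non-simply-laced $\g$ your argument stops at ``careful bookkeeping is required,'' so the theorem is only established for simply-laced $\g$. The paper closes this uniformly by running the substitution in the root-length-rescaled phase-space coordinates, successively from the largest index $v(i,k)$:
$$b_ip_i^k\mapsto b_ip_i^k-\sum_{j=1}^{v(i,k)-1}a_{i,r(j)}b_jp_j,\qquad b_jv_j\mapsto b_jv_j+a_{i,r(j)}b_iv_i^k\quad(j<v(i,k)),$$
which collapses every exponential of \eqref{newF} to $e(2p_i^k)$ while the brackets become $\bigl[-u_i^k-\sum_{j>v(i,k)}a_{i,r(j)}v_j\bigr]$, exactly \eqref{FF} for the reversed word. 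The bookkeeping you anticipate is absorbed by the symmetrizability identity $a_{ij}b_i^2=a_{ji}b_j^2$ together with the convention of Definition \ref{usul} that a bracket weights each variable by $\pi b_s$ or $\pi b_l$ according to its own root length; this is precisely the ``block-scaled refinement'' of $M$ you describe, and without carrying it out your proof does not cover types $B$, $C$, $F_4$, $G_2$.
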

\begin{proof} Notice that for each term in the expression of $\bf_i$, the variable $p_i^k$ with the maximal index $v(i,k)$ is unique, and these cover all variables $(p_1,..., p_N)$.
Therefore we apply the transformation successively from the largest $v(i,k)$ index by
\Eq{b_ip_i^k\mapsto b_ip_i^k -\sum_{j=1}^{v(i,k)-1} a_{i,r(j)}b_jp_j ,}
which induces the corresponding change of variables
\Eq{b_jv_j\mapsto b_jv_j+a_{i,r(j)}b_iv_i^k\label{changevj}}
for $1\leq j \leq v(i,k)-1$. It is clear that under this transformation, we reduce all exponential terms to $e(2p_i^k)$. On the other hand, it is easy to see that under \eqref{changevj} the quantum factors are changed to
$$[-u_i^k]\mapsto \left[- u_i^k-\sum_{j=v(i,k)+1}^N a_{i,r(j)}v_j \right].$$
But this is precisely the action of $\bf_i$ corresponding to the longest element $w_0$ with the reversed reduced expression
$w_0=s_{i_N}...s_{i_1}.$
Since the positive representation is independent of the reduced expression of $w_0$, we proved the unitary equivalence.
\end{proof}
Finally, since $K_i$ depends only linearly on the variables (inside an exponential), it is uniquely determined by the shifts $e(2p_i^k)$ appearing in the action of $\bf_i$. In particular, under the Lusztig's transformation and the above unitary equivalence, $K_i$ from $\cP_{GNS}$ transform precisely to $K_{\s(i)}\inv$ as given in Theorem \ref{FKaction}.

Therefore we can now state our Main Theorem.
\begin{Thm}\label{mainthm} For any simple Lie algebra $\g$, the GNS representation $\cP_{GNS}$ constructed by left multiplication on $\cA=\cU_{q\til{q}}^{C^*}(\fb_\R)$ is unitary equivalent to the positive representation $\cP_\l\simeq \cP$ of $\cU_{q\til{q}}(\g_\R)$ restricted to its Borel part.
\end{Thm}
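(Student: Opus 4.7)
The plan is to assemble the intermediate results of Section \ref{sec:equirep} into a single chain of unitary equivalences, disposing of type $A_n$ separately and invoking Lusztig transport for all other types. For $A_n$ I would use directly the explicit relabeling of the standard coordinates described just above the statement, which by inspection conjugates $\cP_{GNS}$ onto the canonical positive representation for the standard reduced word $w_0 = s_n s_{n-1}\cdots s_1\,s_n\cdots s_2\,\cdots\,s_n$.

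For an arbitrary simple $\g$, the plan is to exhibit three unitaries whose composition intertwines $\cP_{GNS}$ with $\cP$. The first is the quadratic phase twist implemented by inserting $\prod_k K_{\a_k}^{t_k}$ before $\vec[F]$ in Definition \ref{Ubnew}: commuting these factors through $\be_{\a_k}^{\bi b_{i_k}^{-1}t_k}$ produces a factor $e^{\pi\bi h(t_1,\ldots,t_N)}$ with $h$ homogeneous quadratic in the $t_k$, and by the first lemma of this section the net effect on a simple generator is $\be_i\mapsto q_i^{-1}K_i\be_i$. The second is the Lusztig isomorphism $T_{i_1}\cdots T_{i_N}$ associated to a reduced expression $w_0=s_{i_1}\cdots s_{i_N}$, which by Theorem \ref{LusThm} and Proposition \ref{LuCox} carries $q_i^{-1}K_i\be_i$ to $\bf_{\sigma(i)}$, where $\sigma$ is the Dynkin automorphism induced by $-w_0$. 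The third is the triangular substitution $b_ip_i^k\mapsto b_ip_i^k-\sum_{j<v(i,k)} a_{i,r(j)}b_jp_j$, applied in order of decreasing $v(i,k)$; after invoking Theorem \ref{flip} to recognize the post-Lusztig action as \eqref{newF}, this substitution collapses each exponential to $e(2p_i^k)$ while simultaneously generating $[-u_i^k-\sum_{j>v(i,k)} a_{i,r(j)}v_j]$ inside the bracket, producing the canonical action of $\bf_i$ for the \emph{reversed} expression $w_0=s_{i_N}\cdots s_{i_1}$. Invariance of $\cP$ under change of reduced expression then closes the loop.

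The main obstacle will be verifying the third stage: I have to check that the triangular substitution is indeed unitary (an upper-triangular linear change of variables on $\R^N$), that the index sets on which $p_i^k$ is the variable of maximal $v$-index in a given term partition $\{p_1,\ldots,p_N\}$ so that the recursion is well-defined without collisions, and that the substitution produces exactly the Cartan matrix entries inside the quantum brackets demanded by \eqref{FF}. Once the generators $\bf_i$ (equivalently, $\be_i$ pre-Lusztig) have been matched in this way, the Cartan generators $K_i$ come for free: their action is a linear exponential function of the coordinates, and is uniquely determined by the shift pattern appearing in $\bf_i$ together with the quantum Serre relations, so up to the automorphism $\sigma$ they match automatically, extending the unitary equivalence to the full Borel subalgebra $\cU_{q\til{q}}^{C^*}(\fb_\R)$.
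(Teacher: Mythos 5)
Your proposal follows the paper's own proof essentially step for step: the $A_n$ case by explicit coordinate relabeling, and for general $\g$ the same three-stage composition (quadratic phase twist giving $\be_i\mapsto q_i\inv K_i\be_i$, Lusztig transport to $\bf_{\s(i)}$ via Theorem \ref{flip}, and the decreasing-index triangular substitution landing on the reversed reduced word), closed off by independence of $\cP$ from the choice of $w_0$ and the observation that $K_i$ is forced by the shift pattern. The verification points you flag for the third stage are exactly the ones the paper's argument relies on, so the approach is the same and correct.
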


\begin{Cor}\label{maincor} The tensor product of the positive representations $\cP$ restricted to the Borel part is unitary equivalent to the tensor product of itself with a trivial multiplicity module:
\Eq{W: \cP\ox \cP\simeq M\ox\cP,}
where the underlying Hilbert spaces are all $\cP\simeq M \simeq L^2(\R^N)$, $N=l(w_0)$.

\end{Cor}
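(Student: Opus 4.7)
The plan is to apply the multiplicative unitary machinery of Proposition \ref{WD} to the GNS representation of $\cA=\cU_{q\til{q}}^{C^*}(\fb_\R)$ and then transport the resulting tensor product decomposition over to the positive representation $\cP$ using the identification established in Theorem \ref{mainthm}.

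First I would use Theorem \ref{mainthm} together with Proposition \ref{ignoreS} to write $\cH_{GNS}\simeq \cP\ox M_K$, where $M_K\simeq L^2(\R^n)$ carries the trivial $\cA$-action and the $\cP$-factor realizes the positive representation. Under this identification the coproduct action $\D$ on $\cH_{GNS}\ox\cH_{GNS}$ is the tensor product of $\D$ on $\cP\ox\cP$ with the trivial action on $M_K\ox M_K$, since both $\D(K_i)=K_i\ox K_i$ and $\D(E_i)=1\ox E_i+E_i\ox K_i$ leave the $M_K$-directions untouched. Second, I would invoke Proposition \ref{WD} to produce the multiplicative unitary $W\in \cB(\cH_{GNS}\ox\cH_{GNS})$ satisfying $W^{*}(1\ox x)W=\D(x)$ for every $x\in \cA$; this immediately supplies the unitary equivalence $\cH_{GNS}\ox \cH_{GNS}\simeq M_{GNS}\ox \cH_{GNS}$ of $\cA$-representations, with $M_{GNS}\simeq L^2(\R^{N+n})$ carrying the trivial action and the second $\cH_{GNS}$ carrying the original GNS representation.

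Composing the two steps yields
\[(\cP\ox\cP)\ox(M_K\ox M_K)\ \simeq\ M_{GNS}\ox(\cP\ox M_K)\]
as $\cA$-representations, with a single non-trivial $\cP$-factor surviving on each side. Cancelling a common trivial factor isomorphic to $L^2(\R^{2n})$ then leaves $\cP\ox\cP\simeq L^2(\R^N)\ox\cP=M\ox\cP$, which is the desired statement.

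The main obstacle is justifying the last cancellation cleanly, since in general tensoring two $\cA$-representations by a common trivial Hilbert space does not allow the trivial factor to be removed. I would handle this either by showing that, because $\D$ acts trivially on the $M_K$-directions, the formula \eqref{WW} defining $W$ forces it to respect the splitting $\cH_{GNS}\simeq \cP\ox M_K$ and therefore restrict directly to a unitary $\cP\ox\cP\to M\ox\cP$; or by invoking that separable infinite-dimensional trivial $\cA$-representations are classified up to unitary equivalence by their Hilbert dimension, so the bookkeeping $(N+n)+n-2n=N$ of free coordinates is unambiguous.
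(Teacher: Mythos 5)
Your proposal follows essentially the same route as the paper: identify $\cH_{GNS}\simeq \cP\ox M_K$ via Theorem \ref{mainthm} and Proposition \ref{ignoreS}, apply the multiplicative unitary of Proposition \ref{WD} to get $\cH_{GNS}\ox\cH_{GNS}\simeq M_{GNS}\ox\cH_{GNS}$, and then strip off the trivial $M_K$ factors. One caveat on your final step: of the two justifications you offer for the cancellation, only the first is sound. The second --- cancelling a common trivial tensor factor by counting Hilbert-space dimensions --- fails in general because of multiplicity absorption: for an irreducible $\pi$ one has $\pi\ox 1_{\cK}\simeq(\pi\oplus\pi)\ox 1_{\cK}$ for infinite-dimensional separable $\cK$, so $\rho\ox 1_{\cK}\simeq\sigma\ox 1_{\cK}$ does not imply $\rho\simeq\sigma$. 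The paper's argument is your first option: one arranges the intertwiner itself to factor through the splitting, concretely by supplying delta distributions on the $M_K$ variables (as carried out explicitly in the type $A_1$ computation of Section \ref{sec:A1}, where $W^*$ does a priori mix the $s,t$ variables with $u,v$ and must be reduced by setting $\d(s)\d(t)$). So you should commit to that version of the argument rather than leaving the dimension-count alternative as a live option.
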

\begin{proof} Since we know that $\cP\simeq \cP_{GNS}$ and the GNS-space $\cH\simeq \cP_{GNS}\ox M_K$, this follows immediately from Proposition \ref{WD} of the property of the multiplicative unitary. The unitary transformation will be given by the multiplicative unitary $W$ after factoring the trivial intertwiners on $M_K$, for example, by supplying the delta distributions on the corresponding variables.
\end{proof}

\section{Example} \label{sec:Ex}
In this section, we write explicitly the representations and decompositions for type $A_1$ and $A_2$, which provide us a useful basic framework when we try to deal with the explicit construction of quantum higher Teichm\"{u}ller theory. The computation for type $A_3$ can be found in \cite{Ip6}.
\subsection{Type $A_1$}\label{sec:A1}
Let us first consider the simplest case where $\g$ is of type $A_1$. The Borel part of $\cU_{q\til{q}}(\sl(2,\R))$ in this case is nothing but the quantum plane $\cB_{q\til{q}}$, which is studied extensively in \cite{FK, Ip1}. Nonetheless, it will be illustrative to see how the analysis works with respect to the theory of multiplicative unitary from Proposition \ref{WD}.

First, we can do a transformation to bring the representation $\cP_{GNS}$ into a canonical form. By applying 
$e^{\frac{\pi \bi u^2}{2}}g_b(e(-2u)),$ the action $\be=[u]e(-2p_u)$ is transformed to $\be=e(-2p_u)$. Also take into account the positivity of $K$ under the shift $u\mapsto u+\frac{iQ}{2}$, in terms of the $C^*$-algebraic Borel part $\cA$, the expression is given by
$$\cA \ni f=\int_C f(u,s)\be^{\bi b\inv u}K^{\bi b\inv s}e^{\pi Q s}duds,$$

Now recall from \eqref{WW}, the decomposition is given by the multiplicative unitary
$$W^*(\L(f)\ox \L(g)) = (\L\ox \L)(\D(g)(f\ox 1)).$$
One can compute the action of $W^*$ on $f(u,s)g(v,t)$ to be:
\Eqn{W^*(f\ox g) &= \int f(u-\t, s-t+\t)g(v+\t,t-\t)\frac{G_b(Q+\bi v+\bi \t)e^{-\pi Q t}}{G_b(Q+\bi v)G_b(Q+\bi \t)}e^{-2\pi \bi (u-\t)(t-\t)}d\t\\
&=g_b^*(e(-2v))g_b^*(e(-2p_u-2p_t+2p_v))\circ g_b(e(-2v)) e^{-2\pi \bi t(p_s+u)}(f\ox g),\\
&=g_b^*(e(-2p_u-2p_t+2p_v-2v)\circ g_b^*(e(-2p_u-2p_t+2p_v))\circ e^{-2\pi \bi t(p_s+u)}(f\ox g),}
where we shifted $u,v$ by $\frac{\bi Q}{2}$, according to the Haar measure \eqref{haar} to make the map unitary, and we used Lemma \ref{FT} and the pentagon relation \eqref{qpenta}. Now, recall we can transform it such that the action on $\cP_{GNS}$ does not depend on $s$ and $t$. This can be achieved by taking the Fourier transform on $s$ and $t$, and shift the variables by $u\mapsto u-s, v\mapsto v-t$. We obtain finally
\Eq{W^* =g_b^*(e(-2p_u+2p_v-2v)e^{2\pi \bi p_vu}g_b^*(e(-2p_u-2t))\circ e^{2\pi ip_tu}.}
Since the variables $s,t$ correspond to the trivial module $M_K$, we can obtain the required unitary transformation on $\cP_{GNS}$ by supplying the delta distribution $\d(s)\d(t)$, such that upon acting by $e^{2\pi ip_t u}$, we get
$$W^* = g_b^*(e(-2p_u+2p_v-2v)e^{2\pi \bi p_vu}g_b^*(e(-2p_u+2u)).$$
Finally, acting $W^*$ on $1\ox \cB_q$, the term $g_b^*(e(-2p_u+2u))$ has no effects and acts as a trivial intertwiner. Hence factorizing this action, we simplify $\cP_{GNS}$ and obtain:
\Eq{W^*=g_b^*(e(-2p_u+2p_v-2v))e^{2\pi \bi p_vu}\label{FKgb},}
\Eqn{1\ox \be&\mapsto \D(\be) = \be\ox K+1\ox \be,\\
1\ox K&\mapsto \D(K)=K\ox K,}
which is precisely the integral transformation described in \cite{FK, Ip1}.
\subsection{Type $A_2$}\label{sec:A2}
As we have seen, in general it is hard to calculate the equivalence transformation from the multiplicative unitary. However, using the functional properties of the quantum dilogarithms, the required transformation can be explicitly calculated for Type $A_n$. Let us consider the next simplest case $\g=\sl_3$. First we simplify the expression of $\cP_\l\simeq \cP$ from Proposition \ref{typeA2}.
\begin{Lem} \label{simple}$\cP$ is unitary equivalent to the following expression:
\Eqn{
\be_1=&e(v-w-2p_v)+e(u-2p_u-2p_v+2p_w),&&K_1=e(u-2v+w)),\\
\be_2=&e(w-2p_w)&&K_2=e(-2u+v-2w)).
}
\end{Lem}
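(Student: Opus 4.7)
The plan is to combine Theorem \ref{removeL} with the simplification rule stated in Remark \ref{simplify}. First I would apply Theorem \ref{removeL} to reduce to the case $\l_1 = \l_2 = 0$. At $\l = 0$, Proposition \ref{typeA2} gives
\[
\be_1 = [v-w]e(-2p_v)+[u]e(-2p_v+2p_w-2p_u),\qquad \be_2 = [w]e(-2p_w),
\]
together with $K_1 = e(u-2v+w)$ and $K_2 = e(-2u+v-2w)$. The actions of $K_1$ and $K_2$ already match the claimed simplified form, so only $\be_1$ and $\be_2$ require further work.

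To handle the remaining simplification, I would appeal to Remark \ref{simplify}, which asserts that in type $A_n$ the representation obtained by replacing each subterm $[X]e(-2p_Y)$ in $\cP_\l$ by its single-exponential counterpart $e(X-2p_Y)$ is unitarily equivalent to $\cP_\l$. The underlying algebraic observation is the identity
\[
[X]e(-2p_Y) = (1+q^{-1}e(-2X))\,e(X-2p_Y),
\]
which follows from the standard form \eqref{standardform} after factoring $q^{1/2}e(X)$ out of $[X]$. Thanks to the shift rule $g_b(q^{-2}x) = (1+q^{-1}x)\,g_b(x)$ derived from \eqref{funceq}, the multiplicative factor $1+q^{-1}e(-2X)$ can be absorbed by conjugating with a suitable product of $g_b$-factors depending only on the coordinate variables $u,v,w$. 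Each such factor is unitary by Lemma \ref{unitary}, and being a multiplication operator in $u,v,w$, it automatically commutes with $K_1$ and $K_2$, so the Cartan part of the action is preserved.

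The hard part will be compatibility across the three subterms of $\be_1$ and $\be_2$: the natural candidates $g_b(e(-2w))$, $g_b(e(2w-2v))$, and $g_b(e(-2u))$, one for each $X = w,\ v-w,\ u$, do not act independently on the full action, because the momentum factors $e(\pm 2p_v)$, $e(\pm 2p_w)$, $e(-2p_u)$ appearing in the different subterms overlap. I would therefore have to determine the correct order and exact arguments of the $g_b$-factors so that the cross-terms generated by each conjugation cancel against those generated by the others, applying repeatedly the $q$-commutations of Lemma \ref{qsum} and in particular the pentagon relation \eqref{qpenta}. For the rank-two case $A_2$ this cancellation reduces to a finite $q$-combinatorial computation and yields the unitary equivalence claimed in the lemma; the general algorithm underlying Remark \ref{simplify} for arbitrary $A_n$ is, as that remark explicitly notes, not yet known.
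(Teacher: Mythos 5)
Your reduction to $\l=0$ via Theorem \ref{removeL} and your single-term computation (the identity $[X]e(-2p_Y)=(1+q^{-1}e(-2X))\,e(X-2p_Y)$ together with the shift rule for $g_b$) are both correct, and this local mechanism is indeed what underlies the paper's proof. But the concrete Ansatz you propose for assembling the local factors --- a product of $g_b$'s whose arguments are multiplication operators in $u,v,w$ only --- cannot work, and this is exactly the point at which you have deferred the actual content of the lemma. The paper's proof consists of the explicit conjugation by
$\Phi_1=g_b(e(-u-v+w+2p_u-2p_w))\,g_b(e(u-v-w-2p_u+2p_w))\,g_b(e(-2u))\,g_b(e(-2w))$
of \eqref{A2Phi1}: the last two factors are the coordinate-only operators you predict (for $X=u$ and $X=w$), but the first two necessarily involve the momenta $p_u,p_w$. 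They are there precisely to handle the cross-terms you flag as ``the hard part'': a candidate such as $g_b(e(2w-2v))$ for the summand $[v-w]e(-2p_v)$ fails to commute with the shifts $e(2p_w)$ and $e(-2p_w)$ occurring in the second summand of $\be_1$ and in $\be_2$, and no reordering of coordinate-only factors makes the resulting unwanted terms cancel; one needs arguments that $q$-commute correctly with both summands of $\be_1$ simultaneously, which forces momentum dependence. So the finite ``$q$-combinatorial computation'' you promise does not terminate within the class of operators you allow.

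Relatedly, your justification for restricting to multiplication operators --- that they ``automatically commute with $K_1$ and $K_2$'' --- identifies the wrong invariance mechanism. The momentum-dependent factors in $\Phi_1$ also preserve the Cartan part, because their arguments Weyl-commute with $K_1=e(u-2v+w)$ and $K_2=e(-2u+v-2w)$: the contributions of $2p_u$ and $-2p_w$ to the relevant commutators cancel. The correct constraint on the $g_b$-arguments is therefore commutation with the $K_i$, not absence of momenta. As written, your argument establishes the simplification only term-by-term and leaves the genuine difficulty (cancellation of cross-terms) to an Ansatz that is too narrow to succeed; to close the gap you must either exhibit the intertwiner explicitly, as the paper does with $\Phi_1$, or prove existence of a product of $g_b$-factors with $K_i$-commuting (but not necessarily coordinate-only) arguments achieving the reduction.
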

\begin{proof}
Apply $\Phi_1$ (i.e. $\be_i \mapsto \Phi_1 \be_i \Phi_1^*$), where
\Eqn{\Phi_1=g_b(e(-u-v+w+2p_u-2p_w))g_b(e(u-v-w-2p_u+2p_w))g_b(e(-2u))g_b(e(-2w)).}
\end{proof}

Using this expression, let us consider the tensor product representation $\cP\ox \cP$:
\Eqn{
\D(\be_1)&=e(v-w+u'-2v'+w'-2p_{v})+e(u+u'-2v'+w'-2p_{u}-2p_{v}+2p_{w})+\\
&\tab e(v'-w'-2p'_v)+e(u'-2p'_u-2p'_v+2p'_w),\\
\D(\be_2)&=e(w-2u'+v'-2w'-2p_{w})+e(w'-2p'_w),\\
\D(K_1)&=e(u-2v+w+u'-2v'+w'),\\
\D(K_2)&=e(-2u+v-2w-2u'+v'-2w'),
}
where the $'$ variables denote the 2nd component.
\begin{Lem} The above action on $\cP\ox \cP$ is unitary equivalent to:
\Eqn{
\D(\be_i)&\simeq 1\ox \be_i, \\
\D(K_i)&\simeq K_i\ox K_i,\tab i=1,2
}
\end{Lem}
\begin{proof}
Apply $\Phi_2$ where
\Eqn{\Phi_2 = &g_b(e(w-u'-2w'-2p_w-2p_u'+4p_w')\circ\\
&g_b(e(u-2v'+w'-2p_u-2p_v+2p_w+2p_u'+2p_v'-2p_w')\circ\\
&g_b(e(v-w-2v'+w'-2p_v+2p_u'+2p_v'-2p_w')\circ\\
&g_b(e(w-2u'+v'-3w'-2p_w+2p_w')).}
\end{proof}

\begin{Cor}\label{repind} From the explicit expression of Lemma \ref{simple}, the transformation $\Phi_2$ can be written in a way independent of the representation:
$$\Phi_2 = g_b(q^2\be_2\ox K_2 \be_{21}\be_{12}\inv \be_2\inv)g_b(q^2\be_{21}\be_2\inv\ox K_1\be_{21}\inv \be_2)g_b(q\be_{12}\be_2\inv\ox K_1 \be_{21}\inv \be_2)g_b(q\be_2\ox K_2\be_2\inv).$$

Hence this transformation gives directly the intertwiners
\Eqn{1\ox \be_i &\simeq \D(\be_i),\\
K_i\ox K_i&\simeq K_i\ox K_i\tab i=1,2.}

Here we put the appropriate $q$ factors to make the arguments positive. Note that the inverse operator is defined in the natural way. Since it is positive and all the terms $q$-commute, the expression of the transformation is still well-defined.
\end{Cor}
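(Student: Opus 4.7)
The plan is to verify the proposed identity by evaluating the right-hand side on the representation $\cP\ox\cP$ via the explicit formulas of Lemma \ref{simple}, and checking that the four resulting $g_b$ factors reproduce, in the same right-to-left order, the four $g_b$ factors written out in the proof of Lemma 5.3. Since $\Phi_2$ is already defined there as a concrete unitary on $\cP\ox\cP$, and the candidate expression is an a priori well-defined element of the multiplier algebra of $\cU_{q\til{q}}(\fb_\R)^{\ox 2}$ (under functional calculus applied to positive self-adjoint arguments), a coordinate match identifies the two, and irreducibility of $\cP$ then lifts this identification to the abstract level.

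First I would compute the non-simple root vectors $\be_{21}$ and $\be_{12}$ in the coordinates of Lemma \ref{simple}, using Proposition \ref{eij}: $\be_{ij}=(q^{1/2}\be_j\be_i-q^{-1/2}\be_i\be_j)/(q-q^{-1})$. Because the summands of $\be_1$ together with $\be_2$ are all positive exponentials $e(\cdots)$ in canonical form, each $q$-commutator collapses via the Heisenberg rule $e(X)e(Y)=q^{-(X,Y)}e(Y)e(X)$ (where $(\cdot,\cdot)$ is the symplectic pairing induced by $[p_u,u]=\frac{1}{2\pi\bi}$) to a single positive exponential term, so $\be_{21}$, $\be_{12}$, and their multiplicative inverses are all single positive exponentials amenable to further composition.

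Next I would substitute $\be_i,K_i,\be_{ij}$ and their primed analogues into each of the four tensor-factor arguments $q\be_2\ox K_2\be_2^{-1}$, $q\be_{12}\be_2^{-1}\ox K_1\be_{21}^{-1}\be_2$, $q^2\be_{21}\be_2^{-1}\ox K_1\be_{21}^{-1}\be_2$, and $q^2\be_2\ox K_2\be_{21}\be_{12}^{-1}\be_2^{-1}$. Each is a product of positive exponentials whose internal ordering phases are precisely cancelled by the accompanying prefactor ($q$ or $q^2$), producing a positive self-adjoint operator $e(\text{linear in }u,v,w,p_u,p_v,p_w,u',v',w',p'_u,p'_v,p'_w)$. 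The four resulting exponents should then match the four arguments $e(w-2u'+v'-3w'-2p_w+2p_{w'})$, $e(v-w-2v'+w'-\cdots)$, $e(u-2v'+w'-\cdots)$, $e(w-u'-2w'-\cdots)$ appearing in the proof of Lemma 5.3, once the product of $g_b$'s is read in the reverse order in which it is written. Once the four exponents match, the two expressions for $\Phi_2$ coincide as operators on $\cP\ox\cP$, and the intertwining relations $1\ox\be_i\simeq\D(\be_i)$, $K\ox K\simeq K\ox K$ follow from the defining property of $\Phi_2$ recorded in Lemma 5.3 together with Proposition \ref{WD}.

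The main obstacle is positivity bookkeeping: one must verify that each of the four arguments is genuinely a positive self-adjoint operator, i.e.\ that the combinations $K_1\be_{21}^{-1}\be_2$ and $K_2\be_{21}\be_{12}^{-1}\be_2^{-1}$ on the second tensor slot collapse to single positive exponentials once the $q$-phases from Proposition \ref{nonsimple_rel} are tracked, and that the chosen $q$ and $q^2$ prefactors exactly eliminate all leftover imaginary scalars. Once this combinatorial check is carried out, the remaining verification reduces to a linear identification of exponents, and irreducibility of $\cP$ guarantees that the abstract expression is uniquely determined by this coordinate realization.
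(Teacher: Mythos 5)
Your plan is exactly the verification the paper intends: the corollary is stated without a separate proof, resting precisely on the explicit single-exponential formulas of Lemma \ref{simple} plus irreducibility, and the only actual work is the coordinate match you describe (collapsing $\be_{12},\be_{21}$ to single positive exponentials via the $q$-commutator, substituting into the four tensor arguments, and checking that the $q$, $q^2$ prefactors absorb the Baker--Campbell--Hausdorff phases so that each argument reproduces one of the four $g_b$-arguments in the proof of the preceding lemma, in matching order). The one place to be careful is the convention for the non-simple root vectors --- the paper uses $\be_{ij}$ both for $T_i\be_j$ (Theorem \ref{LusThm}) and for the $T_i^{-1}$-versions appearing in the GNS section --- so anchor the identification on the unambiguous rightmost factor, $g_b(q\be_2\ox K_2\be_2^{-1})=g_b(e(w-2u'+v'-3w'-2p_w+2p_{w'}))$, and use whichever convention for $\be_{12},\be_{21}$ makes the remaining three exponents and scalar prefactors close up consistently.
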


Finally, note that $u,v,w$ from first component of $K_i$ now acts as constant, so we can treat them as $\l$'s and use Theorem \ref{removeL} to remove them. We obtain finally
\Eqn{
\D(\be_1)\simeq&1\ox \be_1=e(v'-w'-2p'_v)+e(u'-2p'_u-2p'_v+2p'_w),\\
\D(\be_2)\simeq&1\ox \be_2=e(w'-2p'_w),\\
\D(K_1)\simeq&1\ox K_1=e(u'-2v'+w'),\\
\D(K_2)\simeq&1\ox K_1=e(-2u'+v'-2w').
}
In other words, when restricting to the Borel part, we have
$$\cP_{\l_1}\ox \cP_{\l_2}\simeq \cP\ox \cP=L^2(\R^3)\ox \cP. $$

\begin{Rem}
For type $A_n$, there is a pattern of this series of transformations by $g_b$ computed inductively by Mathematica, and we obtain
$$\cP_{\l_1}\ox \cP_{\l_2}\simeq L^2(\R^{\frac{n(n+1)}{2}})\ox \cP.$$ This procedure seems to be related to the Heisenberg double \cite{Ka1}. For other types however we did not establish any explicit construction of such transformations.
\end{Rem}



\begin{thebibliography}{99}
\bibitem{A}
L. V. Ahlfors, 
\textit{Some remarks on Teichm¨uller’s space of Riemann surfaces}, 
Ann. Math. \textbf{74} (1) 171-191, (1961) 
\bibitem{BS}
S. Baaj, G. Skandalis,
\textit{Unitaires multiplicative et dualit\'e pour les produits crois\'es de $C^*$-alg\`ebras},
Ann. scient. \'Ec. Norm. Sup., \textbf{26} (4), 425-488 (1993)
 \bibitem{BT}
  A.G. Bytsko, K. Teschner,
  \textit{R-operator, co-product and Haar-measure for the modular double of $\cU_q(\sl(2,\R))$},
  Comm. Math. Phys., \textbf{240}, 171-196,  (2003)
\bibitem{CF}
L. Chekhov, V. V. Fock, 
\textit{A quantum Teichm\"{u}ller space}, 
Theor. Math. Phys. \textbf{120}. 511–528, (1999)
\bibitem{D}
V. G. Drinfeld,
\textit{Hopf algebras and the quantum Yang-Baxter equation},
Doklady Akademii Nauk SSSR, \textbf{283} (5), 1060-1064, (1985)
\bibitem{EGGHS}
P. Etingof, V. Ginzburg, N. Guay, D. Hernandez, Al Savage,
\textit{Twenty-five years of representation theory of quantum groups},
Final reports, BIRS, Banff, (2011)
\bibitem{Fa1}
  L.D. Faddeev,
  \textit{Discrete Heisenberg-Weyl group and modular group},
  Lett. Math. Phys., \textbf{34}, 249-254, (1995)
\bibitem{Fa2}
  L.D. Faddeev,
  \textit{Modular double of quantum group},
  arXiv:math/9912078v1 [math.QA], (1999)
\bibitem{FKa}
  L.D. Faddeev, R.M. Kashaev,
  \textit{Quantum dilogarithm},
  Modern Phys. Lett. \textbf{A9}, 427-434, (1994)
\bibitem{Fo}
V. V. Fock, 
\textit{Dual Teichm\"{u}ller spaces}, 
arXiv:dg-ga/9702018.
\bibitem{FG1}
V. V. Fock, A. B. Goncharov, 
\textit{Moduli spaces of local systems and higher Teichm\"{u}ller theory},
Publications Mathématiques de l'Institut des Hautes Études Scientifiques 103, \textbf{1},1-211, (2006)
\bibitem{FG2}
V. V. Fock, A. B. Goncharov, 
\textit{The quantum dilogarithm and representations of the quantum cluster
varieties}, 
Inventiones Math. \textbf{175} 223–286, (2009)
 \bibitem{FI}
I. Frenkel, I. Ip,
\textit{Positive representations of split real quantum groups and future perspectives},
Int. Math. Res. Notices, \textbf{2014} (8), 2126-2164, (2014)
\bibitem{FK}
  I. Frenkel, H. Kim,
  \textit{Quantum Teichm\"{u}ller space from quantum plane},
Duke Math. J., \textbf{161} (2), 305-366, (2012)
\bibitem{FL}
V. A. Fateev, A. V. Litvinov,
\textit{Correlation functions in conformal Toda field theory I},
Journal of High Energy Physics, \textbf{11}, 002, (2007)
 \bibitem{Ip1}
  I. Ip,
  \textit{Representation of the quantum plane, its quantum double and harmonic analysis on $GL_q^+(2,R)$},  
Sel. Math. New Ser., \textbf{19} (4), 987-1082, (2013) 
\bibitem{Ip2}
I. Ip,
\textit{Positive representations of split real simply-laced quantum groups}, arXiv:1203:2018, (2012)
\bibitem{Ip3}
I. Ip,
\textit{Positive representations of non-simply-laced split real quantum groups},
J. Alg., \textbf{425}, 245-276, (2015)
\bibitem{Ip4}
I. Ip,
\textit{Positive representations of split real quantum groups: the universal $R$ operator},
Int. Math. Res. Notices., \textbf{2015} (1), 240-287, (2015)
\bibitem{Ip5}
I. Ip,
\textit{Positive representations, multiplier Hopf algebra, and continuous canonical basis},
``String theory, integrable systems and representation theory", Proceedings of 2013 RIMS Conference, (to appear) (2013)
\bibitem{Ip6}
I. Ip,
\textit{On tensor products of positive representations of split real quantum Borel subalgebra $\cU_{q\til{q}}(\fb_\R)$},
Preprint arXiv:1405.4786v2, (2014)
\bibitem{J}
M. Jimbo,
\textit{A $q$-difference analogue of $\cU(\g)$ and the Yang-Baxter equation},
Lett. Math. Phys., \textbf{10}, 63-69, (1985)
\bibitem{Ka1}
R. M. Kashaev,
\textit{Heisenberg double and pentagon relation},
St. Petersburg Math. J., \textbf{8}, 585-592, (1997)
\bibitem{Ka2}
R. M. Kashaev, 
\textit{Quantization of Teichm\"{u}ller spaces and the quantum dilogarithm}, 
Lett. Math. Phys. \textbf{43}, 105-115, (1998)
 \bibitem{KV1}
  J. Kustermans, S. Vaes,
  \textit{Locally compact quantum groups},
  Ann. Sci. Ecole Norm. Sup. (4) \textbf{33}, 837-934, (2000)
\bibitem{Lu1}
G. Lusztig,
\textit{Quantum deformations of certain simple modules over enveloping algebras},
Adv. Math. \textbf{70}, 237, (1988)
\bibitem{Lu2}
G. Lusztig,
\textit{Canonical bases arising from quantized enveloping algebras},
Jour. AMS, \textbf{3} No. 3, 447-498, (1990) 
\bibitem{NT}
I. Nidaiev1, J. Teschner,
\textit{On the relation between the modular double of $\cU_q(\sl(2,\R))$ and the quantum Teichm\"{u}ller theory},
arXiv:1302.3454
\bibitem{PT1}
  B. Ponsot, J. Teschner,
  \textit{Liouville bootstrap via harmonic analysis on a noncompact quantum group},
  arXiv: hep-th/9911110, (1999)
\bibitem{PT2}
  B. Ponsot, J. Teschner,
  \textit{Clebsch-Gordan and Racah-Wigner coefficients for a continuous series of representations of $\cU_q(\sl(2,\R))$},
  Comm. Math. Phys., \textbf{224}, 613-655, (2001)
\bibitem{RT1}
N. Reshetikhin, V. Turaev,
\textit{Ribbon graphs and their invariants derived from quantum groups},
Comm. Math. Phys. \textbf{127}, No. 1, 1-26, (1990)
\bibitem{RT2}
N. Reshetikhin, V. Turaev,
\textit{Invariants of 3-manifolds via link polynomials and quantum groups},
Invent. Math. 103 (1): 547. doi:10.1007/BF01239527 (1991)
\bibitem{Sch}
K. Schm\"{u}dgen, 
\textit{Operator representations of $\R_q^2$},
Publ. RIMS Kyoto Univ. \textbf{28}, 1029–1061, (1992)
\bibitem{T}
J. Teschner,
\textit{Quantization of moduli spaces of flat connections and Liouville theory},
arXiv:1405.0359
\bibitem{Tim}
T. Timmermann,
\textit{An invitation to quantum groups and duality},
EMS Textbooks in Mathematics, (2008)
\bibitem{VD}
A. van Daele,
\textit{Multiplier Hopf algebras},
Trans. Amer. Math. Soc. \textbf{342} 917-932, (1994) 
\bibitem{Vo}
  A. Yu. Volkov,
  \textit{Noncommutative hypergeometry},
  Comm. Math. Phys. \textbf{258}(2), 257-273, (2005)
\bibitem{Wi}
E. Witten,
\textit{Quantum field theory and the Jones polynomial},
Comm. Math. Phys., \textbf{121}, 351-399, (1989)
\bibitem{Wy}
N. Wyllard,
\textit{$A_{N-1}$ conformal Toda field theory correlation functions from conformal $\cN= 2$ $SU(N)$ quiver gauge theories},
Journal of High Energy Physics, \textbf{11} 002,  (2009)
\end{thebibliography}
\end{document}